\title{The joint fluctuations of the lengths of the Beta$(2-\alpha, \alpha)$-coalescents
}
\author{Matthias Birkner\thanks{Institut f\"ur Mathematik, Johannes Gutenberg-Universit\"at Mainz, 55128 Mainz, Germany, \hspace{2cm} \mbox{} \mbox{birkner@uni-mainz.de}, idahmer@uni-mainz.de} , Iulia Dahmer$^*$, Christina S. Diehl\thanks{Institut f\"ur Mathematik, Goethe-Universit\"at Frankfurt, 60325 Frankfurt, Germany, \hspace{3cm} \mbox{} \mbox{ch.s.diehl@gmail.com}, kersting@math.uni-frankfurt.de} ,   G\"{o}tz Kersting$^\dag$}
\date{\today}
\newcommand{\E}{\ensuremath{\mathbb{E}}}
\newcommand{\Prob}[1]{\ensuremath{\mathbb{P} \left(#1 \right)}}
\newcommand{\R}{\ensuremath{\mathbb{R}}}
\newcommand{\N}{\ensuremath{\mathbb{N}}}
\renewcommand{\P}{\ensuremath{\mathbb{P}}}
\newcommand{\1}{\mathds{1}}
\newtheorem{theorem}{Theorem}[section]
\newtheorem{lemma}[theorem]{Lemma}
\newtheorem{corollary}[theorem]{Corollary}
\newtheorem{remark}[theorem]{Remark}
\newcommand{\al}{\ensuremath{\alpha}}
\newcommand{\g}{\ensuremath{\gamma}}
\newcommand{\D}{\ensuremath{\Delta}}
\begin{document}
\maketitle

\abstract{ 
We consider Beta$(2-\alpha, \alpha)$-coalescents with parameter range $1 <\alpha<2$ starting from $n$ leaves.  The length $\ell^{(n)}_r$ of order $r$ in the $n$-Beta$(2-\alpha, \alpha)$-coalescent tree is defined as the sum of the lengths of all branches that carry a subtree with $r$ leaves. We show that for any $s \in \mathbb N$ the vector of suitably centered and rescaled lengths of orders $1\le r \le s$ converges in distribution to a multivariate stable distribution as the number of leaves tends to infinity. 
}

\section{Introduction and main result}

Multiple merger coalescents, also known as $\Lambda$-coalescents, are
partition-valued Markov processes in continuous time where in each jump a random number
of classes merges into a single class. They are a natural
generalisation of the famous Kingman coalescent \cite{K82}, where only
pairs of classes merge. $\Lambda$-coalescents were introduced in 1999 by Donnelly and
Kurtz \cite{DK99}, by Pitman \cite{P99} and by Sagitov \cite{S99}, motivated
both by applications to genealogies in stochastic population models and by
their rich mathematical structure. They have since been an object of intense
study, see for example the surveys \cite{B09} and
\cite{GIM14}; we refer also to the recent overview articles \cite{BB19} and \cite{KW20},
which have a focus on their role in mathematical population genetics.

In this article, we consider the subclass where $\Lambda=\mathrm{Beta}(2-\alpha, \alpha)$
is a Beta-measure and $1 < \alpha < 2$.
This class appears naturally as limiting genealogies in certain population models
with infinite offspring variance, see \cite{S03}, and is also closely connected to
continuous state branching processes via a time-change, see \cite{BBCEMSW05}.
It is in a sense prototypical for $\Lambda$-coalescents where $\Lambda$ has a
density with singularity of the form $\sim c x^{1-\alpha}$ at $0+$.

For a sample of size $n$, an $n$-$\mathrm{Beta}(2-\alpha, \alpha)$-coalescent can be visualised as a random tree with $n$
leaves. Functionals of this tree like the total length $L_n$ and also
the internal length $\ell_r^{(n)}$ of order $r$, the length of all
branches subtending exactly $r$ leaves for $r=1,2,\dots,n-1$, are
mathematically interesting and also important in population genetics
applications because of their close relation to the so-called site
frequency spectrum: A mutation which appeared in the part of the
genealogical tree which contributes to $\ell_r^{(n)}$ will be present
in $r$ out of the $n$ samples.  Asymptotic properties of these
functionals have been studied concerning their typical growth rates
(see e.g.\ \cite{BBS07}, \cite{BBL14}), and also some results
concerning their fluctuations have been obtained, see \cite{K12},
\cite{DKW14}, \cite{LT15}, \cite{SJY16}, \cite{DY15}. For more general classes $\Lambda$-coalescents see also results in \cite{DK19a}, \cite{DK19b}, \cite{DIMR07}, \cite{GIMM14}, \cite{M06}. We continue these studies for the case of the Beta-coalescents, extending the
approach in \cite{DKW14} to the analysis of the joint fluctuations of the
random vector $(\ell_1^{(n)}, \ell_2^{(n)}, \dots, \ell_r^{(n)})$ from
$r=1$ to any fixed $r \in \N$, see our main result  Theorem \ref{theorem} below.

We exclude in this study the two boundary cases $\alpha=1$, the so-called Bolthausen-Sznitman coalescent, and
$\alpha=2$, which corresponds to Kingman's coalescent (a result corresponding to our
main result was already obtained in \cite{DK16} for this case). 
\smallskip

Let $(\Pi^{(n)}_t)_{t \ge 0}$ be an $n$-Beta$(2-\alpha, \alpha)$-coalescent with $1<\al<2$, denote by $H_n=\inf\{ t \ge 0 : \# \Pi^{(n)}_t = 1 \}$ the height of the coalescent tree, and for $r \in \N$ 
\begin{equation}
    \ell_r = \ell^{(n)}_r 
    = \int_0^{H_n} \big(\text{number of blocks in $\Pi^{(n)}_t$ with $r$ elements}\big) \ dt.
\end{equation}

Our main result describes the joint fluctuations of these lengths:

\begin{theorem}\label{theorem}
For each $r \in \mathbb N$, as $n \to \infty$
\begin{align*}
\Big( \frac{ c_1 n^{2-\alpha}-\ell^{(n)}_1} {n^{1-\alpha +\frac 1 \alpha}} , \dots&, \frac{c_r n^{2-\alpha}- \ell^{(n)}_r } {n^{1-\alpha +\frac 1 \alpha}}\Big)  
\\
&\stackrel{d}{\longrightarrow}  \Big(\mathcal S_{\frac 1 \gamma}, \int_0^{\frac 1 \gamma } (1-\gamma t)^{\alpha-1} d\mathcal S_t, \dots, \int_0^{\frac 1 \gamma } (1-\gamma t)^{(\alpha-1)(r-1)}{d\mathcal S_t}\Big)\times  R 
\end{align*}
where $\gamma=\frac 1 {\alpha-1}$, $c_1, \dots, c_r$ are constants, $R$ is an $(r\times r)$
upper triangular matrix of constant coefficients and
$\mathcal S=(\mathcal S_t)_{t \geq 0}$ is a stable L\'evy process with index
$\al$ normalized by the properties
\begin{equation}\label{eq:prop_stable}
\mathbb E[\mathcal S_1]=0, \qquad \mathbb P(\mathcal S_1>x) \sim   b_\alpha x^{-\alpha}, \qquad \mathbb P(\mathcal S_1<-x) = o(x^{-\alpha})
\end{equation}
for $x \to \infty$ with $b_\alpha= 1/\Gamma(2-\alpha)$
(See Remark \ref{rem:equiv_char} for equivalent characterisations.) 
\end{theorem}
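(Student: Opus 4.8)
The plan is to follow and extend the approach of \cite{DKW14}, reducing the joint convergence of the vector $(\ell_1^{(n)}, \dots, \ell_r^{(n)})$ to a single functional limit theorem for a suitably chosen martingale built from the block-counting process $X$ and the Poisson-type structure of the jumps. The first step is to control the number of jumps $\tau_n$ and, more precisely, the block-counting process on the natural time scale: writing $X_k = X^{(n)}_k$, one shows that $X_{\lfloor nt \rfloor}/n \to 1 - \gamma t$ (for $t \le 1/\gamma$) in probability, uniformly on compacts, with fluctuations of order $n^{1/\alpha}$ governed by the partial sums of the centered jump sizes $\Delta_k - \E[\Delta_k]$. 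Because the limiting jump law in \eqref{eq:jump law asymp} has a tail $\sim c j^{-1-\alpha}$ with $1<\alpha<2$, these partial sums, rescaled by $n^{1/\alpha}$, converge to the spectrally positive $\alpha$-stable process $\mathcal S = (\mathcal S_t)_{t\ge 0}$ with the normalisation in \eqref{eq:prop_stable}; the one-sided tail condition $\P(\mathcal S_1 < -x) = o(x^{-\alpha})$ reflects that $\Delta_k \ge 1$ is bounded below. This is the engine that produces the $\mathcal S$ appearing in the limit.

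The second step is to express each $\ell_r^{(n)}$ in terms of $X$ and the exponential holding times, and to peel off its deterministic leading term $c_r n^{2-\alpha}$. Here one uses that the number of $r$-blocks $Z_{r,k}$ is, to leading order, a deterministic function of $X_k$: by the results on growth rates (e.g.\ \cite{BBL14}) one has $Z_{r,k} \approx p_r X_k$ on the event that $X_k$ is still macroscopic, where $p_r$ are explicit constants, and $W_k/\lambda_{X_k}$ has mean $\sim (\alpha(2-\alpha)\Gamma(\alpha)) X_k^{-\alpha}$ (using $\lambda_m \sim m^\alpha/(\alpha(2-\alpha)\Gamma(\alpha))$ as $m\to\infty$). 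Substituting $X_k \approx n - \gamma k$ and passing to a Riemann-sum/integral approximation yields $\E[\ell_r^{(n)}] \approx c_r n^{2-\alpha}$ with $c_r = p_r \int_0^{1/\gamma}(1-\gamma t)^{1-\alpha}\,dt$ up to the matrix $R$ bookkeeping. The centered quantity $c_r n^{2-\alpha} - \ell_r^{(n)}$ then decomposes into several contributions: (i) the fluctuation of $X_k$ around its fluid limit, which after a change-of-variables in the integral produces the terms $\int_0^{1/\gamma}(1-\gamma t)^{(\alpha-1)(j-1)}\,d\mathcal S_t$ (the exponent $(\alpha-1)(j-1)$ arising from differentiating $x^{1-\alpha}$ and from the relation between $Z_{r,\cdot}$ and powers of $X$); (ii) the fluctuations of $Z_{r,k}$ around $p_r X_k$, which contribute further Gaussian-type noise that, after rescaling by $n^{1-\alpha+1/\alpha}$, is asymptotically negligible compared to the $\alpha$-stable fluctuations (since $\alpha<2$, the stable fluctuations of size $n^{1-\alpha+1/\alpha}$ dominate the $\sqrt{n}\cdot n^{-\alpha} = n^{1/2-\alpha}$-scale Gaussian fluctuations of the holding-time sums); and (iii) the exponential-holding-time noise $\sum_k Z_{r,k}(W_k - 1)/\lambda_{X_k}$, which is likewise of smaller order. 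One then argues joint convergence of the leading contributions across $r = 1, \dots, s$: they are all continuous linear functionals of the single process $\mathcal S$, so the continuous-mapping theorem applied to the functional convergence of the rescaled $X$-fluctuations delivers the joint stable limit, and the upper-triangular matrix $R$ encodes how the raw functionals of $\mathcal S$ recombine into the stated integrals.

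The main obstacle, as in \cite{DKW14}, is controlling the fluctuations near the \emph{end} of the coalescent, i.e.\ when $X_k$ becomes small ($o(n)$) and the fluid approximation $X_k \approx n-\gamma k$ as well as $Z_{r,k}\approx p_r X_k$ break down; there the chain behaves erratically and the holding rates $\lambda_{X_k}$ are no longer well-approximated. One must show that the total contribution of this terminal phase to $c_r n^{2-\alpha} - \ell_r^{(n)}$ is $o_{\P}(n^{1-\alpha+1/\alpha})$, which requires careful moment bounds on $\sum_{k} Z_{r,k}/\lambda_{X_k}$ restricted to $\{X_k \le \varepsilon n\}$ uniformly in small $\varepsilon$, together with a priori tail bounds on $\tau_n$ and on $X_k$. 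A secondary technical point is establishing the error estimate $Z_{r,k} = p_r X_k + (\text{fluctuation})$ with quantitative control of the fluctuation's contribution — one likely sets up a coupled martingale for $(X_k, Z_{1,k}, \dots, Z_{s,k})$ and applies an $L^2$-bound — and then checking that the only surviving limit is the $\alpha$-stable one driven by $X$. Finally, identifying the constants $c_r$, the entries of $R$, and verifying the precise normalisation constant in \eqref{eq:prop_stable} is a (lengthy but routine) computation with Beta integrals and the asymptotics of $\lambda_{m,k}$.
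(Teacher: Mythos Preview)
Your overall architecture---extract a stable functional limit for the fluctuations of the block-counting chain, show that the holding-time noise and the $Z$-fluctuations are of smaller order, and apply the continuous mapping theorem---matches the paper's strategy in spirit, but there is a genuine gap in how you treat $Z_{r,k}$.

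You propose to approximate $Z_{r,k} \approx p_r X_k$ for a constant $p_r$ and to argue that the fluctuations of $Z_{r,k}$ around $p_r X_k$ contribute only lower-order (``Gaussian-type'') noise. This is not correct at the scale $n^{1-\alpha+1/\alpha}$. What the paper establishes is that the fluctuations of $Z_{r,k}$ around $\E[Z_{r,k}\mid X]$ are negligible (this is the content of the lemma showing $\tilde\ell_r = \bar\ell_r + o_{\mathbb P}(n^{1-\alpha+1/\alpha})$, proved via a martingale/$L^2$ argument close to what you sketch). But $\E[Z_{r,k}\mid X]$ is \emph{not} of the form $p_r X_k$: Lemma~\ref{lm:cond_expectation_Z} gives an explicit formula for $\E[Z_{r,k}\mid X]$ as a sum over compositions $(r_1,\dots,r_m)$ of $r-1$ of products involving the full trajectory of $X$ through $\Pi_0^l = \prod_{i\le l}(1-1/X_i)$ and jump-size indicators $\mathds 1_{\{\Delta_{l_p}=r_p\}}$. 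It is precisely this path-dependence that, after the expansions of Lemma~\ref{lm:DKW_partB} and Lemmas~\ref{lm:L2}--\ref{lm:L1}, produces the $r$-dependent integrands $(1-\gamma t)^{(\alpha-1)(j-1)}$ in the limit. If you only retain $Z_{r,k}\approx p_r X_k$, every coordinate of your centred, rescaled vector becomes asymptotically a \emph{constant multiple of the same} linear functional of $\mathcal S$, so the limiting vector would be supported on a line---contradicting the non-degenerate form stated in the theorem.

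Concretely, the missing ingredients are: (i) the combinatorial computation of $\E[Z_{r,k}\mid X]$ in terms of the path of $X$ (Lemma~\ref{lm:cond_expectation_Z}); (ii) the replacement of the indicators $\mathds 1_{\{\Delta_l=r_p\}}$ by their limiting probabilities $\mathbb P(V=r_p)$ via a martingale argument (Lemma~\ref{lm:indicators}); and (iii) the second-order expansion of $\Pi_0^k$ and $X_k^\beta$ in terms of $\mathcal S^{(n)}$ (Lemma~\ref{lm:DKW_partB}), which converts the path-dependent expression into a sum of Riemann approximations of the stochastic integrals $\int_0^{1/\gamma}(1-\gamma t)^{(\alpha-1)j}\,d\mathcal S_t$. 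Your handling of the terminal phase (splitting at a level $K_n=n/\gamma-n^\delta$) and of the exponential holding-time noise is consistent with the paper, but without steps (i)--(iii) you cannot identify the upper-triangular matrix $R$ or even obtain a non-trivial joint limit.
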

In particular, the random vector $\Big(\mathcal S_{\frac 1 \gamma}, \int_0^{\frac 1 \gamma } (1-\gamma t)^{\alpha-1} d\mathcal S_t, \dots, \int_0^{\frac 1 \gamma } (1-\gamma t)^{(\alpha-1)(r-1)}{d\mathcal S_t}\Big)\times  R $ has a multivariate stable distribution.

The coefficients  \[c_r= \alpha(\alpha-1)^2 \frac{\Gamma(r+\alpha-2)}{r!}\]
were already obtained in \cite[Thm.~9]{BBS07}. In particular we recover their Theorem 9 with a different proof, see also Remark \ref{rm:coefficients} below.

For any $1\leq m< s\leq r$ consider the compositions $(r_1, \dots, r_m) \in \mathbb N^m$ of length $m$ of the number $s-1$, i.e. $\sum_{i=1}^m r_i=s-1$. Denote $
   \widehat{r}_j = s - \sum_{i=1}^j r_i = r_{j+1}+r_{j+2}+\cdots+r_m+1
$ 
(read $\widehat{r}_0 = s, \,\widehat{r}_m=1$).
 In particular for $1\leq p\leq m$, $r_p$ and $\widehat r_p$ depend on $s$ and $m$. Denote also for
$k \in \{1,2,\dots,m\}$
\begin{align}\label{C_k}
& C_k \big((r_1,\dots,r_m)\big)=\frac {\alpha-1 }{s+\frac {2-\alpha} {\alpha-1}} \cdot \prod_{j=1}^{k-1} \frac 1 {\widehat r_{j-1} -\widehat r_{k-1}} 
\\
&\hspace{1cm}\cdot \Big(\frac {(-1)^{m-k+1}}{\prod_{j=k}^{m} (\widehat  r_{k-1}- \widehat r_j)}
 + \frac 1 {s-1}\sum_{q=k}^m (-1)^{q-k} \cdot \frac{s- r_{q}+ \frac {2-\alpha} {\alpha-1}} {\prod_{j=q+1}^m (\widehat r_{j-1}+ \frac {2-\alpha} {\alpha-1} )} \notag
 \cdot  \frac{\widehat r_{k-1} -\widehat r_q} {\prod_{j=k}^q (\widehat r_{j-1}-\widehat r_k)} \Big).\notag
\end{align} 

With this notation, the entries of the matrix $R$ are given as 
\begin{equation}
  \label{R11}
    R_{1,1} = \alpha(\alpha-1)(2-\alpha) \Gamma(\alpha)
\end{equation}
and for $2\leq s\leq r$ and $1<j \le r$ by
\begin{align}
& R_{1, s}= \al^2 (\alpha-1) \Gamma(\al)  \sum_{(r_1, \dots, r_m)}\prod_{p=1}^m \frac{\widehat r_p \cdot \frac {\al}{\Gamma(2-\al)} \frac{\Gamma (r_p+1-\al)}{\Gamma(r_p+2)} }{\widehat r_{p-1} -1},\label{R_1} 
\\
& R_{j, s}= \al \Gamma(\al)  \sum_{(r_1, \dots, r_m)} \bigg(\prod_{p=1}^m \Big(\widehat r_p \frac {\al}{\Gamma(2-\al)} \frac{\Gamma (r_p+1-\al)}{\Gamma(r_p+2)} \Big) \notag
\\
& \hspace{4.5cm}\cdot\sum_{k=1}^m  \Big(\delta_{\widehat r_{k-1},j} C_k \big((r_1,\dots,r_m)\big) \Big)\bigg) \cdot \1_{1\leq j\leq s},
\label{R_j}\end{align}
where the multiple sums run over $m=1, \dots, s-1$ and all compositions $(r_1, \dots, r_m)\in \mathbb N^m$ of $s-1$ as above, $\delta_{i,k}$ denoting the Kronecker symbol.

 Note that by using the functional equation of the gamma function one can see that all the coefficients $R_{j,s}$ are of the form $\Gamma(\alpha)$ times a rational function of $\alpha$.

\begin{remark}\label{rem:equiv_char}
The characteristic function of $\mathcal S_1$ is given by
\[\mathbb E [\exp(i \theta \mathcal S_1)]= \exp\Big(-\sigma^\alpha |\theta|^\alpha \big(1-i \big(\mathrm{sign } \,\theta \big) \tan \frac{\pi\alpha} 2\big) \Big), \quad \theta \in \mathbb R,\]
with $\sigma= \big(-\cos(\frac{\pi \alpha }2)/(\alpha-1)\big)^{1/\alpha}$. In the notation in \cite{ST94} (see in particular Property 1.2.15 therein) the distribution of $\mathcal S_1$ can be equivalently characterised as
\[\mathcal S_1 \sim S_\alpha (\sigma, 1,0),\] 
where $S_\alpha (\sigma, \beta, \mu)$ denotes the stable law with index $\alpha$, scale parameter $\sigma$, skewness parameter $\beta$ and shift parameter $\mu$.
\end{remark}
	\smallskip
	Let 
	\begin{equation}
   L^{(n)}
    = \int_0^{H_n} \big(\text{number of blocks in $\Pi^{(n)}_t$ 
		}\big) \ dt
\end{equation}
	be the total length of the $n$-Beta$(2-\alpha, \alpha)$-coalescent. The following result combines Theorem~\ref{theorem} and the main result from \cite{K12}.
	
	\begin{corollary}\label{corollary}
 For $1<\alpha<\frac{1+\sqrt 5} 2$ it holds that for $r \in \mathbb N$
	as $n \to \infty$
\begin{align*}
\Big( \frac{ c_1 n^{2-\alpha}-\ell^{(n)}_1} {n^{1-\alpha +\frac 1 \alpha}}&, \dots, \frac{c_r n^{2-\alpha}- \ell^{(n)}_r } {n^{1-\alpha +\frac 1 \alpha}}, \frac{ \widetilde c n^{2-\alpha}-L^{(n)}} {n^{1-\alpha +\frac 1 \alpha}} \Big)  
\\
&\stackrel{d}{\longrightarrow}  \Big(\mathcal S_{\frac 1 \gamma}, \int_0^{\frac 1 \gamma } (1-\gamma t)^{\alpha-1} d\mathcal S_t, \dots, \int_0^{\frac 1 \gamma } (1-\gamma t)^{(\alpha-1)(r-1)}{d\mathcal S_t}, \int_{0}^{\frac 1 \gamma} t^{1-\alpha} d \mathcal S_t \Big)\times  \widetilde R 
\end{align*}
	with $\widetilde c=\frac{\Gamma(\alpha) \alpha(\alpha-1)}{(2-\alpha)}$ and $\widetilde R= \begin{pmatrix}
  &  & & 0\\
	& R & & \vdots\\
	&   &  & 0 \\
0 & \hdots & 0 &	\Gamma(\alpha)\alpha(\alpha-1)^{1+\frac 1 \alpha} 
\end{pmatrix}.$
	\end{corollary}
Observe by \cite[Theorem~2.1]{KM88} and formula \eqref{last} below that the integral $\int_{0}^{\frac 1 \gamma} t^{1-\alpha} d \mathcal S_t$  is well-defined when $1<\alpha<\frac {1+\sqrt 5} 2$.

\smallskip
Furthermore, for $1\leq k\leq r$ with $f_k(t)=(1-\gamma t)^{(k-1)(\alpha-1)}\cdot \1_{(0,1/\gamma)}(t)$, and for  $f_{r+1}(t)=t^{1-\alpha}\cdot \1_{(0,1/\gamma)}(t)$ we have
\[I(f_k):=\int_\mathbb R f_k(t) d\mathcal S_t\sim S_{\alpha} \Big(\sigma\Big(\int_\mathbb R (f_k(t))^\alpha  dt\Big)^{1/\alpha}, 1,0\Big),\]
see \cite{ST94} Proposition 3.4.1 (write $I(f_k)=\int_\mathbb R f_k(t) M(dx) $ with random measure $M$ having control measure $m=\sigma^\alpha \cdot \lambda$, $\lambda$ the Lebesgue measure, and skewness intensity $\beta\equiv 1$). By \cite{ST94} Proposition 3.4.2 the limiting multivariate stable distribution from Corollary \ref{corollary} has the  characteristic function
\begin{align*}
&\phi_{f_1,\dots, f_{r+1}}(\theta_1, \dots, \theta_{r+1})=
\\
& \exp \Big\{ -\int_0^{1/\gamma} \Big|\sum_{j=1}^{r+1} \theta_j \big(\sum_{i=1}^j \widetilde R_{ij} f_j(t)\big)\Big|^\alpha 
 \big(1-i\,\mathrm{sign}\big(\sum_{j=1}^{r+1} \theta_j \big(\sum_{i=1}^j \widetilde R_{ij} f_j(t)\big)\big)\big) \cdot \tan\Big(\frac{\alpha\pi} 2\Big)\cdot \sigma^\alpha dt\Big\}.
\end{align*}
 
 Note that by putting $\theta_{r+1}=0$ we cover the limiting distribution from Theorem \ref{theorem} without further restriction on $\alpha$.

\begin{remark}
In view of \cite[Thm.~1,~(iii)]{K12}, we expect the  the following analogue of Corollary~\ref{corollary} for $\frac{1+\sqrt 5} 2<\alpha<2$: For $r \in \mathbb N$
	as $n \to \infty$
\begin{align*}
\Big( \frac{ c_1 n^{2-\alpha}-\ell^{(n)}_1} {n^{1-\alpha +\frac 1 \alpha}}&, \dots, \frac{c_r n^{2-\alpha}- \ell^{(n)}_r } {n^{1-\alpha +\frac 1 \alpha}}, \widetilde c n^{2-\alpha}-L^{(n)}\Big)  
\\
&\stackrel{d}{\longrightarrow}  \Big(\mathcal S_{\frac 1 \gamma}, \int_0^{\frac 1 \gamma } (1-\gamma t)^{\alpha-1} d\mathcal S_t, \dots, \int_0^{\frac 1 \gamma } (1-\gamma t)^{(\alpha-1)(r-1)}{d\mathcal S_t}, \eta \Big)\times  \bar R 
\end{align*}
	with $\eta$ a nondegenerate random variable independent of $\mathcal S$ and $\bar R= \begin{pmatrix}
  &  & & 0\\
	& R & & \vdots\\
	&   &  & 0 \\
0 & \hdots & 0 &	1 
\end{pmatrix}.$
\end{remark}

	\begin{remark}[Special cases]
We give here the exact expressions for the special cases $r=2$ and $r=3$. 
\begin{enumerate}
\item
For the length of order 2 it holds as $n \to \infty$ that
\begin{align*}
 \frac{c_2 n^{2-\alpha}- \ell^{(n)}_2 } {n^{1-\alpha +\frac 1 \alpha}} &\stackrel{d}{\longrightarrow}  R_{1 2}\mathcal S_{\frac 1 \gamma}+ R_{22}\int_0^{\frac 1 \gamma } (1-\gamma t)^{\alpha-1} d\mathcal S_t, 
\end{align*}
with 
\[R_{12}=\frac {\alpha^3(\alpha-1)\Gamma(\alpha)}{2} \quad \text{and} \quad R_{22}= \frac {\alpha(\alpha-1)(2-\alpha)\Gamma(\alpha)}{2}.\]

\item
For the length of order 3 it holds as $n \to \infty$ that
\begin{align*}
 \frac{c_3 n^{2-\alpha}- \ell^{(n)}_3 } {n^{1-\alpha +\frac 1 \alpha}} &\stackrel{d}{\longrightarrow}  R_{1 3}\mathcal S_{\frac 1 \gamma}+ R_{23}\int_0^{\frac 1 \gamma } (1-\gamma t)^{\alpha-1} d\mathcal S_t + R_{33}\int_0^{\frac 1 \gamma } (1-\gamma t)^{2(\alpha-1)} d\mathcal S_t, 
\end{align*}
where 
\begin{align*}
& R_{13}=  \frac {\alpha^3(\alpha-1)(\alpha+1) \Gamma(\alpha)} {6},
\\
& R_{23}= \frac {\alpha^3(\alpha-1)(2-\alpha) \Gamma(\alpha)} {4(2\alpha-1)} ,
\\
& R_{33}= \frac {\alpha^2 (\alpha-1)\Gamma(\alpha)} {12(2\alpha-1)^2}\cdot (5\alpha^3-18\alpha^2+15\alpha-4).
\end{align*}
\end{enumerate}

\end{remark}

\medskip

For a sample of size $n$ out of a population evolving under the infinitely many alleles model and whose genealogy is given by a $\mathrm{Beta}(2-\alpha, \alpha)$-coalescent the site frequency spectrum (SFS) is a widely used statistic which summarises the genetic data. 
Given the coalescent tree, the entries $\xi^{(n)}_1, \dots, \xi^{(n)}_{n-1}$ of the SFS, i.e. the numbers of mutations carried by exactly $1,2,\dots, n-1$ individuals in the sample, are Poisson distributed with parameters $\theta \ell^{(n)}_1, \dots, \theta \ell^{(n)}_{n-1}$, where $\theta$ is the mutation rate. Note from Theorem \ref{theorem} that the Poisson fluctuations of $\xi^{(n)}_r$, $1\leq r\leq n-1$, which are of order $n^{\frac {2-\alpha} 2}$ are dominated by the fluctuations of the internal length (which are of order $n^{1-\alpha+1/\alpha}$) as long as $\alpha <\sqrt 2$. Koskela \cite{K18} proposed a statistical test to distinguish between the Kingman coalescent and $\Lambda$-coalescents based on the two-dimensional summary statistic 
\[\Big(\frac{\xi^{(n)}_{1}}{\sum_{i=1}^{n-1} \xi^{(n)}_{i}}, \, \frac{\sum_{j=k}^{n-1}\xi^{(n)}_{j}}{\sum_{i=1}^{n-1}\xi^{(n)}_{i}}\Big)\]
for some fixed specified $3\leq k\leq n-1$. Our Corollary \ref{corollary} allows one to deduce the asymptotic fluctuations of this statistic which can be read off those of $(\ell^{(n)}_{1},  L^{(n)} - \sum_{j=1}^{k-1} \ell^{(n)}_{j}, L^{(n)})$.

\medskip

The proof of our main result relies on the representation of the length $\ell^{(n)}_{r}$ as the sum of the numbers $Z_{r,k}$ of blocks in the coalescent after $k$ mergers, which contain $r$ elements, multiplied by the inter-coalescence times between the $k$-th and the $(k+1)$-th merger. We show that the exponential inter-coalescence times contribute to the fluctuations of the lengths only through their rates which are essentially given by the number $X_k$ of blocks in the coalescent after $k$ mergers (i.e.\ the state of the block counting process at time $k$) to the power $\alpha$. On the other hand, the numbers $Z_{r,k}$, which encode the sampling randomness from the population, bring in a contribution to the fluctuations of the lengths only through their conditional expectations given the block counting process $X$. Thus, the length can be expressed as a large combinatorial expression involving only the block counting process, see $\bar\ell_r$ in Lemma~\ref{lm:indicators_new}, which is asymptotically re-expressed in terms of multiple integrals in Lemmas~\ref{lm:L2} and \ref{lm:L1}. This expression together with the key observation that the fluctuations of $X_k$ around its typical value are asymptotically stable lead to our result. Section~\ref{sect:Approx} of the paper contains 
preparations which allow
to approximate $\ell^{(n)}_r$ step by step separating the asymptotically negligible contributions, as discussed. These results are combined in Section~\ref{sect:Proof} to complete the proof of Theorem~\ref{theorem} and of Corollary~\ref{corollary}.

In all the proofs we interpret empty sums as 0 and empty products as 1.

\section{Approximations}	
\label{sect:Approx}

\subsection{Preparations}\label{Preparations}

Denote by $\tau_n$ the number of jumps, 
$0 = T_0 < T_1 < \cdots < T_{\tau_n}$ the jump times, and
\begin{equation}
  X_k=X^{(n)}_k = \# \Pi_{T_k}, 
  \quad k=0,1,\dots,\tau_n
\end{equation} 
the number of blocks after $k$ jumps ($X^{(n)}_0=n, X^{(n)}_{\tau_n}=1$, put $X^{(n)}_k=1$ for $k>\tau_n$). These blocks form the (discrete) block-counting process $X=\{X_k\}_{0 \le k \le \tau_n}$. For a $\mathrm{Beta}(2-\alpha, \alpha)$--coalescent this is a decreasing Markov chain with transition probabilities \\$p_{mj}=\binom m{m-j+1} \lambda_{m,m-j+1}/ \lambda_m$, where
\[ \lambda_{m,k}=\frac 1{\Gamma(2-\alpha)\Gamma(\alpha)}\int_0^1 t^{k-\alpha-1}(1-t)^{m-k+\alpha-1}\, dt, \quad k=2, \ldots,m \]
gives the rate at which any fixed $k$-tuple of blocks among $m$ blocks in the coalescent mergers, 
and $\lambda_m=\sum_{k=2}^{m} \binom m k \lambda_{m,k}$.
Let
\begin{equation}
  \label{eq:Delta_k}
\Delta_k=\Delta^{(n)}_k := X^{(n)}_{k-1}-X^{(n)}_{k}, \quad k=1,2,\dots,\tau_n
\end{equation}
be the size of the $k$-th jump of the block counting
process (note that $X$ jumps only downwards, we use the notation
convention \eqref{eq:Delta_k} so that $\Delta_k \ge 1$ for
$k=1,2,\dots,\tau_k$; our definitions imply $\Delta_k = 0$ for
$k > \tau_n$).  Denote $\gamma=1/(\alpha-1)$. Note
\begin{equation}
  \label{eq:jump law asymp}
  \lim_{n\to\infty} \P(\Delta^{(n)}_1 = j )
  = \frac{\al}{\Gamma(2-\al)} \frac{\Gamma(j+1-\al)}{\Gamma(j+2)},
  \quad j =1,2,\dots
\end{equation}
and
\begin{equation}
  \lim_{n\to\infty} \E\big[ \Delta^{(n)}_1\big] = \frac 1 {\alpha-1}=\gamma,
\end{equation}
e.g.\ \cite{DDSJ08}, \cite[Eq.~(4) in Sect.~2]{K12}.
\bigskip

Define
  \begin{equation}\label{eq:S_n}
    \mathcal{S}^{(n)}_t = \frac{n - \gamma \big( \lfloor n t \rfloor \wedge \tau_n \big) - X^{(n)}_{\lfloor n t \rfloor \wedge \tau_n}}{n^{1/\alpha}}
    = \frac{1}{n^{1/\alpha}} \sum_{k=1}^{\lfloor n t \rfloor \wedge \tau_n} (\Delta_k - \gamma), \quad t \ge 0.
  \end{equation}

  \begin{remark}\label{rm:remark}
  \eqref{eq:S_n} yields: 
  \begin{equation}\label{eq:X_k with S_n}
    X_k = n - \gamma k - n^{1/\alpha} \mathcal{S}^{(n)}_{k/n}, \quad k=0,1,\dots, \tau_n.
  \end{equation}
  Applying this with $k=\tau_n$ gives also
  \begin{equation}\label{eq:tau_n via S_n}
    \tau_n = \frac{n-1}{\gamma} - \frac{n^{1/\alpha}}{\gamma} \mathcal{S}^{(n)}_{\tau_n/n}
    = \frac{n}{\gamma} - \frac{n^{1/\alpha}}{\gamma} \mathcal{S}^{(n)}_{1/\gamma} + o_{\P}\big(n^{1/\alpha}\big).
  \end{equation}
  (For the second equality note that the proof of Lemma~\ref{lm:S_n}
  below shows that
  $\mathcal{S}^{(n)}_{1/\gamma} - \mathcal{S}^{(n)}_{\tau_n/n} =
  o_{\P}(1)$.)
  
  We can combine \eqref{eq:X_k with S_n} and \eqref{eq:tau_n via S_n} to find
  \begin{align}
    \label{eq:X_k with S_n alt}
    X_k = 1 + \gamma ( \tau_n -k ) + n^{1/\alpha} \big( \mathcal{S}^{(n)}_{\tau_n/n} - \mathcal{S}^{(n)}_{k/n}\big).
  \end{align}
  
  In particular, we have for $n \to \infty$ that
\begin{align}\label{lm:tau}
\frac {\tau_n- n/\gamma}{n^{1/\alpha}} \mathop{\longrightarrow}^d -\frac {\mathcal S_{1/\gamma}}{\gamma}.\end{align}
Note that \eqref{lm:tau} was already obtained in \cite{GY07} and \cite{DDSJ08}.
\end{remark}

Consider now a random variable $V$ 
with values in $\mathbb N$ and distribution 
\begin{equation}\label{eq:distrib_V}
\mathbb P(V=k)=\frac {\al}{\Gamma(2-\al)} \frac{\Gamma (k+1-\al)}{\Gamma(k+2)}, \quad k\geq 1.
\end{equation}
According to \cite{K12} one can couple the jumps $\Delta_j$ of the block counting process with a sequence $V_1, V_2, \dots$ of independent copies of the random variable $V$ such that (see \cite[(6) and Lemma~3]{K12}) for $j<\tau_n$ we have $\Delta_j \leq V_j$ a.s. and there is a $c>0$ such that
\begin{align}\label{couplingK12_1}
\mathbb P(V\ge k)\le ck^{-\alpha}, \qquad \mathbb P(V_1\neq \Delta_1 | X_0=m) \le cm^{-1}, \quad m \in  \mathbb N
\end{align}
and
\begin{align}\label{couplingK12_2}
\mathbb P(V_1\ge k |V_1\neq \Delta_1) \le ck^{1-\alpha}.
\end{align}

We will make use of this coupling several times in our proofs. For $j \ge \tau_n$, we simply choose $V_j$ as
independent copies of $V$ from \eqref{eq:distrib_V}, independent of everything else.

\begin{lemma}\label{lm:S_n}
  We have 
  \begin{align}
    \big( \mathcal{S}^{(n)}_t )_{t \ge 0}
    \mathop{\Longrightarrow}_{n\to\infty} (\mathcal{S}_{t \wedge (1/\gamma)})_{t \ge 0}
  \end{align}
  (convergence in distribution on the space of c\`adl\`ag paths equipped with the Skorohod \mbox{($J_1$-)} topology)
  where $\mathcal{S}$ is a centered stable L\'evy process of index $\alpha$ which is (in the parametrisation
  from \eqref{eq:S_n}) maximally skewed to the right, as in Theorem \ref{theorem}.

  In particular, the families
  $\big\{ \sup_{t\ge 0} |\mathcal{S}^{(n)}_t| \big\}_{n \in \N}$ and
  $\big\{ \max_{0 \le k \le \tau_n} |X_k^{(n)}-(n-\gamma
  k)|/n^{1/\alpha} \big\}_{n \in \N}$ are tight.
  \end{lemma}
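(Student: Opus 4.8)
The plan is to establish the functional limit theorem by a martingale/Markov-chain argument for the partial-sum process $\mathcal S^{(n)}$, and then derive tightness of the two supremum families as an easy corollary. First I would observe that, away from the absorbing region, the increments $\Delta_k - \gamma$ behave like i.i.d.\ copies of $\Delta_1 - \gamma$ in the $n\to\infty$ limit: by \eqref{eq:jump law asymp}, the limiting jump law has tail $\P(\Delta_1^{(n)} = j) \to \frac{\alpha}{\Gamma(2-\alpha)}\frac{\Gamma(j+1-\alpha)}{\Gamma(j+2)} \sim c\, j^{-1-\alpha}$, so the centred increments are in the domain of attraction of a spectrally positive $\alpha$-stable law with the scaling $n^{1/\alpha}$, which is exactly the normalisation in \eqref{eq:S_n}. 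The heuristic limit is therefore $\mathcal S$, a centred $\alpha$-stable process maximally skewed to the right, run until the block-counting process is absorbed; since $X^{(n)}_k \approx n - \gamma k$, absorption at $X=1$ happens around $k \approx n/\gamma$, i.e.\ at time $t = 1/\gamma$ on the rescaled clock, which explains the time-change $t \wedge (1/\gamma)$.

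For the rigorous argument I would proceed in three steps. (i) \emph{Finite-dimensional convergence.} Condition on the state $X^{(n)}_{k-1} = m$; the law of $\Delta_k$ is that of the first jump of a Beta$(2-\alpha,\alpha)$-coalescent started from $m$ individuals, and one shows that for $m$ of order $n^{1-\varepsilon}$ up to $n$ the distribution of $\Delta_k$ is close (in total variation, uniformly) to the limiting law in \eqref{eq:jump law asymp}, with error controlled by the block-counting dynamics. Using the strong Markov property, a truncation of large jumps, and the classical triangular-array criterion for convergence to a stable law (Lindeberg-type condition on the truncated part plus convergence of the truncated-jump intensity measure), one gets convergence of $(\mathcal S^{(n)}_{t_1}, \dots, \mathcal S^{(n)}_{t_j})$ to the corresponding marginals of $(\mathcal S_{t\wedge 1/\gamma})$. (ii) \emph{Tightness in Skorokhod space.} Here I would use an Aldous-type criterion, or more directly a maximal/moment inequality for the martingale part $M^{(n)}_t = n^{-1/\alpha}\sum_{k \le \lfloor nt\rfloor \wedge \tau_n}(\Delta_k - \E[\Delta_k \mid \mathcal F_{k-1}])$ together with a bound showing the compensator $n^{-1/\alpha}\sum_k (\E[\Delta_k\mid\mathcal F_{k-1}] - \gamma)$ is negligible — this uses that $\E[\Delta^{(n)}_1] \to \gamma$ and, more quantitatively, that $\E[\Delta_k\mid X_{k-1}=m] - \gamma = O(m^{-(\alpha-1)})$ or so, summable after rescaling. (iii) \emph{Identification of the limit} via the characteristic exponent computed in step (i), matching the maximal skewness and the index $\alpha$; the precise norming constant in \eqref{eq:S_n} is inherited from the tail constant in \eqref{eq:jump law asymp}.

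For the ``in particular'' statement: once $(\mathcal S^{(n)}_t)_{t\ge 0} \Rightarrow (\mathcal S_{t\wedge 1/\gamma})_{t\ge 0}$ in the Skorokhod topology, the map $x \mapsto \sup_{t\ge 0}|x(t)|$ is continuous at paths with no jump at infinity (and the limit process is eventually constant), so $\sup_{t\ge 0}|\mathcal S^{(n)}_t| \Rightarrow \sup_{t\le 1/\gamma}|\mathcal S_t|$, which is an a.s.\ finite random variable; convergence in distribution to a finite limit gives tightness of $\{\sup_{t\ge 0}|\mathcal S^{(n)}_t|\}_{n}$. For the second family, note that $\mathcal S^{(n)}_{k/n} = (n - \gamma k - X^{(n)}_k)/n^{1/\alpha}$ for $k \le \tau_n$, so $\max_{0\le k\le\tau_n}|X^{(n)}_k - (n-\gamma k)|/n^{1/\alpha} \le \sup_{t\ge 0}|\mathcal S^{(n)}_t|$ (up to the trivial contribution at $k=\tau_n$, controlled since $\Delta_{\tau_n} = O(1)$ in probability by \eqref{eq:jump law asymp}), and tightness is inherited. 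The main obstacle is step (i)–(ii): making precise and quantitative the statement that the conditional jump distribution $\mathcal L(\Delta_k \mid X_{k-1}=m)$ converges to the limit law \eqref{eq:jump law asymp} uniformly enough in $m$ (down to $m$ small, or with a cutoff showing the contribution of small $m$ to the sum is negligible), and controlling the compensator — this is where the specific structure of the Beta$(2-\alpha,\alpha)$ merging rates $\lambda_{m,k}$ enters, and it is essentially the technical heart that the rest of the paper will also rely on.
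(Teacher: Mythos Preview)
Your outline is a viable route, but it differs from the paper's approach. The paper's proof is a single sentence pointing to the coupling argument in \cite{K12}: one constructs i.i.d.\ copies $V_1, V_2, \dots$ of the limiting jump law \eqref{eq:jump law asymp} on the same probability space so that $\P(\Delta_k \neq V_k \mid X_{k-1}=s) \le C/s$ (this is Lemma~3 in \cite{K12}, and the same coupling reappears later in the present paper just before Lemma~\ref{lm:indicators}). Once this is in place, the functional limit for the i.i.d.\ partial sums $n^{-1/\alpha}\sum_{k\le \lfloor nt\rfloor}(V_k-\gamma)$ is the classical Skorokhod invariance principle for sums in the domain of attraction of a stable law, and the coupling bound shows the difference $n^{-1/\alpha}\sum_k(\Delta_k - V_k)$ is $o_\P(1)$ uniformly in $k\le \tau_n$. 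The stopping at $t=1/\gamma$ then comes out for free from $\tau_n/n \to 1/\gamma$.

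What this buys compared to your plan: the coupling reduces everything to an i.i.d.\ functional CLT, so there is no separate tightness step (Aldous, maximal inequalities) and no need to split off and estimate a compensator $\sum_k(\E[\Delta_k\mid X_{k-1}]-\gamma)$; both of these are absorbed into the single total-variation bound per step. Your triangular-array/martingale route would work too, but the bookkeeping you flag at the end---uniform closeness of $\mathcal L(\Delta_k\mid X_{k-1}=m)$ to the limit and control near absorption---is exactly what the coupling inequality packages in one line. Your derivation of the ``in particular'' claims from the functional convergence is fine; note in fact that $|X_k-(n-\gamma k)|/n^{1/\alpha}=|\mathcal S^{(n)}_{k/n}|$ exactly for $k\le\tau_n$, so the second tightness statement is immediate from the first without any extra argument about $\Delta_{\tau_n}$.
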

	
 \begin{proof}
Let $(V_k)_{k\ge1}$ be the coupling to $(\Delta_k)_{k \ge 1}$ from above and 
$M:= \sup_{k<\tau_n} | \sum_{j=1}^k(V_j-\Delta_j)|$. Since $\Delta_j \le V_j$ a.s. for $j<\tau_n$, we have
\begin{align*}
 \mathbb E[M] &= \mathbb E[\sum_{j< \tau_n} (V_j-\Delta_j) ] = \mathbb E[\sum_{j=0}^n (V_j-\Delta_j)\1_{X_j>1}] 
\\
&=\sum_{j=0}^n \mathbb E[(V_j-\Delta_j) \1_{X_j>1}] =\sum_{j=0}^n \mathbb E_n[\mathbb E[V_j-\Delta_j|X_j];X_j>1]
\end{align*}
Moreover for $m \in \mathbb N$
\begin{align}
\mathbb E[V_1-\Delta_1 | X_0=m]&\le \mathbb E[V; V \ge m/2] + \mathbb E[V \1_{V\le m/2, V_1\neq \Delta_1} | X_0=m] \notag
\\
&\le \sum_{k \ge m/2} \mathbb P(V\ge k) + \frac m2 \mathbb P(V\ge \frac m2) \notag \\
& \quad + \sum_{k\le m/2} \mathbb P(V_1\ge k |V_1\neq \Delta_1) \mathbb P(V_1\neq \Delta_1 | X_0=m)
\end{align}
From  \eqref{couplingK12_1} and \eqref{couplingK12_2} we obtain for $c'>0$
\[ \mathbb E[V_1-\Delta_1 | X_0=m] \le c' m^{1-\alpha} \]
and
\begin{equation*}
  \mathbb E[M] \le c' \mathbb E[\sum_{j=0}^nX_j^{1-\alpha} ; X_j >1] \le c' \sum_{k=1}^n k^{1-\alpha}=O(n^{2-\alpha})=o(n^{1/\alpha})
\end{equation*}
Therefore
\[ \mathcal S_t^{(n)} =\frac 1{n^{1/\alpha}}\sum_{k=1}^{[nt]\wedge \tau_n}(V_k- \gamma) + o_{\mathbb P}(1) , \]
where the error term applies uniformly in $t\ge 0$. 

Next recall that for any $1/\alpha <\delta< 1$ we have $\tau_n-n/\gamma =o_{\mathbb P}(n^\delta)$
(this follows from \cite[Thm.~7]{GY07} or \cite[Prop.~3.1]{DDSJ08}). On the event $\{\tau_n>\gamma^{-1}n - n^\delta\}$  the summands with $k\le  \gamma^{-1}n-n^{\delta}$ cancel within the next supremum. We obtain
\begin{align*} \sup_t\Big| \sum_{k=1}^{[nt]\wedge \tau_n}&(V_k- \gamma) - \sum_{k=1}^{[nt]\wedge \gamma^{-1}n}(V_k- \gamma)\Big| \\&\le 2 \sup_{ 0 < k \le 2n^\delta} \Big| \sum_{j=1}^k (V_{[\gamma^{-1}n-n^\delta]+j}-\gamma)\Big| + o_{\mathbb P}(n^{1/\alpha}) .
\end{align*}
The right-hand supremum is of order $O_{\mathbb P}((n^\delta)^{1/\alpha})= o_{\mathbb P}(n^{1/\alpha})$ yielding
\begin{align}\label{SandVs}
\mathcal S_t^{(n)}  = \frac 1{n^{1/\alpha}}\sum_{k=1}^{[nt]\wedge \gamma^{-1}n}(V_k- \gamma) + o_{\mathbb P}(1) , 
\end{align}
where again the error term  applies uniformly in $t$. Now our claim follows by means of  the functional limit theorem for stable distributions.

 For the normalization of the limit law note that $n^{-1/\alpha} \sum_{k=1}^n (V_k-\gamma) \Rightarrow \mathcal{S}_1$, thus $\mathcal{S}_1$ is stable with index $\alpha$, centered and maximally skewed to the right. Furthermore, \begin{equation}
 \label{S1tail}
     \mathbb{P}(\mathcal{S}_1 > x) \sim (\Gamma(2-\alpha))^{-1} x^{-\alpha}
     \quad \text{for } x \to \infty
 \end{equation} compare \cite[Eq.\ (3)]{K12} and \cite[Proof of Lemma~10, p.~2100, line 8 from below]{K12}.
 \end{proof}

For $r=1,\dots,n$ let 
\[ 
Z_{r,k} = \# \{ B \in \Pi_{T_k} : |B|=r\}, \quad k=0,1,\dots,\tau_n
\]
be the number of blocks of size $r$ after $k$ jumps 
($Z_{r,0} = n \delta_{r,1}$, $Z_{r,\tau_n} = \delta_{r,n}$). In particular this is the number of branches of order $r$ (i.e., branches subtending exactly $r$ leaves) in the coalescent tree which exist after the $k$-th jump.

We will use the following stochastic representation of $\ell_r$ (and write - for convenience - in what follows equality instead of equality in distribution):
\[\ell_r^{(n)} \stackrel{d}= \sum_{k=0}^{\tau_n-1} Z_{r,k} W_k /\lambda_{X_k}, \]
where $W_0, W_1, \dots$ are i.i.d. standard exponential distributed random variables and independent of the block counting process $X$ and $\lambda_m=\binom m 2 \lambda_{m,2}+ \cdots + \binom m m \lambda_{m,m}$ denotes for $m \geq 2$ the rate at which a coalescence happens when there are $m$ lineages in the tree.

From Lemma 2.2 in \cite{DDSJ08} we have that as $m \to \infty$
\begin{equation}
\label{eq:lambda_m}
    \lambda_m=\frac 1 {\al\Gamma(\al)}m^\al + O(m^{\al-1}).
\end{equation}

We will make the following approximations (we leave out the superscript $(n)$ for the lengths from now on): 
\begin{align}\label{eq:approximations}
\ell_r = \sum_{k=0}^{\tau_n-1} \frac{ Z_{r,k} }{ \lambda_{X_k}}  W_k  & \approx \sum_{k=0}^{\tau_n-1} \frac{ Z_{r,k} }{\lambda_{X_k}} \approx  \sum_{k=0}^{\tau_n-1} \frac{ \E[Z_{r,k} | X] }{\lambda_{X_k}}
 \approx 
\al\Gamma(\al) \sum_{k=0}^{\tau_n-1} \frac{ \E[Z_{r,k} | X] }{ X_k^\al} .
 \end{align}
Note that by an argument  analogous  to the one in \cite[p.~1024--1025]{DKW14} one has 
\begin{align*}
\ell_r =  \sum_{k=0}^{\tau_n-1} \frac{ Z_{r,k} }{\lambda_{X_k}} + o_\mathbb P (n^{1-\alpha +1/\alpha}),
\end{align*}
which justifies the first approximation. It remains to show the last two. Essentially this means that in our context fluctuations of the block counting process are dominant.

\medskip
Let $[n]=\{1,2, \dots, n\}$. For $0\leq j< k< \tau_n$ and $r \in [n]$ let 
\begin{align}\label{eq:Pi}\Pi_j^k(r):=\prod_{i=j+1}^k \Big(1-\frac r {X_i}\Big)
\end{align}
and $\Pi_j^k=\Pi_j^k(1)$. We use this notation that was already introduced in \cite{DKW14} for consistency. Note in particular that the upper index is not a power, but just an index. Whenever we deal with a power we will write e.g. $(\Pi_j^k)^r$. The following lemma follows directly from results in \cite{DKW14}.

\begin{lemma}\label{lm:DKW}
Let $\varepsilon>0$. It holds uniformly for $0\leq j< k< \tau_n$:\\ 
$\text{i) }$ For $\beta \in \mathbb R$
\begin{align*}
& X_k^{\beta}=\Big(\gamma (\tau_n-k)\Big)^{\beta} \Big(1+
O_{\mathbb P} \Big((\tau_n-k)^{1/\alpha -1 +\varepsilon}\Big)\Big).
\end{align*}
\text{ii) }
\begin{align*}
&  \Pi_j^k=\Big(\frac {\tau_n-k}{\tau_n-j}\Big)^{\alpha-1} \Big(1+
O_{\mathbb P} \Big((\tau_n-k)^{1/\alpha -1 +\varepsilon}\Big)\Big).
\end{align*}
\text{iii)}
\begin{align*}
\Pi_j^k(r) = \big(\Pi_j^k\big)^r \bigg(1 + O_{\mathbb P}\Big(\frac 1 {X_k}\Big) \bigg) \quad \text{and} \quad \Pi_j^{k-1}(r) = \big(\Pi_j^k\big)^r \bigg(1 + O_{\mathbb P}\Big(\frac 1 {X_k}\Big) \bigg).
\end{align*}
\end{lemma}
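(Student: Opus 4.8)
The plan is to reduce all three parts to one uniform first-moment estimate for the block-counting process read from the end of the coalescent, namely
\[
  X_k=\gamma(\tau_n-k)\Big(1+O_{\mathbb P}\big((\tau_n-k)^{1/\alpha-1+\varepsilon}\big)\Big)\qquad\text{uniformly in }0\le k<\tau_n,
\]
for every $\varepsilon>0$, which is exactly the kind of bound supplied by \cite{DKW14}. Heuristically: writing the chain backwards, $Y_\ell:=X_{\tau_n-\ell}$ starts at $Y_0=X_{\tau_n}=1$ and has increments $Y_\ell-Y_{\ell-1}=\Delta_{\tau_n-\ell+1}$ which, away from $\ell\approx\tau_n$, are close to i.i.d.\ copies of the limit law \eqref{eq:jump law asymp} with mean $\gamma$ and tail index $\alpha$, so a maximal inequality for these partial sums yields the display above (one may alternatively combine Lemma~\ref{lm:S_n} with \eqref{eq:X_k with S_n}, \eqref{eq:tau_n via S_n} and a modulus-of-continuity estimate for $\mathcal S^{(n)}$). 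Part i) for general $\beta$ then follows from the case $\beta=1$ by a first-order Taylor expansion of $x\mapsto x^\beta$ about $\gamma(\tau_n-k)$, valid once the relative error $(\tau_n-k)^{1/\alpha-1+\varepsilon}$ is small; the bounded (tight) number of indices $k$ with $\tau_n-k=O_{\mathbb P}(1)$ are handled directly, using that there $X_k$ is itself tight and bounded below by $2$.

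For part ii) I would take logarithms, $\log\Pi_j^k=\sum_{i=j+1}^k\log(1-1/X_i)=-\sum_{i=j+1}^k\big(1/X_i+O(1/X_i^2)\big)$, substitute $X_i=\gamma(\tau_n-i)(1+O_{\mathbb P}((\tau_n-i)^{1/\alpha-1+\varepsilon}))$ from i), and compare the resulting harmonic-type sum with a logarithm to obtain
\[
  \sum_{i=j+1}^k\frac1{X_i}=(\alpha-1)\log\frac{\tau_n-j}{\tau_n-k}+O_{\mathbb P}\big((\tau_n-k)^{1/\alpha-1+\varepsilon}\big),
\]
the error collecting the replacement error $\sum_i|1/X_i-1/(\gamma(\tau_n-i))|=\sum_iO_{\mathbb P}((\tau_n-i)^{1/\alpha-2+\varepsilon})$ and the quadratic remainder $\sum_iO_{\mathbb P}(1/X_i^2)=O_{\mathbb P}(1/(\tau_n-k))$; both are summable from the top (this uses $1/\alpha<1$) and of the stated order. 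Exponentiating turns the additive error in the logarithm into the multiplicative factor $1+O_{\mathbb P}((\tau_n-k)^{1/\alpha-1+\varepsilon})$, which is ii).

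For part iii) I would subtract the two logarithmic series, $\log\Pi_j^k(r)-r\log\Pi_j^k=\sum_{i=j+1}^k\big(\log(1-r/X_i)-r\log(1-1/X_i)\big)$; the linear $1/X_i$-terms cancel, leaving $-\tfrac{r(r-1)}{2}\sum_{i=j+1}^k 1/X_i^2+O\big(\sum_i 1/X_i^3\big)=O_{\mathbb P}(1/X_k)$ by i) and $\sum_{m\ge\tau_n-k}m^{-2}=O(1/(\tau_n-k))$. Exponentiating gives the first identity of iii), and the second follows on dividing out the single factor $1-r/X_k=1+O_{\mathbb P}(1/X_k)$ that separates $\Pi_j^{k-1}(r)$ from $\Pi_j^k(r)$; the finitely many steps with $X_k$ of order $1$ are again treated separately, where both sides are positive and tight so the claim is merely an $O_{\mathbb P}(1)$-statement.

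The main obstacle is the uniformity over all $0\le j<k<\tau_n$: promoting a fluctuation bound valid at a fixed index to one holding simultaneously over the whole range. This requires a genuine maximal inequality for partial sums with an $\alpha$-stable, one-sidedly heavy tail, together with quantitative control of the mild dependence among the reversed increments $\Delta_{\tau_n-\ell+1}$ and of their deviation from the limit law \eqref{eq:jump law asymp} near the boundary $\ell\approx\tau_n$ — precisely the technical input of \cite{DKW14}, which is why I would quote it rather than reprove it here. Once that uniform estimate is in place, the passages to i)--iii) are routine Taylor expansions and comparisons of sums with integrals, as sketched above.
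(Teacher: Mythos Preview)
Your proposal is correct and essentially matches the paper's proof: part i) is obtained from the uniform estimate of \cite{DKW14} (their Lemma~3.3) plus a Taylor expansion, and part ii) is their Lemma~3.4, which you effectively re-derive from i) by taking logarithms and comparing with a harmonic sum. For part iii) there is a small stylistic difference worth noting: you pass to logarithms so that the first-order $1/X_i$ terms cancel and the remainder is $O_{\mathbb P}\big(\sum_i 1/X_i^2\big)=O_{\mathbb P}(1/X_k)$, whereas the paper bounds $\big|(\Pi_j^k)^r-\Pi_j^k(r)\big|$ directly via the telescoping product inequality $\big|\prod a_i-\prod b_i\big|\le\sum a_1\cdots a_{i-1}|a_i-b_i|b_{i+1}\cdots b_k$ with $a_i=(1-1/X_i)^r$, $b_i=1-r/X_i$, arriving at the same $O_{\mathbb P}(1/X_k)$ error. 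Your logarithmic route makes the cancellation transparent but needs the side remark that $r/X_i<1$, while the paper's multiplicative bound avoids logarithms entirely; otherwise the arguments coincide.
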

Uniformly in $0\le j < k < \tau_n$ is to be understood for example in i) in the sense that the family $\max_{0 \le k < \tau_n} |X_k^\beta-\big(\gamma(\tau_n-k)\big)^\beta|/(\tau_n-k)^{\beta +1/\alpha-1+\varepsilon}$ is tight and similarly for ii) and iii). In the sequel uniformity will continue to hold for all $O_{\mathbb P}$-statements depending on a parameter other than $n$. We will not point to it everywhere. 

\begin{proof}
Note that for $k \geq \tau_n - r$, the term in \eqref{eq:Pi} can be negative. However, since for such $k$ we have $1 \le X_k \le r$, the claims of the lemma then hold trivially. 
\medskip

 i) follows from Lemma 3.3 from \cite {DKW14}  and a Taylor expansion. In particular the argument uses the fact that $\frac {\tau_n} n$ converges in probability to $\frac 1 \gamma$, which is a consequence of \eqref{lm:tau}. The  equality in ii) is contained in Lemma 3.4 in \cite {DKW14}. For iii) note that using the fact that for $a_1, \dots, a_k, b_1, \dots, b_k$ positive
\begin{align*}
\Big|\prod_{i=1}^k a_i-\prod_{i=1}^k b_i \Big| &\leq \sum_{i=1}^k a_1\cdots a_{i-1} |a_i-b_i|b_{i+1}\cdots b_{k},
\end{align*}
it holds for $k < \tau_n-r$  
\begin{align*}
\Big|(\Pi_j^k)^r - \Pi_j^k(r)\Big| &\leq \sum_{i=j+1}^k \prod_{l=j+1}^{i-1}\Big(1-\frac 1 {X_l}\Big)^r\cdot\Big|\Big(1-\frac 1 {X_i}\Big)^r-\Big(1-\frac r {X_i}\Big)\Big| \cdot \prod_{l=i+1}^k\Big(1-\frac r {X_{l}}\Big)\\
& \leq \sum_{i=j+1}^k (\Pi_j^k)^r\cdot \Big(\frac{r+1}{r}\Big)^r \Big|\Big(1-\frac 1 {X_i}\Big)^r-\Big(1-\frac r {X_i}\Big)\Big|\\
& \leq (\Pi_j^k)^r \sum_{i=j+1}^k c\frac 1 {X_i^2}\\
& =O_{\mathbb P}\Big((\Pi_j^k)^r\cdot \frac 1 {X_k}\Big),
\end{align*}
where the last equality follows from the fact that
\begin{align}\label{sum_squares}
\sum_{i=1}^k \frac 1{X_i^2} \le \sum_{m\ge X_k} \frac 1{m^2} = O\Big(\frac 1{X_k} \Big). 
\end{align}
For the second equality in  iii) note that 
\[ \Pi_j^{k-1}(r) = \big(\Pi_j^{k-1}\big)^r \bigg(1 + O_{\mathbb P}\Big(\frac 1 {X_{k-1}}\Big) \bigg)= \big(\Pi_j^{k}\big)^r \bigg(1 + O_{\mathbb P}\Big(\frac 1 {X_{k-1}}\Big) \bigg)\Big(1+\frac 1 {X_{k}-1}\Big)^r\]
and since $\frac 1 {X_{k-1}}=O_{\mathbb P}\Big(\frac 1 {X_{k}}\Big)$  the claim follows.
\end{proof}

\begin{lemma}\label{lm:DKW_partB}
$\text{i) }$ 
For $0\leq k< \tau_n$
\begin{align*}
&  X_k = n - \gamma k - n^{1/\alpha} \mathcal{S}^{(n)}_{k/n}. 
\end{align*}
$\text{ii) }$ For $\beta \in \mathbb R$ it holds uniformly for $0\leq k< K_n$ with $K_n= \lfloor\frac n\gamma - n^{\delta}\rfloor$, $ 1/\alpha< \delta < 1$
\begin{align*}
X_k^{\beta} & = (n-\gamma k)^{\beta} \Big( 1 - \beta \frac{ n^{1/\alpha} \mathcal S^{(n)}_{k/n}}{n-\gamma k}
  + O_{\mathbb P}\Big(  \frac{n^{2/\alpha}}{(n-\gamma k)^{2} }\Big) \Big).
\end{align*}
\text{iii) } It holds uniformly for $0\leq j< k< K_n$ 
\[\Pi_0^k=\Big(\frac{n-\gamma k}{n}\Big)^{\frac 1 \gamma} \Big(1-(\alpha -1)\frac{n^{1/\alpha} \mathcal S_{k/n}^{(n)} }{n-\gamma k} + (\alpha -1)\sum_{j=1}^k \frac{\Delta_j - \gamma }{n-\gamma j}+ O_{\mathbb P}\Big(\frac {n^{2/\alpha}}{(n-\gamma k)^2}\Big)\Big). \]
\end{lemma}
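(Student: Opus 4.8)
The plan is as follows. Part~i) needs no argument beyond unwinding the definition~\eqref{eq:S_n} of $\mathcal S^{(n)}$: for $k<\tau_n$ one has $\lfloor nt\rfloor\wedge\tau_n=k$ at $t=k/n$, so the asserted identity is exactly~\eqref{eq:S_n} solved for $X^{(n)}_k$, and being a deterministic identity it holds simultaneously in $k$. For part~ii) I would substitute~i) to write $X_k^\beta=(n-\gamma k)^\beta(1-n^{1/\alpha}\mathcal S^{(n)}_{k/n}/(n-\gamma k))^\beta$ and Taylor-expand $(1-x)^\beta=1-\beta x+O(x^2)$. By Lemma~\ref{lm:S_n} the quantity $x=n^{1/\alpha}\mathcal S^{(n)}_{k/n}/(n-\gamma k)$ is $O_{\mathbb P}(n^{1/\alpha}/(n-\gamma k))$ uniformly in $k$, so the quadratic remainder is $O_{\mathbb P}(n^{2/\alpha}/(n-\gamma k)^2)$, which is the stated error. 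Where $n-\gamma k$ is only of order $n^{1/\alpha}$ this error term is itself of order one, so the expansion is to be read modulo the remainder; in the sequel~ii) is applied only for $k<K_n$, where $n-\gamma k\ge\gamma n^\delta\gg n^{1/\alpha}$ since $\delta>1/\alpha$, and it is then genuinely informative.

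Part~iii) is the substantive one; I would organise it as a logarithmic expansion, a summation by parts, and an exponential expansion. First, $\log\Pi_0^k=\sum_{i=1}^k\log(1-1/X_i)=-\sum_{i=1}^k 1/X_i+O_{\mathbb P}(\sum_{i=1}^k 1/X_i^2)$. On $\{k<K_n\}$, part~i) together with the tightness in Lemma~\ref{lm:S_n} gives $X_i\asymp n-\gamma i$ uniformly for $i\le k$ (because $\delta>1/\alpha$ forces $n-\gamma i\ge\gamma n^\delta\gg n^{1/\alpha}$), hence $\sum_{i=1}^k 1/X_i^2=O_{\mathbb P}((n-\gamma k)^{-1})$, and since $\alpha<2$ (so $n^{2/\alpha}\ge n\ge n-\gamma k$) this is $O_{\mathbb P}(n^{2/\alpha}(n-\gamma k)^{-2})$. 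For the leading sum I would apply~ii) with $\beta=-1$, i.e.\ $1/X_i=(n-\gamma i)^{-1}+n^{1/\alpha}\mathcal S^{(n)}_{i/n}(n-\gamma i)^{-2}+O_{\mathbb P}(n^{2/\alpha}(n-\gamma i)^{-3})$, and sum: the remainder contributes $O_{\mathbb P}(n^{2/\alpha}(n-\gamma k)^{-2})$; the deterministic part is $\sum_{i=1}^k(n-\gamma i)^{-1}=\gamma^{-1}\log\frac{n}{n-\gamma k}+O((n-\gamma k)^{-1})$ by Euler--Maclaurin, again absorbed; and there remains $\sum_{i=1}^k S_i(n-\gamma i)^{-2}$, where $S_i:=\sum_{j=1}^i(\Delta_j-\gamma)=n^{1/\alpha}\mathcal S^{(n)}_{i/n}$.

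The crucial and essentially only non-routine step is then a summation by parts: from $S_i-S_{i-1}=\Delta_i-\gamma$ and $S_0=0$,
\[
\sum_{j=1}^k\frac{\Delta_j-\gamma}{n-\gamma j}=\frac{S_k}{n-\gamma k}-\gamma\sum_{j=1}^{k-1}\frac{S_j}{(n-\gamma j)(n-\gamma(j+1))},
\]
and, replacing the product $(n-\gamma j)(n-\gamma(j+1))$ by $(n-\gamma j)^2$ (admissible because $|S_j|=O_{\mathbb P}(n^{1/\alpha})$ uniformly, at a cost $O_{\mathbb P}(n^{1/\alpha}(n-\gamma k)^{-2})$) and using $\gamma(\alpha-1)=1$, this gives
\[
-\sum_{i=1}^k\frac{S_i}{(n-\gamma i)^2}=(\alpha-1)\Bigl(-\frac{S_k}{n-\gamma k}+\sum_{j=1}^k\frac{\Delta_j-\gamma}{n-\gamma j}\Bigr)+O_{\mathbb P}\Bigl(\frac{n^{2/\alpha}}{(n-\gamma k)^2}\Bigr).
\]
Assembling the pieces, $\log\Pi_0^k=\tfrac1\gamma\log\tfrac{n-\gamma k}{n}+B+O_{\mathbb P}(n^{2/\alpha}(n-\gamma k)^{-2})$ with $B$ equal to the bracket above times $\alpha-1$. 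On $\{k<K_n\}$ one has $B=O_{\mathbb P}(n^{1/\alpha}/(n-\gamma k))=o_{\mathbb P}(1)$ and $n^{2/\alpha}(n-\gamma k)^{-2}=O(n^{2/\alpha-2\delta})=o(1)$, so exponentiating and using $e^{B}=1+B+O_{\mathbb P}(B^2)$ with $B^2=O_{\mathbb P}(n^{2/\alpha}(n-\gamma k)^{-2})$ yields the asserted expansion.

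The bulk of the work, and the main obstacle, is purely a matter of bookkeeping: propagating a nested hierarchy of error bounds through $\log$, the Abel summation and $\exp$, uniformly in $k<K_n$, and verifying at each step that the accumulated error stays inside $O_{\mathbb P}(n^{2/\alpha}(n-\gamma k)^{-2})$. The one genuinely conceptual point is the recognition of the Abel-summation identity that rewrites the double sum $\sum_i S_i(n-\gamma i)^{-2}$ as the single sum $\sum_j(\Delta_j-\gamma)(n-\gamma j)^{-1}$, since it is the latter --- a weighted sum of the (approximately centred) increments $\Delta_j-\gamma$ --- that is amenable to the stable functional limit theorem and eventually produces the stochastic integrals appearing in Theorem~\ref{theorem}.
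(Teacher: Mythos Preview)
Your proposal is correct and matches the paper's own proof essentially step for step: part~i) is read off from the definition of $\mathcal S^{(n)}$, part~ii) is a Taylor expansion using the tightness from Lemma~\ref{lm:S_n}, and part~iii) proceeds by the same logarithmic expansion of $\Pi_0^k$, the same application of~ii) with $\beta=-1$, and the same exponential re-expansion at the end. The only cosmetic difference is that where you invoke Abel summation to pass from $\sum_i S_i(n-\gamma i)^{-2}$ to $\gamma^{-1}S_k/(n-\gamma k)-\gamma^{-1}\sum_j(\Delta_j-\gamma)/(n-\gamma j)$, the paper writes $S_i=\sum_{j\le i}(\Delta_j-\gamma)$, interchanges the order of summation, and evaluates the inner sum $\sum_{i\ge j}(n-\gamma i)^{-2}\approx\gamma^{-1}\big((n-\gamma k)^{-1}-(n-\gamma j)^{-1}\big)$ by integral comparison; the two manipulations are of course equivalent.
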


\begin{proof}

i) This is just formula \eqref{eq:X_k with S_n}.

ii) Follows from i) and a Taylor expansion. Note that in the remainder the terms $(S_{k/n}^{(n)})^2$ are tight due to Lemma \ref{lm:S_n}.

iii) Using a Taylor expansion 
\begin{equation}\label{eq:Pi_taylor}
\Pi_0^k=\exp\Big(- \sum_{i=1}^k \frac1{X_i}+O\Big( \sum_{i=1}^k \frac1{X_i^2}\Big)\Big).
\end{equation}
By item ii)
\begin{equation}\label{eq:oneoverX}
\frac 1 {X_i} = \frac1{n-\gamma i} + \frac{n^{1/\alpha} \mathcal S^{(n)}_{i/n}}{(n-\gamma i)^2} 
+ O_{\mathbb P}\Big(\frac{n^{2/\alpha}}{(n-\gamma i)^3}\Big), \notag
\end{equation} 
hence 
\begin{align}\label{eq:Pi_sum1_1}
  \sum_{i=1}^k \frac1{X_i} 
  & = \sum_{i=1}^k \frac1{n-\gamma i} + \sum_{i=1}^k \frac{n^{1/\alpha} \mathcal S^{(n)}_{i/n}}{(n-\gamma i)^2} 
    + O_{\mathbb P} \Big( \frac {n^{2/\alpha}}{(n-\gamma k)^2}\Big) \\
  & = \frac1\gamma \log\frac{n}{n-\gamma k} +  \sum_{i=1}^k \frac{\sum_{j=1}^i (\Delta_j - \gamma)}{(n-\gamma i)^2}  + O_{\mathbb P} \Big( \frac {n^{2/\alpha}}{(n-\gamma k)^2}+ \frac {1}{n-\gamma k}\Big).
\end{align}
By interchanging the sums, using $\frac {n-\gamma k} {n^{2/\alpha}} =o(1)$ for the error term,
\begin{align}\label{eq:sumX}
  \sum_{i=1}^k \frac1{X_i}  
	& = \frac1\gamma \log\frac{n}{n-\gamma k} +  \frac 1 \gamma\sum_{j=1}^k (\Delta_j - \gamma) \Big( \frac{1 }{n-\gamma k} -\frac{1 }{n-\gamma j}  \Big) + O_{\mathbb P} \Big( \frac {n^{2/\alpha}}{(n-\gamma k)^2}\Big)\notag \\
	&= \frac1\gamma \log\frac{n}{n-\gamma k} +  \frac 1 \gamma \frac{n^{1/\alpha} \mathcal S_{k/n}^{(n)} }{n-\gamma k} - \frac 1 \gamma \sum_{j=1}^k \frac{\Delta_j - \gamma }{n-\gamma j}  + O_{\mathbb P} \Big( \frac {n^{2/\alpha}}{(n-\gamma k)^2}\Big).
\end{align}

Plugging this together with \eqref{sum_squares} into \eqref{eq:Pi_taylor} we obtain
\begin{align}\label{eq:Pi_taylor_2}
\Pi_0^k=\exp\Big(- \frac1\gamma \log\frac{n}{n-\gamma k} - \frac 1 \gamma \frac{n^{1/\alpha} \mathcal S_{k/n}^{(n)} }{n-\gamma k} + \frac 1 \gamma\sum_{j=1}^k \frac{\Delta_j - \gamma }{n-\gamma j}  + O_{\mathbb P} \Big( \frac {n^{2/\alpha}}{(n-\gamma k)^2}\Big)\Big).
\end{align}

Finally, for $k <  K_n$ 
\begin{align*}
\frac{n^{1/\alpha} \mathcal S_{k/n}^{(n)} }{n-\gamma k} + \sum_{j=1}^k \frac{\Delta_j - \gamma }{n-\gamma j}= O_{\mathbb P}\Big(\frac { n^{1/\alpha}}{n-\gamma k}\Big)= o_{\mathbb P}(1),
\end{align*}
and we end up  with 
\begin{align*}
\Pi_0^k=\Big(\frac{n-\gamma k}{n}\Big)^{\frac 1 \gamma} \Big(1-(\alpha -1)\frac{n^{1/\alpha} \mathcal S_{k/n}^{(n)} }{n-\gamma k} + (\alpha -1)\sum_{j=1}^k \frac{\Delta_j - \gamma }{n-\gamma j}+ O_{\mathbb P}\Big(\frac {n^{2/\alpha}}{(n-\gamma k)^2}\Big)\Big)
\end{align*}
which is the claim of the lemma.
\end{proof}

\subsection{Expectation given the block counting process}\label{Expectation}

In what follows we will focus on the case $r>1$. The case $r=1$ was handled in \cite{DKW14} (see Equation (3.14) therein for the analog of Lemma \ref{lm:cond_expectation_Z}).

For $m\in [r-1]$ and a vector $(r_1, \dots, r_m)$ such that $r_1,  \dots, r_m \in [r-1]$ and $r_1+ \dots+ r_m= r-1$ let
\begin{align}\label{r_hat}
   \widehat{r}_j := r - \sum_{i=1}^j r_i = r_{j+1}+r_{j+2}+\cdots+r_m+1
\end{align}
(read $\widehat{r}_0 = r, \,\widehat{r}_m=1$).

Note that in what follows we use the terms jumps of the block counting process and mergers in the coalescent tree equivalently.

\begin{lemma}\label{lm:cond_expectation_Z}
For the conditional expectation of the number $Z_{r,k}$ of $r$-blocks after $k$ jumps given $X$ it holds:
\begin{align*}
\E[Z_{r,k} \mid X]= X_k \sum_{(r_1, \dots, r_m)} \sum_{1\leq l_1 < \dots <l_m \leq k} \prod_{j=1}^m\Big(\mathds{1}_{\{\Delta_{l_j}=r_j\}} \cdot\widehat r_j\Big) \cdot \prod_{p=1}^{m} \left(  \frac 1 {X_{l_{p}}} \Pi_{l_{p-1}}^{l_{p}-1}\big(\widehat r_{p-1}\big) \right)  \cdot \Pi_{l_m}^{k},
\end{align*}
where the first sum is taken over all $m \in [r-1]$ and all $m$-tuples $(r_1, \dots, r_m)$ such that $r_1,  \dots, r_m \in [r-1]$ and $r_1+ \dots+ r_m= r-1$ and $l_0:=0$.
Furthermore
\begin{align}\label{eq:orderZ}
\E[Z_{r,k}| X]  = O_{\mathbb P} \Big( \tau_n^{1-\alpha} (\tau_n-k)^{\alpha} \Big).
\end{align}
\end{lemma}

\begin{proof} 
The following holds:
\begin{align}\label{eq:Z}
Z_{r,k} = \sum_{A\subset [n], |A|=r} \sum_{(r_1, \dots, r_m)} \sum_{1\leq l_1 < \dots <l_m \leq k} \mathds{1}_{E_{A,(r_1, \dots, r_m)}^{l_1, \dots, l_m}(k)}, 
\end{align}
where the second sum is taken over all $m \in [r-1]$ and all $m$-tuples $(r_1, \dots, r_m)$ such that $r_1,  \dots, r_m \in [r-1]$ and $r_1+ \dots+ r_m= r-1$. We denoted by \begin{align*}
E_{A,(r_1, \dots, r_m)}^{l_1, \dots, l_m}(k)
\end{align*}
the event that the block $A$ is formed through mergers occurring at times $T_{l_1}, \dots, T_{l_m}$ such that $(\Delta_{l_1}, \dots, \Delta_{l_m})= (r_1, \dots, r_m)$ and that the block still exists at level $k$, i.e. $A \in \Pi_k$. 

We define the following events: 
\begin{align}\label{def:A}
\mathcal A_{r,j}:=\{&\text{branches with leaves numbered $1, \dots, r$} \notag
\\
& \text{ are not involved in the first $j$ merger events} \}
\end{align}
and
\begin{align}\label{def:B}
\mathcal B_{r}:=\{& \text{branches with leaves numbered $r+1, \dots, X_0$} \notag
\\& \text{ are not involved in the first merger event} \}.
\end{align}
Taking the conditional expectation given $X$ in \eqref{eq:Z} and using exchangeability we obtain
\begin{align}\label{eq:cond_exp_Z_1}
\E[Z_{r,k} \mid X]= & \sum_{A\subset [n], |A|=r} \sum_{(r_1, \dots, r_m)} \sum_{1\leq l_1 < \dots <l_m \leq k} \mathbb E\Big(\mathds{1}_{E_{A,(r_1, \dots, r_m)}^{l_1, \dots, l_m}(k)}| X\Big) \notag
\\
& = \binom n r \sum_{(r_1, \dots, r_m)} \sum_{1\leq l_1 < \dots <l_m \leq k} \mathbb P\Big(E_{\{1,2, \dots,r\},(r_1, \dots, r_m)}^{l_1, \dots, l_m}(k)| X\Big).
\end{align}
It holds
\begin{align}\label{eq:prob_E}
\mathbb P\Big(E_{\{1,2, \dots,r\},(r_1, \dots, r_m)}^{l_1, \dots, l_m}(k)| X\Big) & =  \mathbb P_{X_0} (\mathcal A_{r, l_1-1}| X) \cdot \mathds{1}_{\{\Delta_{l_1}=r_1\}} \cdot \mathbb P_{X_{l_1-1}}(\mathcal B_r| X) \notag
\\ 
& \hspace{0.5cm} \cdot \mathbb P_{X_{l_1}} (\mathcal A_{r-r_1, l_2-l_1-1}| X) \cdot \mathds{1}_{\{\Delta_{l_2}=r_2\}} \cdot \mathbb P_{X_{l_2-1}}(\mathcal B_{r-r_1}| X) \notag \\
& \hspace{0.5cm} \dots \notag \\
&  \hspace{0.5cm} \cdot \mathbb P_{X_{l_m}} (\mathcal A_{r-r_1-\dots -r_m, k-l_m-\dots -l_1-1}| X).
\end{align}  
Note that the number of jumps taking part in the $k$-th jump is $\Delta_k+1$. Then (denoting by $(x)_j$ the $j$-th falling factorial of $x$)
\begin{align}\label{eq:prob_A}
\mathbb P_{X_0} (\mathcal A_{r,k} | X)& = \frac {\binom {X_0-r}{\Delta_1+1} \cdots \binom {X_{k-1}-r}{\Delta_k+1}}{\binom {X_0}{\Delta_1+1}\cdots \binom {X_{k-1}}{\Delta_k+1}} \notag\\
& = \frac {(X_0-r)\cdots (X_0-\Delta_1-r)}{X_0 \cdots (X_0-\Delta_1)}\cdots \frac {(X_{k-1}-r)\cdots (X_{k-1}-\Delta_k-r)}{X_{k-1}\cdots (X_{k-1}-\Delta_k)}\notag\\
&= \frac {(X_0-r)\cdots (X_1-r)}{X_0 \cdots X_1}\cdots \frac {(X_{k-1}-r)\cdots (X_k-r)}{X_{k-1}\cdots X_k} \notag\\
&= \frac {(X_0-r)\cdots (X_k-r+1)}{X_0 \cdots (X_k+1)}\prod_{j=1}^k \Big(1-\frac r {X_j}\Big) \notag\\
&= \frac {(X_0-r)_{X_0-X_k}}{(X_0)_{X_0- X_k}}\prod_{j=1}^k \Big(1-\frac r {X_j}\Big)
\end{align}
and
\begin{align}\label{eq:prob_B}
\mathbb P_{X_0} (\mathcal B_{r} | X)& = \frac {\binom {r}{\Delta_1+1}}{\binom {X_0}{\Delta_1+1}}  = \frac {r \cdots (r-\Delta_1)}{X_0 \cdots (X_0-\Delta_1)} =  \frac {r \cdots (r-\Delta_1+1)}{X_0 \cdots (X_1+1)}\cdot \frac{r-\Delta_1}{X_1}.
\end{align}
Plugging these in \eqref{eq:prob_E} we obtain
\begin{align*}
\mathbb P\Big(E_{\{1,2, \dots,r\},(r_1, \dots, r_m)}^{l_1, \dots, l_m}(k)| X\Big) &=  \prod_{p=1}^m\mathds{1}_{\{\Delta_{l_p}=r_p\}} \cdot \frac {(X_0-r)_{X_0-X_{l_1-1}}}{(X_0)_{X_0-X_{l_1-1}}}\prod_{j=1}^{l_1-1} \Big(1-\frac r {X_j}\Big) \\
& \hspace{1cm} \cdot \frac {r \cdots (r-r_1+1)}{X_{l_1-1} \cdots (X_{l_1}+1)}\cdot \frac{r-r_1}{X_{l_1}} \\
& \cdot \frac {(X_{l_1}-r+r_1)_{X_{l_1}-X_{l_2-1}}}{(X_{l_1})_{X_{l_1}-X_{l_2-1}}}\prod_{j=l_1+1}^{l_2-1} \Big(1-\frac {r-r_1} {X_j}\Big) \\
& \hspace{1cm} \cdot \frac {(r-r_1) \cdots (r-r_1-r_2+1)}{X_{l_2-1} \cdots (X_{l_2}+1)}\cdot \frac{r-r_1-r_2}{X_{l_2}} \\
& \cdots\\
& \cdot \frac{(X_{l_m}-1)_{X_{l_m}-X_k}}{(X_{l_m})_{X_{l_m}-X_k}}  \prod_{j=l_m+1}^{k} \Big(1-\frac {1} {X_j}\Big).
\end{align*}
Note that since $X_{l_i}-r+\Delta_{l_i}=X_{l_i-1}-r$ for $i=1, \dots, m$, the product of the falling factorials in the nominator is equal to $(X_0-r) \cdots X_k$.  We thus obtain that
\begin{align}\label{eq:prob_E_final}
\mathbb P\Big(E_{\{1,2, \dots,r\},(r_1, \dots, r_m)}^{l_1, \dots, l_m}(k)| X\Big) = & \, \frac{(X_0-r) \cdots X_k}{X_0\cdots (X_k+1)} \cdot  r!  \notag \\
& \cdot \prod_{p=1}^m \Big(\mathds{1}_{\{\Delta_{l_p}=r_p\}}\frac{\widehat r_p}{X_{l_p}} \cdot \prod_{j=l_{p-1}+1}^{l_p-1} \Big(1-\frac {\widehat  r_{p-1}} {X_j}\Big) \Big) \notag \\
& \cdot \prod_{j=l_m+1}^{k} \Big(1-\frac {1} {X_j}\Big).
\end{align}
Since $X_0=n$ \eqref{eq:prob_E_final} together with \eqref{eq:cond_exp_Z_1} gives the first claim.

From \eqref{eq:prob_E_final} and Lemma \ref{lm:DKW} we obtain
\begin{align*}
& \E[Z_{r,k}| X] 
\\
& = O_{\mathbb P} \Big(\sum_{(r_1, \dots,r_m)} \sum_{1\leq l_1<\dots<l_m\leq k} \prod_{p=1}^m X_k \Big( \Big(\Pi_{l_{p-1}}^{l_p-1}\Big)^{\widehat  r_{p-1}}\cdot \frac 1 {X_{l_p}}\Big) \Pi_{l_m}^k \Big)\notag \\
& = O_{\mathbb P} \Big(\gamma(\tau_n-k)\sum_{(r_1, \dots,r_m)} \sum_{1\leq l_1<\dots<l_m\leq k} \prod_{p=1}^m  \Big( \Big(\frac{\tau_n-l_p-1}{\tau_n-l_{p-1}}\Big)^{(\alpha-1)\widehat  r_{p-1}} \frac 1 {\gamma(\tau_n-l_p)}\Big)\Big(\frac{\tau_n-k}{\tau_n-l_m}\Big)^{\alpha-1} \Big)\notag \\
& = O_{\mathbb P} \Big((\tau_n-k)\sum_{(r_1, \dots,r_m)} \sum_{1\leq l_1<\dots<l_m\leq k} \prod_{p=1}^m  \Big(\Big(\frac{\tau_n-l_p}{\tau_n}\Big)^{(\alpha-1)r_p}\cdot \frac 1 {\tau_n-l_p}\Big)\Big(\frac{\tau_n-k}{\tau_n}\Big)^{\alpha-1} \Big)\notag 
\end{align*}
Note that by allowing $l_1, \dots, l_m$ also to be equal we only enlarge the respective sum and therefore
\begin{align*}
\E[Z_{r,k}| X]  & = O_{\mathbb P} \Big( (\tau_n-k) \Big(\frac{\tau_n-k}{\tau_n}\Big)^{\alpha-1}  \sum_{(r_1, \dots,r_m)}  \prod_{p=1}^m  \Big( \sum_{l=1}^k \Big(\frac{\tau_n-l}{\tau_n}\Big)^{(\alpha-1)r_p}\cdot \frac 1 {\tau_n-l}\Big)\Big)\notag \\
& =  O_{\mathbb P} \Big( (\tau_n-k)^{\alpha} \tau_n^{1-\alpha}   \sum_{(r_1, \dots,r_m)} \tau_n^{(1-\alpha)\sum_{i=1}^m r_i } \prod_{p=1}^m  \Big( \sum_{l=1}^k \Big((\tau_n-l)^{(\alpha-1)r_p-1}\Big)\Big) \Big)\notag \\
& =  O_{\mathbb P} \Big( (\tau_n-k)^{\alpha} \tau_n^{r(1-\alpha)} \sum_{(r_1, \dots,r_m)} \prod_{p=1}^m  \Big( \tau_n^{(\alpha-1)r_p}-(\tau_n-k)^{(\alpha-1)r_p}\Big)\Big)\notag \\
& = O_{\mathbb P} \Big( \tau_n^{1-\alpha} (\tau_n-k)^{\alpha} \Big).
\end{align*}
This gives the second claim.
\end{proof}

\subsection{Quantifying the approximations in \eqref{eq:approximations}}\label{Quantifying}

We start with the second approximation in \eqref{eq:approximations}.
Let
\[\bar \ell_r:=\sum_{k=0}^{\tau_n-1} \frac{ \E[Z_{r,k} | X] }{ \lambda_{X_k}}.\]  

\begin{lemma}
It holds that
\[\sum_{k=0}^{\tau_n-1} \frac{ Z_{r,k} }{ \lambda_{X_k}} = \bar \ell_r + o_{\mathbb P}(n^{1-\al+\frac 1 \al}).\]
\end{lemma}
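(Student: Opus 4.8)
The plan is to bound the difference $\tilde\ell_r-\bar\ell_r=\al\Gamma(\al)\sum_{k=0}^{\tau_n-1}(Z_{r,k}-\E[Z_{r,k}\mid X])/X_k^\al$ by a conditional second-moment (variance) computation given $X$, and then to control that conditional variance pathwise using the asymptotics of $X_k$ from Lemma~\ref{lm:DKW_partB}. Write $D_k:=Z_{r,k}-\E[Z_{r,k}\mid X]$. Since the $D_k$ are conditionally centred, it suffices by Chebyshev (conditionally on $X$) to show that $\E\big[\big(\sum_{k}D_k/X_k^\al\big)^2\,\big|\,X\big]=o_{\P}(n^{2(1-\al+1/\al)})$, i.e.\ $\sum_{j,k}\cov(Z_{r,j},Z_{r,k}\mid X)/(X_j^\al X_k^\al)=o_{\P}(n^{2-2\al+2/\al})$; then an application of the conditional Chebyshev inequality together with the tightness statements from Lemma~\ref{lm:S_n} upgrades this to the claimed $o_\P$. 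So the real content is a bound on the conditional covariances $\cov(Z_{r,j},Z_{r,k}\mid X)$.

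To get at those covariances I would use the representation \eqref{eq:Z} of $Z_{r,k}$ as a sum over $r$-subsets $A$ of indicators $\mathbf 1_{E_A}$ of the events that $A$ is a block at level $k$, exactly as in the proof of Lemma~\ref{lm:cond_expectation_Z}. For the variance at a single level $k$, note that for two distinct $r$-subsets $A\neq A'$ that are both blocks at level $k$ the events are ``compatible but disjointly supported'' after the merger times that built them, so the joint conditional probability factorises up to a correction, and one expects $\var(Z_{r,k}\mid X)=O_\P(\E[Z_{r,k}\mid X])$, i.e.\ of the same order as the mean — this is the usual phenomenon that the number of $r$-blocks is, conditionally, close to a sum of weakly dependent indicators. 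For $j<k$ one writes $Z_{r,j}=\sum_A\mathbf 1_{E_A(j)}$ and $Z_{r,k}=\sum_{A'}\mathbf 1_{E_{A'}(k)}$; if an $r$-block $A$ present at level $j$ survives to level $k$ it contributes to both, if it is absorbed into a larger block it contributes only to $Z_{r,j}$, and $r$-blocks newly created between $j$ and $k$ contribute only to $Z_{r,k}$. A Markov-property computation along the lines of \eqref{eq:prob_A}--\eqref{eq:prob_E_final} then gives $\cov(Z_{r,j},Z_{r,k}\mid X)=O_\P\big(\E[Z_{r,j}\mid X]\,\Pi_j^k(r)\big)+O_\P\big(\E[Z_{r,k}\mid X]\big)$, the first term being the contribution of blocks that survive from $j$ to $k$ (whose survival probability is $\Pi_j^k(r)$, comparable to $(\Pi_j^k)^r$ by Lemma~\ref{lm:DKW}~iii)), the second a lower-order diagonal-type term.

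With such a covariance bound the double sum is estimated using the known growth rates. From \cite{BBS07} (the constants $c_k$ quoted after Theorem~\ref{theorem}) and Lemma~\ref{lm:DKW}, $\E[Z_{r,k}\mid X]$ is of order $X_k$ up to constants, $X_k\asymp\gamma(\tau_n-k)\asymp n-\gamma k$, and $\lambda_{X_k}\asymp X_k^\al$, so $\E[Z_{r,k}\mid X]/X_k^{2\al}\asymp (\tau_n-k)^{1-2\al}$; summing over $k<\tau_n$ and using $\al>1$ gives a contribution of order $n^{2-2\al+1}=n^{3-2\al}$, which is $o(n^{2-2\al+2/\al})$ precisely because $\al<2$ forces $1<2/\al$. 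The off-diagonal part, after plugging in $\Pi_j^k(r)\asymp((\tau_n-k)/(\tau_n-j))^{(\al-1)r}$ and $X_j^{-\al}\asymp(\tau_n-j)^{-\al}$, $X_k^{-\al}\asymp(\tau_n-k)^{-\al}$, reduces to a double sum $\sum_{j<k}(\tau_n-j)^{1-\al-(\al-1)r}(\tau_n-k)^{-\al+(\al-1)r}$ which one evaluates by first summing in $k$ then in $j$; the exponents work out (again using only $1<\al<2$) to give $o(n^{2-2\al+2/\al})$ as well.

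The main obstacle is the covariance estimate itself: one must carefully track, via the Markov property of the block-counting process together with the combinatorial bookkeeping of which subsets are blocks at levels $j$ and $k$, that the dependence between $Z_{r,j}$ and $Z_{r,k}$ is dominated by the ``same block survives'' mechanism and that all genuinely higher-order interaction terms are $O_\P(1/X_k)$ relative to the leading term — this is the place where the $O_\P(1/X_k)$ remainders advertised earlier (and handled in Lemma~\ref{lm:DKW}~iii)) are used, and where an argument analogous to the one in \cite{DKW14} for the total length (the case $r=1$) has to be extended to general $r$ and to two different levels. Once that bound is in hand, the rest is the routine summation above plus a conditional Chebyshev argument, with tightness from Lemma~\ref{lm:S_n} ensuring the pathwise $O_\P$-constants do not spoil the conclusion.
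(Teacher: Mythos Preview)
Your approach differs from the paper's and has two concrete gaps.

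First, the paper does not bound $\cov(Z_{r,j},Z_{r,k}\mid X)$ directly. It writes a one-step recursion for $G_k:=Z_{r,k}-\E[Z_{r,k}\mid X]$, solves it as $G_k=-X_k\Pi_0^k\sum_{i\le k}\tilde H_i/(X_i\Pi_0^i)$, and hence $\bar\ell_r-\tilde\ell_r=\sum_i(\tilde H_i/X_i)\sum_{k\ge i}\Pi_i^k X_k^{1-\al}$. The innovation $\tilde H_i$ is then split into a martingale-difference part $J_i$ (orthogonal given $X$, which gives $\E[D_1^2\mid X]=O_\P(n^{3-2\al})$ immediately) and a part $I_i=\P(\mathscr A_i\mid X)-\P(\mathscr A_i\mid X,Z_{i-1})$ coming from the \emph{creation} of $r$-blocks. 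Your ``same block survives'' heuristic with survival factor $\Pi_j^k(r)$ corresponds only to the $J_i$-mechanism; for $r\ge2$ the creation event $\mathscr A_i$ depends on the full vector $(Z_{1,i-1},\dots,Z_{r-1,i-1})$, so the conditional Markov structure of $(Z_{r,k})_k$ given $X$ is $r$-dimensional and $\cov(Z_{r,j},Z_{r,k}\mid X)$ picks up contributions from lower-order $Z_{r',\cdot}$. Your sketch does not address this, and isolating it as a lower-order correction is precisely the substantive work the paper does in its treatment of $D_2$ (itself relying on the non-trivial bound $\var(Z_{r,k}\mid X)\le c\,\E[Z_{r,k}\mid X]$, proved by a second-moment computation parallel to Lemma~\ref{lm:cond_expectation_Z}).

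Second, there is an arithmetic slip in your diagonal estimate: since $1-2\al<-1$, the sum $\sum_{k<\tau_n}(\tau_n-k)^{1-2\al}$ is $O(1)$, not $O(n^{3-2\al})$. For $\al>(1+\sqrt5)/2$ one has $2-2\al+2/\al<0$, so an $O(1)$ second moment is \emph{not} $o(n^{2-2\al+2/\al})$ and your Chebyshev step would fail there. This particular point is repairable by using the sharper $\E[Z_{r,k}\mid X]=O_\P\big(\tau_n^{1-\al}(\tau_n-k)^\al\big)$ (cf.\ \eqref{eq:orderZ}), which yields a diagonal contribution $O_\P(n^{1-\al})$; but it shows that the crude order $\E[Z_{r,k}\mid X]\asymp X_k$ you invoke is not enough.
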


\begin{proof}
For $r=1$ we refer to Lemma 3.2 in \cite{DKW14} whose proof we adapt here for the case $r\geq 2$.
Define
\[H_k=H_k(r):= \text{number of branches of order $r$ that participate in the $k$-th jump of $X$} \]
(recall that a branch of order $r$ refers to a block with $r$ elements) and note that
\begin{equation}\label{eq:hyp}
\mathcal L (H_k | X, Z_{r, k-1}) = \text{Hyp} (X_{k-1}, Z_{r, k-1}, \D_k +1),
\end{equation}
where $\text{Hyp}$ refers to the hypergeometric distribution, i.e.\ given $X$ and $Z_{r, k-1}$, $H_k$
is distributed as the number of red balls drawn when drawing $\D_k +1$ times without replacement
from an urn which contains $Z_{r, k-1}$ red balls and $X_{k-1}-Z_{r, k-1}$ non-red balls. 
Define also
\[\mathscr A_k=\mathscr A_k(r):= \{\text{a branch of order $r$ is formed through the $k$-th merger}\}.\]
Then for $k\geq 1$
\[Z_{r,k}= Z_{r,k-1}- H_k + \mathds 1_{\mathscr A_k}\]
and
\begin{align*}
\E [Z_{r,k}|X,Z_{r,k-1}]&= Z_{r,k-1}- \frac{Z_{r,k-1}}{X_{k-1}} (\D_k +1)+ \Prob {\mathscr A_k |X,Z_{r,k-1}}
\\
&= Z_{r,k-1} \frac{X_k-1}{X_{k-1}} + \Prob{ \mathscr A_k |X,Z_{r,k-1}}.
\end{align*}
Thus
\begin{align}
\E [Z_{r,k}|X]= \E[Z_{r,k-1}|X] \frac{X_k-1}{X_{k-1}} + \Prob{\mathscr A_k |X}.
\end{align}
Note also that
\[Z_{r,k}= Z_{r,k-1}\frac{X_k-1}{X_{k-1}} +Z_{r,k-1} \frac{\D_k +1}{X_{k-1}} - H_k + \mathds 1_{\mathscr A_k}\]
and by denoting 
\[G_k:=Z_{r,k} - \E[Z_{r,k} | X]\]
we obtain
\begin{equation}\label{eq:Gk}
\frac {G_k}{X_k} =\frac{G_{k-1}}{X_{k-1}} \Big( 1 - \frac{1}{X_k} \Big) - \frac{\widetilde H_k}{X_k}, 
\end{equation}
with
\[\widetilde H_k: = H_k - Z_{r,k-1} \frac{\D_k +1}{X_{k-1}} - \mathds 1_{\mathscr A_k}+ \Prob{\mathscr A_k |X}.\]
Recall \eqref{eq:Pi}. Then \eqref{eq:Gk} is equivalent to
\[\frac {G_k}{X_k\Pi_0^k} =\frac{G_{k-1}}{X_{k-1}\Pi_0^{k-1}} - \frac{\widetilde H_k}{X_k\Pi_0^k}. \]
We iterate this relation and since $G_0=0$ we obtain
\[G_k= - X_k \Pi_0^k \sum_{i=1}^k \frac {\widetilde H_i}{X_i \Pi_0^i}.\]
It follows that
\[ \bar\ell_r - \sum_{k=0}^{\tau_n-1} \frac{ Z_{r,k} }{ \lambda_{X_k}} =\sum_{k=0}^{\tau_n-1} \frac{X_k\Pi_0^k}{\lambda_{X_k}} \sum_{i=1}^k \frac{\widetilde H_i}{X_i\Pi_0^i} = \sum_{i=1}^{\tau_n-1} \frac{\widetilde H_i}{X_i} \sum_{k=i}^{\tau_n-1} \frac{X_k\Pi_i^k}{\lambda_{X_k}}.  \]
We denote by
\[Z_i:=(Z_{1, i}, Z_{2,i}, \dots, Z_{r, i})\]
the vector recording the numbers of branches of orders $1,2,\dots, r$ present in the tree after $i$ mergers. We further write $\tilde H_i=J_i +I_i $ where
\[ J_i:= H_i - Z_{r,i-1} \frac{\D_i +1}{X_{i-1}} - \mathds 1_{\mathscr A_i}+ \Prob{\mathscr A_i |X,Z_{i-1}} \quad \text{ and } \quad I_i:=-\Prob{\mathscr A_i |X,Z_{i-1}}+\Prob{\mathscr A_i |X}.\]
Thus
\begin{align}\label{eq:HJI}
\bar\ell_r - \sum_{k=0}^{\tau_n-1} \frac{ Z_{r,k} }{ \lambda_{X_k}}= \sum_{i=1}^{\tau_n-1} \frac{J_i}{X_i} \sum_{k=i}^{\tau_n-1} \frac{X_k\Pi_i^k}{\lambda_{X_k}} +  \sum_{i=1}^{\tau_n-1} \frac{I_i}{X_i} \sum_{k=i}^{\tau_n-1} \frac{X_k\Pi_i^k}{\lambda_{X_k}}=:D_1+D_2.  
\end{align}
We start by bounding $D_1$. Note that given $X$ and $Z_{i-1} $, the random variable $J_i$ is centered and therefore $(J_i)$ forms a sequence of martingale differences with respect to the filtration generated by $(X, Z_{0}, Z_{1}, \dots Z_{i-1})$. Thus $J_1,  J_2, \dots$ are uncorrelated given $X$. Moreover, from \eqref{eq:hyp}, since $J_i $ is centered
\[\E[J_i^2 | X ] = \E\big[ \E[J_i^2 | X, Z_{i-1} ] \big| X \big] \leq 2\Big((\D_i +1 )  \frac{\E[Z_{r,i-1}| X]} {X_{i-1}}+ 1\Big)\]
and
\begin{align}\label{eq:D1}
 \E[ D_1^2 | X ] & = \sum_{i=1}^{\tau_n-1} \frac{\E[ J_i^2|X]}{X_i^2} \Big(\sum_{k=i}^{\tau_n-1} \frac{X_k\Pi_i^k}{\lambda_{X_k}}  \Big)^2 \notag
\\
& \leq \sum_{i=1}^{\tau_n-1} \frac{2\Big((\D_i +1 )  \frac{\E[Z_{r,i-1}| X]} {X_{i-1}}+ 1\Big)}{X_i^2} \Big(\sum_{k=i}^{\tau_n-1} \frac{X_k\Pi_i^k}{\lambda_{X_k}}\Big)^2 
\end{align}

Using \eqref{eq:orderZ}, \eqref{eq:lambda_m} and Lemma \ref{lm:DKW} in \eqref{eq:D1} we obtain
\begin{align*}
 \E[ D_1^2 | X ] & =  O_{\mathbb P} \Big( \sum_{i=1}^{\tau_n-1} \frac{(\D_i +1 ) \Big( \frac {\tau_n-i} {\tau_n-1}\Big)^{\al-1}+ 1}{(\tau_n-i)^2} \Big((\tau_n-i)^{2(1-\al)} (\tau_n-i)^2\Big) \Big) \\
& =  O_{\mathbb P} \Big( \tau_n^{1-\al} \sum_{i=1}^{\tau_n-1} (\D_i +1 ) (\tau_n-i)^{1-\al} +\sum_{i=1}^{\tau_n-1} (\tau_n-i)^{2-2\al}\Big). 
\end{align*}

Again from Lemma \ref{lm:DKW} $X_i$ is of order $\tau_n-i$ uniformly in $0\leq i<\tau_n$. Since $X_{i-1}-X_i +1 \leq 2(X_{i-1}-X_i)$ it follows that
\begin{align*}
 \E[ D_1^2 | X ] & =  O_{\mathbb P} \Big( \tau_n^{1-\al} \sum_{i=1}^{\tau_n} (\D_i +1 ) X_i^{1-\al} +\sum_{i=1}^{\tau_n} (\tau_n-i)^{2-2\al}\Big). 
\end{align*}
Therefore, using (observe that $x \mapsto x^{1-\alpha}$ is decreasing)
\[\sum_{i=1}^{\tau_n-1}  X_{i}^{1-\al}(X_{i-1}-X_i) \leq \int_{X_{\tau_n-1}}^{X_0}x^{1-\al}dx\leq \frac 1 {2-\al} X_0^{2-\al}\] 
we obtain that 
\begin{align*}
 \E[ D_1^2 | X ] & =  O_{\mathbb P} \Big( n^{3-2\al} \Big).
\end{align*}
Since $\frac 3 2 - \al < 1-\al + \frac 1 \al$ it holds by Cauchy-Schwarz that
\begin{align}\label{eq:D1_final}
 \E[ |D_1| \,\,| X ] & =  o_{\mathbb P} \Big( n^{1-\al + \frac 1 \al} \Big).
\end{align}

We now bound $D_2$, the second term on the left hand side of \eqref{eq:HJI}. Note that
\begin{align*}
\E[\1_{\mathscr A_k}| X, Z_{k-1} ] & =\sum_{u=2}^{r} \1_{\Delta_{k}=u-1} \sum_{a_1, \dots, a_r} \frac{\prod_{i=1}^{r-1} \binom{Z_{i,k-1}}{a_i}} {\binom {X_{k-1}}{u}}
\\
& =\sum_{u=2}^{r} \1_{\Delta_{k}=u-1} \sum_{a_1, \dots, a_r} c_{u, a_1, \dots, a_r}\frac{\prod_{i=1}^{r-1} (Z_{i,k-1})_{(a_i)}} {(X_{k-1})_u},
\end{align*}
where the second sum is taken over $a_1, \dots, a_r$ having the properties: $a_j \in \{0, \dots, Z_{j, k-1}\}$, $\sum_{j=1}^r ja_j=r$ and $\sum_{j=1}^r a_j=u$.  $c_{u, a_1, \dots, a_r}>0$ is a constant. (A branch of order $r$ is formed through a merger of size $u$, involved are $a_j$ branches of order $j$.)

Now since the sums and the numbers $a_1, \dots, a_r$ are bounded, in order to bound $|I_k|=\Big| \E[\1_{\mathscr A_k}| X, Z_{k-1}] - \E[\1_{\mathscr A_k}| X]\Big|$ it suffices to bound, for $u \in 2, \dots, r$, terms of the form 
\begin{align}\label{def:Y}
Y_{u,k}:=\Big|\frac { \prod_{i=1}^u Z_{r_i, k}}{X_{k-1}^u} - \frac {\E[\prod_{i=1}^u Z_{r_i, k} | X]}{X_{k-1}^u}\Big|,
\end{align}
with $1\leq r_i\leq r$.  Note that 
\begin{align*}
\E \Big[&\Big| \prod_{i=1}^u Z_{r_i, k}- \E[ \prod_{i=1}^u Z_{r_i, k} | X]\Big| \, |X \Big] \\
&\leq \E \Big[\Big| \prod_{i=1}^uZ_{r_i, k}- \prod_{i=1}^u\E[Z_{r_i, k} | X]\Big| \, |X \Big]+\E \Big[\Big| \prod_{i=1}^u \E[Z_{r_i, k} | X] - \E[ \prod_{i=1}^u Z_{r_i, k} | X]\Big| \, |X \Big] \\
& = \E \Big[\Big| \prod_{i=1}^uZ_{r_i, k}- \prod_{i=1}^u\E[Z_{r_i, k} | X]\Big| \, |X \Big]+ \Big| \prod_{i=1}^u \E[Z_{r_i, k} | X] - \E[ \prod_{i=1}^u Z_{r_i, k} | X]\Big|
\\
& = \E \Big[\Big| \prod_{i=1}^uZ_{r_i, k}- \prod_{i=1}^u\E[Z_{r_i, k} | X]\Big| \, |X \Big]+ \Big| \E\Big[\prod_{i=1}^u \E[Z_{r_i, k} | X] - \prod_{i=1}^u Z_{r_i, k} | X\Big]\Big|
\\
& \leq 2 \E \Big[\Big| \prod_{i=1}^uZ_{r_i, k}- \prod_{i=1}^u\E[Z_{r_i, k} | X]\Big| \, |X \Big].
\end{align*}
Now using the fact that for positive numbers $a_1, \dots, a_k, b_1, \dots, b_k$  it holds that
\begin{align*}
\Big|\prod_{j=1}^k a_j-\prod_{j=1}^k b_j \Big| &\leq \sum_{j=1}^k a_1\cdots a_{j-1} |a_j-b_j|b_{j+1}\cdots b_{k},
\end{align*}
and that $Z_{r_i, k}\leq X_{k-1}$ we further obtain that
\begin{align*}
\E \Big[\Big| \prod_{i=1}^u Z_{r_i, k}- \prod_{i=1}^u\E[Z_{r_i, k} | X]\Big| \, |X \Big] & \leq \E \Big[ \sum_{i=1}^u | Z_{r_i, k}- \E[Z_{r_i, k} | X] | \cdot X_{k-1}^{u-1} \, |X \Big] \\
& = X_{k-1}^{u-1} \sum_{i=1}^u \E \Big[| Z_{r_i, k}- \E[Z_{r_i, k}  | X] | \, |X \Big].
\end{align*}
Thus 
\begin{align}\label{eq:Y}
Y_{u,k}\leq \frac 2 {X_{k-1}} \sum_{i=1}^u \E \Big[| Z_{r_i, k}- \E[Z_{r_i, k} | X] | \, |X \Big]\leq \frac 2 {X_{k-1}} \sum_{i=1}^u  \E \Big[( Z_{r_i, k}- \E[Z_{r_i, k} | X] )^2\, |X \Big]^{1/2}.
\end{align}
We now show that for any $r\in [n]$
\begin{align}\label{ineq:Z}
 \E \Big[( Z_{r, k}- \E[Z_{r, k} | X] )^2\, |X \Big] \leq c \E \Big[ Z_{r, k}\, |X \Big],
\end{align}
for $c>0$ constant.
It holds by \eqref{eq:Z} that
\begin{align}\label{eq:Z_squared}
\E\Big[Z^2_{r, k}\, |X \Big] &= \E\Big[\Big(\sum_{A\subset [n], |A|=r} \sum_{(r_1, \dots, r_m)} \sum_{1\leq l_1 < \dots <l_m \leq k} \mathds{1}_{E_{A,(r_1, \dots, r_m)}^{l_1, \dots, l_m}(k)} \, \Big)^2 |X \Big] \notag \\
& = \E\Big[\sum_{{A'\subset [n], |A'|=r} \atop {A''\subset [n], |A''|=r}} \sum_{{(r'_1, \dots, r'_{m'})} \atop {(r''_1, \dots, r''_{m''})}} \sum_{{1\leq l'_1 < \dots <l'_{m'} \leq k} \atop {1\leq l''_1 < \dots <l''_{m''} \leq k}} \mathds{1}_{E_{A',(r'_1, \dots, r'_{m'})}^{l'_1, \dots, l'_{m'}}(k) \cap E_{A'',(r''_1, \dots, r''_{m''})}^{l''_1, \dots, l''_{m''}}(k)} \, |X \Big].
\end{align}
Note that $E_{A',(r'_1, \dots, r'_{m'})}^{l'_1, \dots, l'_{m'}}(k) \cap E_{A'',(r''_1, \dots, r''_{m''})}^{l''_1, \dots, l''_{m''}}(k) =\emptyset$ if $A' \cap A''\neq\emptyset$ and $A' \neq A''$. Let now $A' \cap A''=\emptyset$ and let $\{l'_1, \dots, l'_{m'}\} \cap \{l''_1, \dots, l''_{m''}\}=\emptyset$. Denote $m:=m'+m''$ and by $0=l_0 <l_1<\dots <l_m<k$ the ordered set of levels $l'_1, \dots, l'_{m'}, l''_1, \dots, l''_{m''}$. Let also $r_1, \dots, r_m$ be the sequence of jumps the $r'_1, \dots, r'_{m'}, r''_1, \dots, r''_{m''}$ rearranged to correspond to $l_1\dots l_m$. It holds (recall \eqref{def:A} and \eqref{def:B})
\begin{align*}
\mathbb P \Big( &E_{A',(r'_1, \dots, r'_{m'})}^{l'_1, \dots, l'_{m'}}(k) \cap E_{A'',(r''_1, \dots, r''_{m''})}^{l''_1, \dots, l''_{m''}}(k)  \, | X \Big)\\
 & = \prod_{i=1}^{m} \mathbb P_{X_{l_{i-1}}} \Big( \mathcal A_{2r - r_1-\dots - r_{i-1}, l_i-l_{i-1}-1}\Big) \cdot \1_{\Delta_{l_{i}}=r_i}\\
&\hspace{1cm} \cdot \prod_{i=1}^{m'} \mathbb P_{X_{l'_{i}-1}} \Big( \mathcal B_{r - r'_1-\dots - r'_{i-1}}\Big) \cdot \prod_{i=1}^{m''} \mathbb P_{X_{l''_{i}-1}} \Big( \mathcal B_{r - r''_1-\dots - r''_{i-1}}\Big) \cdot \mathbb P_{X_{l_m}} \Big( \mathcal A_{2, k-l_m}\Big).
\end{align*}
Using \eqref{eq:prob_A} and \eqref{eq:prob_B} and a similar argument as in the proof of Lemma \ref{lm:cond_expectation_Z} we further obtain that
\begin{align*}
\mathbb P \Big( E_{A',(r'_1, \dots, r'_{m'})}^{l'_1, \dots, l'_{m'}}(k) &\cap E_{A'',(r''_1, \dots, r''_{m''})}^{l''_1, \dots, l''_{m''}}(k)  \, | X \Big) \\&=   \, \frac {(X_0-2r)\dots X_k(X_k-1)}{X_0\dots (X_k+1)}\cdot r'! \,r''! \\
& \qquad \cdot \prod_{i=1}^m \Big(\1_{\Delta_{l_{i}}=r_i} \cdot\prod_{j=l_{i-1}+1}^{l_i-1} \Big(1- \frac {2r-r_1-\dots-r_{i-1}}{X_j}\Big)\Big)\\
& \qquad \cdot \prod_{i=1}^{m'} \frac {r-r'_1-\dots - r'_i}{X_{l'_i}} \cdot \prod_{i=1}^{m''} \frac {r-r''_1-\dots - r''_i}{X_{l''_i}} \cdot \prod_{j=l_m+1}^{k} \Big(1-\frac {2}{X_{j}}\Big)
\end{align*}
Using \eqref{eq:prob_E_final} we obtain for a constant $c>0$ 
that

\begin{align*}
\mathbb P \Big( E_{A',(r'_1, \dots, r'_{m'})}^{l'_1, \dots, l'_{m'}}(k) &\cap E_{A'',(r''_1, \dots, r''_{m''})}^{l''_1, \dots, l''_{m''}} (k) \, | X \Big) 
\\
& \leq 
\mathbb P \Big( E_{A',(r'_1, \dots, r'_{m'})}^{l'_1, \dots, l'_{m'}}(k)  \, | X \Big)\cdot \mathbb P \Big( E_{A'',(r''_1, \dots, r''_{m''})}^{l''_1, \dots, l''_{m''}}(k)  \, | X \Big) \cdot \Big(1+\frac c {X_k}\Big).
\end{align*}
Therefore, by \eqref{eq:Z_squared} and using for the last inequality the fact that $\E[Z_{r, k}\, |X ] \le X_k $ we further get that
\begin{align*}
\E\Big[Z^2_{r, k}\, |X \Big] & \leq \E\Big[\sum_{{A\subset [n], |A|=r} } \sum_{{(r_1, \dots, r_m)} } \sum_{{1\leq l_1 < \dots <l_m \leq k} } \mathds{1}_{E_{A,(r_1, \dots, r_m)}^{l_1, \dots, l_m}(k)} \, |X \Big] \\
& \hspace{0.1cm} + \sum_{{A'\cap A''=\emptyset}} \sum_{{(r'_1, \dots, r'_{m'})} \atop {(r''_1, \dots, r''_{m''})}} \sum_{{1\leq l'_1 < \dots <l'_{m'} \leq k} \atop {1\leq l''_1 < \dots <l''_{m''} \leq k}} \mathbb P \Big( E_{A',(r'_1, \dots, r'_{m'})}^{l'_1, \dots, l'_{m'}}(k)  \, | X \Big) \mathbb P \Big( E_{A'',(r''_1, \dots, r''_{m''})}^{l''_1, \dots, l''_{m''}}(k)  \, | X \Big) \Big(1+\frac c {X_k}\Big)
\\
&\leq \mathbb E[Z_{r,k} | X]+\Big(1+\frac c {X_k}\Big) \Big(\sum_{{A'\subset [n], |A'|=r} \atop {A''\subset [n], |A''|=r}} \sum_{{(r'_1, \dots, r'_{m'})} \atop {(r''_1, \dots, r''_{m''})}} \sum_{{1\leq l'_1 < \dots <l'_{m'} \leq k} \atop {1\leq l''_1 < \dots <l''_{m''} \leq k}} \mathbb P \Big( E_{A,(r_1, \dots, r_m)}^{l_1, \dots, l_m}(k) \Big)\Big)^2
\\
& =\mathbb E[Z_{r,k} | X]+ \Big(1+\frac c {X_k}\Big) \E[Z_{r, k}\, |X ]^2 \\
& = \mathbb E[Z_{r,k} | X]^2+ \E[Z_{r, k}\, |X ] \Big(1+c\frac  {\E[Z_{r, k}\, |X]}{X_k}\Big) 
\\
& \leq \mathbb E[Z_{r,k} | X]^2 + (1+c) \mathbb E[Z_{r,k} | X],
\end{align*}
which proves \eqref{ineq:Z}. Thus \eqref{eq:Y} becomes
\begin{align}\label{eq:Y2}
Y_{u,k}\leq \frac 2 {X_{k-1}} \sum_{i=1}^u \E \Big[| Z_{r_i, k}- \E[Z_{r_i, k} | X] \, |X \Big]\leq \frac c {X_{k-1}} \sum_{i=1}^u  \E [ Z_{r_i, k} \, |X ]^{1/2}.
\end{align}
Now, using the observation before formula \eqref{def:Y} together with \eqref{eq:Y2}, \eqref{eq:lambda_m} and Lemma \ref{lm:DKW} we obtain 
\begin{align*}
\E[|D_2| \, \, | X] &\leq\sum_{i=1}^{\tau_n-1} \frac{1}{X_i} \sum_{k=i}^{\tau_n-1} \frac{X_k\Pi_i^k}{\lambda_{X_k}} \cdot \E\big[|I_i| \,\, | X\big]
\\
& = O_{\mathbb P}\Big( \sum_{i=1}^{\tau_n-1} \frac{1}{X_i^{\al+1}} (\tau_n-i) \sum_{s=1}^{r-1} \E \Big[ Z_{s, i} \, |X \Big]^{1/2}\Big).
\end{align*}
By \eqref{eq:orderZ} and \eqref{lm:tau}
\begin{align*}
\E[|D_2| \, \, | X] & = O_{\mathbb P}\Big( n^{1/2}\sum_{i=1}^{\tau_n-1} \frac{1}{X_i^{\al+1}} (\tau_n-i) \Big(\frac {X_i } n\Big)^{\alpha /2}\Big)\\
& = O_{\mathbb P} \Big(n^{1/2-\alpha/2} \sum_{i=1}^{\tau_n}  (n-\gamma i)^{-\alpha /2}\Big) = O_{\mathbb P} \Big( n^{1/2-\alpha/2} n^{1-\alpha /2}\Big)= O_{\mathbb P} \Big( n^{\frac 3 2 - \alpha}\Big).
\end{align*}
Since $\frac 3 2 - \alpha <1-\alpha+\frac 1 \alpha$, this together with \eqref{eq:HJI} and \eqref{eq:D1_final} proves the lemma.
\end{proof}

\medskip

We proceed to evaluate the length $\bar \ell_r =  \sum_{k=0}^{\tau_n-1}  \frac {\mathbb E [Z_{r,k} | X]}{\lambda_{X_k}}$.
Note from Lemma \ref{lm:cond_expectation_Z},
Lemma \ref{lm:DKW} iii) and \eqref{eq:lambda_m} that
\begin{align*}
\bar \ell_r & = \al \Gamma(\al) \sum_{(r_1, \dots, r_m)} \sum_{1\leq l_1<\dots<l_m\leq k<\tau_n} X_k^{1-\al} \Big(1 + O_\mathbb P\Big(\frac 1 {X_k}\Big)\Big) \cdot \Pi_{l_m}^{k}  \prod_{p=1}^m \widehat r_p  \cdot \mathds 1_{\{\Delta_{l_p}=r_p\}} \\
& \hspace{2cm} \cdot \prod_{p=1}^m \Big( \Big( \Pi_{l_{p-1}}^{l_p-1} \Big)^{\widehat r_{p-1}} \Big(1+ O_{\mathbb P}\Big(\frac {1} {X_{l_{p}-1}}\Big)\Big) \frac 1 {X_{l_p}},
\end{align*} 
where the first sum is taken over all $m \in [r-1]$ and all $m$-tuples $(r_1, \dots, r_m)$ such that $r_1,  \dots, r_m \in [r-1]$ and $r_1+ \dots+ r_m= r-1$ and $l_0:=0$. Note that by redistributing the factors in the products
\begin{align*}
\prod_{p=1}^m  \Big( \Pi_{l_{p-1}}^{l_p-1} \Big)^{\widehat r_{p-1}} 
& =
\prod_{p=1}^m \Big( \Big( \Pi_0^{l_{p-1}} \Big)^{ r_{p}}\frac {(\Pi_0^{l_{p}})^{r_{p+1}+\dots+r_m+1}}{(\Pi_0^{l_{p-1}})^{r_{p}+\dots+r_m+1}} \Big(\frac 1 {1-\frac 1 {X_{l_p}}} \Big)^{r_{p+1}+\dots+r_m+1}
\\
& =
\prod_{p=1}^m  \Big( \Pi_0^{l_{p-1}} \Big)^{ r_{p}}\Pi_0^{l_m} \Big(1+ O_\mathbb P \Big(\frac 1  {X_{l_k}} \Big)\Big)
\end{align*}  and using Lemma \ref{lm:DKW} we obtain
\begin{align}\label{eq:l1bar_0}
\bar \ell_r & = \al \Gamma(\al) \sum_{(r_1, \dots, r_m)} \sum_{1\leq l_1<\dots<l_m\leq k<\tau_n} X_k^{1-\al} \Pi_{0}^{k} \prod_{p=1}^m\widehat r_{p}  \mathds 1_{\{\Delta_{l_p}=r_p\}} \cdot \prod_{p=1}^m \Big( \Pi_{0}^{l_p-1} \Big)^{r_p} \frac 1 {X_{l_p}} + R
\end{align}
with
\begin{align*}
R & = O_{\mathbb P}\Bigg(\sum_{(r_1, \dots, r_m)} \sum_{1\leq l_1<\dots<l_m\leq k<\tau_n} X_k^{1-\al} \frac 1 {X_k} \cdot \Pi_{0}^{k} \cdot  \prod_{p=1}^m \Big( \Pi_{l_{p-1}}^{l_p-1} \Big)^{\widehat r_{p-1}} \frac 1 {X_{l_p}}\mathds 1_{\{\Delta_{l_p}=r_p\}} \Bigg)
\\
& = O_{\mathbb P}\Big( \sum_{k=1}^{\tau_n-1} \frac {\mathbb E[Z_{r,k} \mid X]} {X_k^{\alpha+1}}\Big)
\\
& = O_{\mathbb P}\Big( \sum_{k=1}^{\tau_n-1} \frac { \tau_n^{1-\alpha} (\tau_n-k)^{\alpha}} {(\tau_n-k)^{\alpha+1}}\Big)
\\
& =  o_{\mathbb P}\Big( {n}^{1-\alpha+1/\alpha}\Big).
\end{align*}

Recall the coupling between of the jumps of the block counting process and the sequence $V_1, V_2, \dots$ given in Section \ref{Preparations}.
We show that when replacing the indicators in \eqref{eq:l1bar_0} by the corresponding probability weights of the random variable $V$ we make an error of just $o_\mathbb P (n^{1-\alpha+1/\alpha})$. 

\begin{lemma}\label{lm:indicators_new}
For $r\geq 2$ we have
\begin{align*}
\bar \ell_r= \al \Gamma(\al) & \sum_{(r_1, \dots, r_m)} \bigg(  \prod_{p=1}^m \widehat r_p \mathbb P( V=r_p) \bigg) \cdot  \sum_{1\leq l_1<\dots<l_m\leq k<\tau_n} X_k^{1-\al} \Pi_0^{k} \prod_{p=1}^m \frac 1 {X_{l_p}} \Big( \Pi_{0}^{l_p}\Big) ^{r_p} \notag \\
&+ o_{\mathbb P} \Big( n^{1/\al+1-\al} \Big).
\end{align*}
\end{lemma}

Note that the analogous formula for $r=1$ reads 
(compare the proof of Lemma~2.3 in \cite{DKW14})
\begin{align}
  \label{barell1}
\bar \ell_1 = \al \Gamma(\al) \sum_{k=1}^{\tau_n-1} 
X_k^{1-\al} \cdot \Pi_{0}^{k} + 
o_{\mathbb P} \Big( n^{1/\al+1-\al} \Big).
\end{align}
This is consistent with the formula from Lemma~\ref{lm:indicators_new} when interpreting empty sums as inexistent and empty products as $1$ and can be proved completely analogously to the case $r \ge 2$ (as is in fact already done in \cite{DKW14}). 

Note that \cite[Thm.~1.1]{DKW14} represents 
\[\bar \ell_1 
= c_1 n^{2-\alpha} + n^{1-\alpha+1/\alpha} 
\Big( \frac{\alpha(2-\alpha)(\alpha-1)^{1/\alpha+1}\Gamma(\alpha)}{\Gamma(2-\alpha)^{1/\alpha}} \Big) \varsigma 
+ o_{\mathbb P} \Big( n^{1/\al+1-\al} \Big)
\]  
where $\varsigma$ is a centered, totally asymmetric $\alpha$-stable random variable normalized by the tail decay $\P(\varsigma < - x) \sim x^{-\alpha}$, 
$P(\varsigma > x) = o(x^{-\alpha})$ for $x\to\infty$. 
Thus, comparing with \eqref{S1tail}, 
\begin{equation}
\label{S1varsigma-Umrechnung}
    \mathcal{S}_1 \mathop{=}^d \gamma^{1/\alpha} \mathcal{S}_{1/\gamma} 
\mathop{=}^d -\big(\Gamma(2-\alpha)\big)^{-1/\alpha} \varsigma,
\end{equation}
which explains the coefficient $R_{1,1}$ from \eqref{R11}. See also Remark~\ref{rem:DKW14result} below.

\begin{proof}[Proof of Lemma~\ref{lm:indicators_new}]
We first replace the indicators $\1_{\{\Delta_{l_p}=r_p\}}$ by the conditional expectations $\E[\1_{\{\Delta_{l_p}=r_p\}} | X_{l_p-1}]$. 

Note that on the event $\{\Delta_l=r_p\}$ we have $X_{l}=X_{l-1}-r_p$.
Put
\[
  \widetilde{\tau} := \min \{ i \le n : X_{i} \le r_p+1 \} 
\]
(note $\tau_n - r_p \le \widetilde{\tau} \le \tau_n$)
and let for $1\leq j \leq \widetilde{\tau}$ 
\begin{align*}
S^{(p)}_{j}:= \sum_{l=1}^{j} \frac 1 {X_{l-1}-r_p} \Big(\Pi_{0}^{l-1} \Big)^{r_p} \big(\mathds 1_{\{\D_{l}=r_p\}} - \E[\mathds 1_{\{\Delta_{l}=r_p\}} | X_{l-1}]\big).
\end{align*}
Note that $S^{(p)}$ is a martingale with quadratic variation for $j < \widetilde{\tau}$
\begin{align*}
  \langle S^{(p)}\rangle_j
  & =  \sum_{l=1}^j (X_{l-1}-r_p)^{-2} \left(\Pi_0^{l-1}\right)^{2 r_p} \E\big[ ( \1_{\{\Delta_l=r_p\}} - \mathbb E[ \1_{\{\Delta_l=r_p\}} \mid X_{l-1}])^2
  \,\big|\, X_{l-1} \big] \\
  & \le \sum_{l=1}^j \frac{1}{(X_{l-1}-r_p)^2} \le \sum_{i=X_{j-1}-r_p}^\infty \frac{1}{i^2}
    \le \frac{1}{X_{j-1}-r_p-1} .
\end{align*}
Pick $\varepsilon > 0$ so small that
\begin{equation}
  \label{eq:pickeps}
  2\varepsilon + \frac{1}{2} < \frac{1}{\alpha}.
\end{equation}
Now consider the stopping times $\tau_n^{(b)}:= \min \{ l \le n : X_{l} \le n^{b\varepsilon} \}$ with $1 \le b \le \lfloor 1/\varepsilon \rfloor$. 
They are bounded by $n$, therefore, by means of Doob's maximal inequality
\begin{align*}
  & \mathbb{P}\Big( \max_{ j < \tau_n^{(b)} } |S_j^{(p)}| 
    \ge n^{\varepsilon} \cdot n^{-(b\varepsilon)/2} \Big)
    \le \frac{C}{n^{2\varepsilon} \cdot n^{-b\varepsilon}} \E\Big[ \langle S^{(p)}\rangle_{\tau_n^{(b)}} \Big]
    \le \frac{C}{n^{2\varepsilon}}.
\end{align*}
Also, because of the strong Markov property $\tau_n- \tau_{n}^{(b)} \sim \gamma^{-1} X_{\tau_n^{(b)}}\sim \gamma^{-1}n^{b \varepsilon}$ in probability,
and therefore uniformly for $\tau_n^{(b+1)} \le j < \tau_n^{(b)}$ (with $b \ge 1$ fixed)
\[ S_j^{(p)} =O_{\mathbb P} \Big(n^{\varepsilon -b\varepsilon/2}\Big) =
  O_{\mathbb P}\Big( n^{\varepsilon} \cdot (\tau_n-\tau_n^{(b)})^{-1/2}\Big)= O_{\mathbb P}\Big( n^{2\varepsilon} \cdot (\tau_n-j)^{-1/2}\Big)
  .\]
Choosing $b=1, \ldots, \lfloor 1/\varepsilon \rfloor$ yields
\[S_j^{(p)}= O_{\mathbb P}\Big( n^{2\varepsilon} \cdot (\tau_n-j)^{-1/2}\Big) \]
uniformly 
for all $0 \le j < \tau_n^{(1)}$.
Furthermore, since $\langle S^{(p)} \rangle_{\widetilde{\tau}} \le 1$ we have also have
by analogous arguments
\[ S_j^{(p)} = O_{\mathbb P}\Big( n^{2\varepsilon} \cdot (\tau_n-j)^{-1/2}\Big)
\]
uniformly in $\tau_{n}^{(1)} \le j \le \widetilde{\tau}$.
Combining the above shows that 
\begin{align}\label{eq:S}
  S^{(p)}_{j} =O_{\mathbb P}\Big(n^{2\varepsilon} \cdot (\tau_n-j)^{-1/2}\Big)
  \quad \text{uniformly in } 1 \le j \le \widetilde{\tau}.
\end{align}
(The bound in \eqref{eq:S} is not sharp but will suffice for our
purposes in \eqref{eq:some_error_term} below.)

Note further that by Lemma \ref{lm:DKW} for $1 \le i \le j \le \widetilde{\tau}$
\begin{align*}
\bar S^{(p)}_{i,j}:&=\sum_{l=i}^{j}\frac 1 {X_{l-1}-r_p} \Pi_0^{l-1} 
 =O_{\mathbb P} \Big(\sum_{l=i}^{j}(\tau_n-l)^{-1} \Big(\frac{\tau_n-l}{\tau_n}\Big)^{\alpha-1} \Big) 
 =O_{\mathbb P} \Big(\Big(\frac{\tau_n-i} {\tau_n}\Big)^{\alpha-1} \Big)
\end{align*}
and thus 
\begin{align}\label{eq:Sbar}
\max_{1\leq i\leq j < \tau_n}\bar S^{(p)}_{i,j} =O_{\mathbb P}(1).
\end{align}
It follows that (note that in the second equality we bound the sum of products from above by $\prod_{p=1}^{m-1} \sum_{l=1}^{m-1}  \Big( \Pi_{0}^{l-1} \Big)^{r_p} \cdot \frac 1 {X_{l-1 }-r_p}\cdot \mathds 1_{\{\D_{l}=r_p\}} \,$)
\begin{align}\label{eq:some_error_term}
\sum_{k=1}^{\tau_n-1} &X_k^{1-\al}\Pi_0^{k}  \sum_{1\leq l_1<\dots<l_{m-1}\leq k}\prod_{p=1}^{m-1} \Big( \Pi_{0}^{l_p-1} \Big)^{r_p} \cdot \frac 1 {X_{l_{p}-1 }-r_p}\cdot \mathds 1_{\{\D_{l_p}=r_p\}} \notag \\
& \hspace{2cm}\cdot  \sum_{l_m=l_{m-1}+1} ^k \frac {\Big( \Pi_{0}^{l_m-1}\Big)^{r_m} } {X_{l_m-1}-r_{m-1}} \big(\mathds 1_{\{\D_{l_m}=r_m\}} - \E[\mathds 1_{\{\Delta_{l_m}=r_m\}} | X_{l_{m-1}}]\big) \notag
\\
& =  \sum_{k=1}^{\tau_n-1} X_k^{1-\al}\Pi_0^{k}  \sum_{l_m=m}^k \frac {\Big( \Pi_{0}^{l_m-1}\Big)^{r_m} } {X_{l_m-1}-r_{m-1}} \big(\mathds 1_{\{\D_{l_m}=r_m\}} - \E[\mathds 1_{\{\Delta_{l_m}=r_m\}} | X_{l_{m-1}}]\big) \notag \\
& \hspace{2cm}\cdot
\sum_{1\leq l_1<\dots<l_{m-1}<l_m}\prod_{p=1}^{m-1} \Big( \Pi_{0}^{l_p-1} \Big)^{r_p} \cdot \frac 1 {X_{l_{p}-1 }-r_p}\cdot \mathds 1_{\{\Delta_{l_p}=r_p\}} 
  \notag
\\
& =O_{\mathbb P} \Big( \sum_{k=0}^{\tau_n-1} (\tau_n- k)^{1-\al} \Big( \frac {\tau_n- k}{\tau_n} \Big)^{\alpha-1}n^{2\varepsilon} (\tau_n- k)^{-1/2} \Big) \notag \\
& = O_{\mathbb P} \Big( \tau_n^{1-\alpha+1/2} n^{2\varepsilon} \Big)\notag \\
& = o_{\mathbb P} \Big( n^{1/\al+1-\al} \Big)
\end{align}
(recall the choice of $\varepsilon$ from \eqref{eq:pickeps}).

We obtain that
\begin{align*}
\sum_{k=1}^{\tau_n-1}& X_k^{1-\al} \Pi_0^{k} \sum_{1\leq l_1<\dots<l_m\leq k}\prod_{p=1}^m \Big( \Pi_{0}^{l_p-1} \Big)^{r_p} \cdot \frac 1 {X_{l_{p} -1}-r_p}\cdot \mathds 1_{\{\D_{l_p}=r_p\}}  \\
&= \sum_{k=1}^{\tau_n-1} X_k^{1-\al} \Pi_0^{k} \sum_{1\leq l_1<\dots<l_{m-1}\leq k}\prod_{p=1}^{m-1} \Big( \Pi_{0}^{l_p-1} \Big)^{r_p} \cdot \frac 1 {X_{l_{p} -1}-r_p}\cdot \mathds 1_{\{\D_{l_p}=r_p\}}
\notag \\
& \hspace{4cm}
 \cdot \sum_{l_m=l_{m-1}+1} ^k \frac {\Big( \Pi_{0}^{l_m-1}\Big)^{r_m} } {X_{l_m-1}-r_{m-1}} \E[\mathds 1_{\{\Delta_{l_m}=r_m\}} | X_{l_{m-1}}] \notag \\
& \hspace{0.5cm} + o_{\mathbb P} \Big( n^{1/\al+1-\al} \Big).
\end{align*}
Interchanging the order of summation $m-1$ times it follows by the same argument that
\begin{align*}
\sum_{k=1}^{\tau_n-1}& X_k^{1-\al} \Pi_0^{k} \sum_{1\leq l_1<\dots<l_m\leq k}\prod_{p=1}^m \Big( \Pi_{0}^{l_p-1} \Big)^{r_p} \cdot \frac 1 {X_{l_{p} -1}-r_p}\cdot \mathds 1_{\{\Delta_{l_p}=r_p\}}  \\
&= \sum_{k=1}^{\tau_n-1} X_k^{1-\al} \Pi_0^{k} \sum_{1\leq l_1<\dots<l_m\leq k}\prod_{p=1}^m \Big( \Pi_{0}^{l_p-1} \Big)^{r_p} \cdot \frac 1 {X_{l_{p} -1}-r_p}\cdot  \E[\mathds 1_{\{\Delta_{l_p}=r_p\}} | X_{l_{p-1}}] 
\\
& \qquad + o_{\mathbb P} \Big( n^{1/\al+1-\al} \Big).
\end{align*}
By Lemma \ref{lm:DKW} and \eqref{couplingK12_1} it holds for $1\leq j\leq k$ that
\begin{align*}
\sum_{l=1}^{j} & \frac 1 {X_{l-1}-r_p}\Big(\Pi_{0}^{l-1}\Big)^{r_p} \Big|\, \E[\1_{\{\Delta_{l_p}=r_p\}} | X_{l-1}]-\mathbb P(V_l=r_p) \Big|\\
& \leq \sum_{l=1}^{j} \frac 1 {X_{l-1}-r_p} \Pi_{0}^{l-1} \,\E[\1_{\{\Delta_{l}\neq V_l\}} | X_{l-1}] 
\\
 &= O_{\mathbb P} \Big(\sum_{l=1}^{j} (\tau_n-l)^{-1}\Big(\frac{\tau_n-l}{\tau_n}\Big)^{\alpha-1} \cdot (\tau_n-l)^{-1}\Big)
\\
& = O_{\mathbb P} \Big(\tau_n^{1-\alpha}(\tau_n-j)^{\alpha-2}\Big)
\\
& = O_{\mathbb P} \Big((\tau_n-j)^{-1}\Big).
\end{align*}
Using a similar calculation as in \eqref{eq:some_error_term} and interchanging the order of summation as above we obtain that
\begin{align}\label{eq:approx2}
\bar \ell_r & = \al \Gamma(\al) \sum_{(r_1, \dots, r_m)} \prod_{p=1}^m\widehat r_p \mathbb P(V=r_p)\cdot \sum_{k=1}^{\tau_n-1} X_k^{1-\al} \Pi_{0}^{k} \notag\\
& \hspace{3cm} \cdot\sum_{1\leq l_1<\dots<l_m\leq k}\prod_{p=1}^m \Big( \Pi_{0}^{l_p-1} \Big)^{r_p} \frac 1 {X_{l_{p} -1}-r_p} + o_{\mathbb P} \Big( n^{1/\al+1-\al} \Big).
\end{align}
Moreover, since for $1\leq l \leq k$
\[ \Big|\frac 1 {X_{l-1}-r_p} - \frac 1 {X_{l-1}}\Big| \leq \frac { r_p} {X_{l-1}(X_{l-1}-r_p)}=O_\mathbb P\Big( \frac {1} {(\tau_n- l)(\tau_n- l -r_p)} \Big) \]
it follows that
\begin{align}\label{eq:err1}
\sum_{l=1}^{k} \Big(\frac 1 {X_{l-1}-r_p} - \frac 1 {X_{l-1}}\Big)\Big(\Pi_{0}^{l-1}\Big)^{r_p}&=O_\mathbb P\Big(\sum_{l=1}^{k} \frac {1} {(\tau_n- l)(\tau_n- l -r_p)} \Big(\frac {\tau_n- l}{\tau_n}\Big)^{\al-1}\Big)\notag  \\
&=O_\mathbb P\Big(\tau_n^{1-\al}\sum_{l=1}^{k} (n-\g l)^{\al-3}\Big) \notag
\\
&=O_\mathbb P\Big(\tau_n^{1-\al} (\tau_n- k)^{\al-2}\Big)\notag\\
& = O_{\mathbb P} \Big((\tau_n-k)^{-1}\Big).
\end{align}
Also for $1\leq l \leq k$
\[ \Big|\frac 1 {X_{l-1}} - \frac 1 {X_{l}}\Big| \leq \frac { \Delta_{l}} {X_{l}(X_{l}+\Delta_l)}\leq \frac { \Delta_{l}} {X_{l}^2}\]
and since from Lemma \ref{lm:DKW} $X_l$ is of order $\tau_n-l$ uniformly in $0\leq l<\tau_n$, it follows that
\begin{align*}
\sum_{l=1}^{k} \Big(\frac 1 {X_{l-1}} - \frac 1 {X_{l}}\Big)\Big(\Pi_{0}^{l-1}\Big)^{r_p}&=O_\mathbb P\Big(\sum_{l=1}^{k} \frac {X_{l-1}-X_l} {X_l^2} \Big(\frac {\tau_n-l}{\tau_n}\Big)^{\al-1}\Big)=O_\mathbb P\Big(\sum_{l=1}^{k} \frac {X_{l-1}-X_l} {X_l^2} \Big(\frac {X_l}{\tau_n}\Big)^{\al-1}\Big). 
\end{align*}
Using
\[\sum_{l=1}^{k}  X_{l}^{\al-3}(X_{l-1}-X_l) \leq \int_{X_{k}}^{X_0}x^{\al-3}dx\leq \frac 1 {2-\al} X_k^{\al-2}\] 
we further obtain that
\begin{align}\label{eq:err2}
\sum_{l=1}^{k} \Big(\frac 1 {X_{l-1}} - \frac 1 {X_{l}}\Big)\Big(\Pi_{0}^{l-1}\Big)^{r_p}&=O_\mathbb P\Big( \Big(\frac {X_k^{\alpha-2}}{\tau_n^{\alpha-1}}\Big)\Big) = O_\mathbb P\Big(  (\tau_n-k)^{-1}\Big). 
\end{align}
Moreover note that by Lemma \ref{lm:DKW} that
\begin{align*}
\sum_{l=1}^{k}\frac 1 {X_{l}} \Big(\Big(\Pi_{0}^{l}\Big)^{r_p}-\Big(\Pi_{0}^{l-1}\Big)^{r_p} \Big)& = O_\mathbb P\Big( \sum_{l=1}^{k}  \frac 1 {X_{l}^2} \,\Big(\Pi_{0}^{l}\Big)^{r_p}\Big)
\\
&=O_\mathbb P\Big( \sum_{l=1}^{k} (\tau_n-l)^{-2} \Big(\frac{\tau_n-l}{\tau_n}\Big)^{\alpha-1}\Big) \\
&= O_\mathbb P\Big(  (\tau_n-k)^{-1}\Big). 
\end{align*}
which combined with \eqref{eq:err1} and \eqref{eq:err2}, as above, gives rise to an error of order $o_{\mathbb P}\Big(n^{1-\alpha+1/\alpha}\Big)$. Thus, plugging this in \eqref{eq:approx2} proves the lemma.
\end{proof}

\subsection{Decomposing $\bar\ell_r$ into the deterministic part and a sum of weighted centred jumps}\label{Decomposing}

As before (see Lemma \ref{lm:DKW_partB} part ii)) let
\begin{align}\label{def:Kn}
 K_n:=\Big\lfloor \frac n \gamma -  n^\delta \Big \rfloor\qquad \text{with}\qquad 
\frac 1 \alpha <\delta<1
\end{align}
and consider the length gathered from the leaves up to level $K_n$ and between $K_n$ and $\tau_n$ separately. Note that by \eqref{lm:tau} the probability that $\tau_n> K_n$ converges to 1. We write for $r\in \mathbb N$
\begin{align}\label{eq:ell_bar with L}
\bar \ell_r =\bar \ell_r^{(1)}+\bar \ell_r^{(2)}+ o_{\mathbb P} \Big( n^{1/\al+1-\al} \Big)
\end{align}
with 
\begin{align}\label{eq:ell_bar_i}
\bar \ell_r^{(i)}:= \al \Gamma(\al)  \sum_{(r_1, \dots, r_m)} \prod_{p=1}^m \widehat r_p\mathbb P( V=r_p) \cdot L^{(i)}(r_1, \dots, r_m), \quad i=1,2
\end{align}
and
\begin{align}\label{eq:ell_1}
L^{(1)}(r_1, \dots, r_m):= &  \sum_{k=1}^{K_n} X_k^{1-\al}\Pi_0^{k}  \sum_{1\leq l_1<\dots<l_m\leq k}  \prod_{p=1}^m \frac 1 {X_{l_p}} \Big( \Pi_{0}^{l_p}\Big) ^{r_p}, 
\end{align}
\begin{align}\label{eq:ell_2}
L^{(2)}(r_1, \dots, r_m):= &  \sum_{k=K_n+1}^{\tau_n-1} X_k^{1-\al} \Pi_0^{k} \sum_{1\leq l_1<\dots<l_m\leq k}  \prod_{p=1}^m \frac 1 {X_{l_p}} \Big( \Pi_{0}^{l_p}\Big) ^{r_p}. 
\end{align}

Note that since $X_k=1$ for $k\geq \tau_n$, $\Pi_0^k=0$ for $k\geq \tau_n$ and thus on the event $\{K_n > \tau_n\}$ $L^{(2)}(r_1, \dots, r_m)=0$ and 
the sum in \eqref{eq:ell_1} runs up to $\tau_n-1$.

Before evaluating $L^{(1)}(r_1, \dots, r_m)$ and $L^{(2)}(r_1, \dots, r_m)$ we state a technical lemma.
\begin{lemma}\label{lm:technical}
Let for $s \in \mathbb N$, $x\in [0,1]$ and $b_p>0$ for $p\in \{1, \dots, s\}$ 
\begin{align}\label{def:i}
i_{b_1,\dots,b_s}(x):=\mathop{\int \cdots \int}_{x < y_{s} < \cdots < y_{1}<1} \prod_{p=1}^{s} y_p^{b_p-1} dy_{1}\dots dy_{s}. 
\end{align}
It holds
\begin{align*}
i_{b_1,\ldots,b_s}(x) = a_0 + \sum_{i=1}^{s} a_i x^{b_{s-i+1}+ \cdots + b_s}
\end{align*}
where for $0\leq i\leq s$ (when $i=0$ or $i=s$ the corresponding product is void) 
\begin{align}\label{alphas}
a_i = a_i(b_1, \dots,b_s)=(-1)^i \prod_{j=1}^{s-i} \frac 1{b_j+ \cdots + b_{s-i}} \prod_{k=s-i+1}^{s} \frac 1{b_{s-i+1}+ \cdots + b_k}. 
\end{align}
\end{lemma}

\begin{proof}
Integrating first with respect to $y_s$ in the first multiple integral on the right hand side we obtain
\begin{align}\label{eq:ia}
 \mathop{\int \cdots \int}_{0 < y_{s} < \cdots < y_{1}<1} \prod_{p=1}^{s} y_p^{b_p-1} dy_{1}\dots dy_{s} &=\frac {1} {b_s} \mathop{\int \cdots \int}_{0 < y_{s-1} < \cdots < y_{1}<1} y_{s-1}^{b_s} \prod_{p=1}^{s-1} y_p^{b_p-1} dy_{1}\dots dy_{s-1}.
\end{align}
Iterating we obtain 
\begin{align} \label{valueatzero} 
a_0=i_{b_1,\ldots,b_s}(0)= \frac 1{b_s(b_s+b_{s-1})\cdots(b_s+\cdots + b_1)}.\end{align}
Next, putting $i_{b_1, \ldots, b_{s-1}}(x)=1$ for $s=1$,
\begin{align}\label{iteration} i_{b_1,\ldots,b_s}'(x)=- x^{b_s-1} i_{b_1, \ldots, b_{s-1}}(x) .\end{align}
From this equation via induction on $s$
\[ i_{b_1,\ldots,b_s}(x) = a_s x^{b_1+ \cdots + b_s}+ \cdots+ a_{i}x^{b_{s-i+1}+ \cdots+b_s} + \cdots + a_1 x^{b_s} + a_0, \]
where the coefficients $a_i$ depend on $b_1, \ldots b_s$. We also have
\[ i_{b_1,\ldots,b_{s-1}}(x) = \beta_{s-1} x^{b_1+ \cdots + b_{s-1}}+ \cdots + \beta_1 x^{b_s} + \beta_0.\]
Then because of \eqref{iteration} for $1 \le i \le s$
\[ a_i=-\frac{\beta_{i-1}}{b_{s-i+1}+ \cdots+b_s}.\]
Using the same argument for $\beta_{i-1}$ and iterating we obtain
\[ a_i=i_{b_1, \ldots,b_{s-i} }(0) \prod_{k=s-i+1}^{s} \frac {-1}{b_{s-i+1}+ \cdots + b_k}. \]
Using also \eqref{valueatzero} we get the claim.
\end{proof}

\begin{lemma}\label{lm:L2} 
It holds for $\delta$ as in \eqref{def:Kn}
\begin{align*}
L^{(2)}(r_1, \dots, r_m)=&n^{2-\alpha} \gamma^{-(m+1)} \int_{0}^{\gamma n^{\delta-1}} dx_{m+1} \cdot \mathop{\int \cdots \int}_{x_{m+1} < x_m < \cdots < x_1 < 1}  \prod_{p=1}^m x_p^{(\alpha-1)r_p-1}dx_m \dots dx_1 \\
&
 + n^{1-\alpha+1/\alpha}\frac 1 \gamma \prod_{p=1}^m \frac{1}{r_p + r_{p+1} + \cdots + r_m} \cdot\mathcal\, \mathcal S^{(n)}_{1/\gamma} 
\\
& + o_{\mathbb P}(n^{1-\alpha+1/\alpha}).
\end{align*}
\end{lemma}

\begin{proof}

Using Lemma \ref{lm:DKW} we get for $\varepsilon>0$ using that $l_p \leq k$
\begin{align*}
L^{(2)}&(r_1,  \dots, r_m) \\
&=(\gamma\tau_n)^{1-\alpha}  \sum_{k=K_n}^{\tau_n-1} \Big(1+O_{\mathbb P}\Big((\tau_n-k)^{1/\alpha-1+\varepsilon}\Big)\Big)\\
&\hspace{2cm} \cdot \sum_{1\leq l_1<\dots<l_m\leq k}  \prod_{p=1}^m \frac 1 {\gamma \tau_n} \Big(\frac {\tau_n-l_p} {\tau_n} \Big)^{(\alpha-1)r_p-1} \Big(1+O_{\mathbb P}\Big((\tau_n-l_p)^{1/\alpha-1+\varepsilon}\Big)\Big)
\\
&=(\gamma\tau_n)^{1-\alpha} \sum_{k=K_n}^{\tau_n-1} \Big(1+O_{\mathbb P}\Big((\tau_n-k)^{1/\alpha-1+\varepsilon}\Big)\Big) \sum_{1\leq l_1<\dots<l_m\leq k}  \prod_{p=1}^m \frac 1 {\gamma \tau_n} \Big(\frac {\tau_n-l_p} {\tau_n} \Big)^{(\alpha-1)r_p-1}.
\end{align*}
Replacing the sums by integrals and noting that the multiple sum is uniformly bounded
\begin{align}\label{eq:L2_1}
L^{(2)}&(r_1, \dots, r_m) \notag
\\
& = (\gamma \tau_n)^{1-\alpha} \bigg(\tau_n \Big(\int_{0}^{1-K_n/\tau_n} dx_{m+1}\mathop{\int \cdots \int}_{x_{m+1} < x_m < \cdots < x_1 < 1} \prod_{p=1}^m \frac 1 \gamma x_p^{(\alpha-1)r_p-1}dx_m \dots dx_1 + O_\mathbb P(\tau_n^{-1}) \Big) \notag
\\
& \hspace{2.5cm}+ O_\mathbb P\Big((\tau_n-K_n)^{1/\alpha+\varepsilon}\Big) \bigg) \notag
\\
& = (\gamma \tau_n)^{1-\alpha} \gamma^{-m} \Big(\tau_n\int_{0}^{\gamma n^{\delta-1}} dx_{m+1} \mathop{\int \cdots \int}_{x_{m+1} < x_m < \cdots < x_1 < 1}  \prod_{p=1}^m x_p^{(\alpha-1)r_p-1}dx_m \dots dx_1 \notag \\
&\hspace{2.5cm} + \tau_n \int_{\gamma n^{\delta-1}}^{1-K_n/\tau_n} dx_{m+1} \mathop{\int \cdots \int}_{x_{m+1} < x_m < \cdots < x_1 < 1}  \prod_{p=1}^m x_p^{(\alpha-1)r_p-1}dx_m \dots dx_1 + O_\mathbb P(1) \Big) \notag
\\
&\hspace{1cm} + O_\mathbb P\Big(\tau_n^{1-\alpha}  n^{\delta(1/\alpha+\varepsilon)}\Big).
\end{align}
Consider now the second integral on the right hand side. It holds by Lemma \ref{lm:technical} with $b_p=(\alpha-1)r_p$ for $1\leq p\leq m$ that
\begin{align*}
 \tau_n &\int_{\gamma n^{\delta-1}}^{1-K_n/\tau_n}  dx_{m+1}\mathop{\int \cdots \int}_{x_{m+1} < x_m < \cdots < x_1 < 1}  \prod_{p=1}^m x_p^{(\alpha-1)r_p-1}dx_m \dots dx_1
\\
&=  \tau_n\Big( 1- \frac {K_n} {\tau_n} -\gamma n^{\delta-1}\Big)  \Big( \mathop{\int \cdots \int}_{0 < x_m < \cdots < x_1 < 1}  \prod_{p=1}^m x_p^{(\alpha-1)r_p-1}dx_m \dots dx_1 +o_\mathbb P(1)\Big)
\\
& =\Big(\tau_n - \frac {n} {\gamma} \Big) i_{r_1(\alpha-1), \dots, r_m(\alpha-1)}(0)  + o_\mathbb P(n^{\delta-1+1/\alpha}) .
\end{align*}

Plugging this in \eqref{eq:L2_1}, using Lemma \ref{lm:technical}, Remark \ref{rm:remark} and recalling that $\delta<1$ we get the claim by taking $\varepsilon$ in the estimates from Lemma \ref{lm:DKW} small enough.
\end{proof}

Let $V_1, V_2, \dots$ be the sequence of i.i.d. random variables from the coupling in Section \ref{Preparations}. 
\begin{lemma}\label{lm:L1} 
We have for $\delta$ as in \eqref{def:Kn}
\begin{align*}
 L^{(1)}(r_1, \dots, r_m)&= n^{2-\alpha} \gamma^{-(m+1)} \int_{\gamma n^{\delta-1}}^1 dy_{m+1}
    \hspace{-1.5em} \mathop{\int \cdots \int}_{y_{m+1} < y_m < \cdots < y_1 < 1} \hspace{-1.5em}
    d y_m \cdots d y_1 \: \prod_{p=1}^m y_p^{(\alpha-1) r_p - 1} 
\\
& \quad + n^{1-\alpha+1/\alpha} \sum_{j=1}^{K_n} \frac {V_j -\gamma}{n^{1/\alpha}}  \cdot  F_{\alpha, m,r_1, \dots, r_m}\Big(1-\gamma \frac j n\Big) 
\\
& \quad + o_\mathbb P (n^{1-\alpha +1/\alpha}),\notag
\end{align*} 
where for $x\in [0,1]$ 
\begin{align}\label{eq:F}
& F_{\alpha, m,r_1, \dots, r_m}(x)=C_0\big((r_1,\dots,r_m)\big)+\sum_{k=1}^m C_k\big((r_1,\dots,r_m)\big) x^{(\alpha-1)(r_{k}+\cdots+r_m)}
\end{align}
with $ C_0\big((r_1,\dots,r_m)\big)= (\alpha-1)^{2} \prod_{j=1}^{m} \frac 1{\widehat r_{j-1}-1} $
and $C_k\big((r_1,\dots,r_m)\big)$ as in \eqref{C_k} for $1\leq k\leq m$.
\end{lemma}

\begin{proof}
We use the representations in Lemma \ref{lm:DKW_partB}. Recall the definition of $L^{(1)}(r_1, \dots, r_m)$ in \eqref{eq:ell_1}. It holds (since $\frac{n^{1/\alpha} \mathcal S_{k/n}^{(n)} }{n-\gamma k} \cdot \sum_{j=1}^k \frac{\Delta_j - \gamma }{n-\gamma j}= O_{\mathbb P}\Big(\frac {n^{2/\alpha}} {(n-\gamma k)^2}\Big))$
\begin{align}
  \label{eq:L1_0a}
L^{(1)}&(r_1, \dots, r_m) = \sum_{k=1}^{K_n} X_k^{1-\al} \Pi_0^{k} \sum_{1\leq l_1<\dots<l_m\leq k} \prod_{p=1}^m\Big(\frac 1 {X_{l_p}} \Big( \Pi_0^{l_p}\Big) ^{r_p} \Big)  \\
& = \sum_{k=1}^{K_n} (n-\gamma k)^{1-\alpha} \Big( 1+ (\alpha-1) \frac{ n^{1/\alpha} \mathcal S^{(n)}_{k/n}}{n-\gamma k}
  + O_{\mathbb P}\Big(\frac{n^{2/\alpha}}{(n-\gamma k)^{2} }\Big) \Big
) \notag \\
& \hspace{1cm} 	\cdot \Big(\frac n {n-\gamma k}\Big)^{1 - \alpha}\Big(1-(\alpha -1)\frac{n^{1/\alpha} \mathcal S_{k/n}^{(n)} }{n-\gamma k} + (\alpha -1)\sum_{j=1}^k \frac{\Delta_j - \gamma }{n-\gamma j}+ O_{\mathbb P}\Big(\frac {n^{2/\alpha}} {(n-\gamma k)^2}\Big) \Big) \notag \\
& \hspace{1cm} \cdot  \sum_{1\leq l_1<\dots<l_m\leq k} \prod_{p=1}^m \Big(\frac 1 {n-\gamma l_p } \Big(1+ \frac{ n^{1/\alpha} \mathcal S^{(n)}_{l_p/n}}{n-\gamma l_p}
  + O_{\mathbb P}\Big(\frac{n^{2/\alpha}}{(n-\gamma l_p)^{2} }\Big)\Big)\notag \\
	& \hspace{4cm} \cdot \Big(\frac n {n-\gamma l_p}\Big)^{r_p(1 - \alpha)}\Big(1-r_p(\alpha -1)\frac{n^{1/\alpha} \mathcal S_{l_p/n}^{(n)} }{n-\gamma l_p} 	\notag 
	\\& \hspace{6.5cm}
	+ r_p(\alpha -1) \sum_{j=1}^{l_p} \frac{\Delta_j - \gamma }{n-\gamma j}
+ O_{\mathbb P}\Big(\frac {n^{2/\alpha}} {(n-\gamma l_p)^2}\Big)\Big) \Big).\notag
	\end{align}
Further (recall that  $n^{1/\alpha}\mathcal S_{l_p/n}^{(n)} = \sum_{j=1}^{l_p} (\Delta_j - \gamma )$)
\begin{align}\label{eq:L1_0}
& L^{(1)}(r_1, \dots, r_m) \notag \\
& = n^{1-\alpha}\sum_{k=1}^{K_n} \Big( 1+ (\alpha-1)\sum_{j=1}^k \frac{\Delta_j - \gamma }{n-\gamma j} 
  + O_{\mathbb P}\Big(\frac{n^{2/\alpha}}{(n-\gamma k)^{2} }\Big) \Big
) \notag  \\
& \hspace{1.2cm} \cdot \sum_{1\leq l_1<\dots<l_m\leq k} \prod_{p=1}^m \frac 1 n\Big(\frac{n-\gamma l_p} n\Big)^{r_p(\alpha-1)-1} \Big(1 + \sum_{j=1}^{l_p} (\Delta_j - \gamma)\Big(\frac{r_p(\alpha-1) }{n-\gamma j} -\frac{r_p(\alpha-1) -1}{n-\gamma l_p}\Big) \notag \\
& \hspace{8.5cm}
+ O_{\mathbb P}\Big(\frac{n^{2/\alpha}}{(n-\gamma l_p)^{2} }\Big)\Big).
	\end{align}
Denoting 
\begin{align}
&G_k:= (\alpha-1)\sum_{j=1}^k \frac{\Delta_j - \gamma }{n-\gamma j}  \label{eq:G_k},
\\
&H_q=H_q(l_q,r_q):= \sum_{j=1}^{l_q} (\Delta_j - \gamma)\Big(\frac{r_q(\alpha-1) }{n-\gamma j} -\frac{r_q(\alpha-1) -1}{n-\gamma l_q}\Big) \label{eq:H_q}.
\end{align}	
and then expanding the product over $p$ in \eqref{eq:L1_0} we obtain (using $l_p\leq k$)
\begin{align*}
 L^{(1)}(r_1, \dots, r_m)
& = n^{1-\alpha}\sum_{k=1}^{K_n} \Big( 1+ G_k
  + O_{\mathbb P}\Big(\frac{n^{2/\alpha}}{(n-\gamma k)^{2} }\Big) \Big
) \notag  \\
& \hspace{1.2cm} \cdot \sum_{1\leq l_1<\dots<l_m\leq k} \Big(1+ \sum_{q=1}^m H_{q}
+ O_{\mathbb P}\Big(\frac{n^{2/\alpha}}{(n-\gamma k)^{2} }\Big)\Big)\cdot \prod_{p=1}^m \frac 1 n\Big(1-\gamma \frac{l_p} n\Big)^{r_p(\alpha-1)-1}
	\end{align*}
since by \eqref{eq:S_n} and Lemma \ref{lm:S_n}
\begin{align}
H_q= O_\mathbb P\Big( \frac {n^{1/\alpha}}{n-\gamma l_q} \Big). \label{eq:order_H_q}
\end{align}
Furthermore since $G_k= O_\mathbb P\Big( \frac {n^{1/\alpha}}{n-\gamma k} \Big)$
\begin{align*}
 L^{(1)}(r_1, \dots, r_m)
& = n^{1-\alpha}\sum_{k=1}^{K_n}\sum_{1\leq l_1<\dots<l_m\leq k} \prod_{p=1}^m \frac 1 n\Big(1-\gamma \frac{l_p} n \Big)^{r_p(\alpha-1)-1} \notag
\\ 
& \qquad + 
n^{1-\alpha}\sum_{k=1}^{K_n} G_k \sum_{1\leq l_1<\dots<l_m\leq k} \prod_{p=1}^m \frac 1 n\Big(1-\gamma \frac{l_p} n \Big)^{r_p(\alpha-1)-1}
\notag \\
& \qquad + n^{1-\alpha}\sum_{k=1}^{K_n}\sum_{1\leq l_1<\dots<l_m\leq k} \Big(\prod_{p=1}^m \frac 1 n\Big(1-\gamma \frac{l_p} n \Big)^{r_p(\alpha-1)-1} \sum_{q=1}^m H_{q} \Big)\notag
\\
& \qquad  + O_{\mathbb P}\Big(n^{1-\alpha}\sum_{k=1}^{K_n} \frac{n^{2/\alpha}}{(n-\gamma k)^{2} } \cdot \sum_{1\leq l_1<\dots<l_m\leq k} \prod_{p=1}^m \frac 1 n\Big(1-\gamma \frac{l_p} n\Big)^{r_p(\alpha-1)-1} \Big).
	\end{align*}
 Recall \eqref{def:Kn}. We obtain that
\begin{align*}
n^{1-\alpha}&\sum_{k=1}^{K_n} \frac{n^{2/\alpha}}{(n-\gamma k)^{2} } \cdot \sum_{1\leq l_1<\dots<l_m\leq k} \prod_{p=1}^m \frac 1 n\Big(1-\gamma \frac{l_p} n\Big)^{r_p(\alpha-1)-1}  \notag \\
&= O_{\mathbb P}\Big( n^{1-\alpha+2/\alpha}\sum_{k=1}^{K_n} \frac{1}{(n-\gamma k)^{2} } \cdot \prod_{p=1}^m  \sum_{l=0}^{k-1}   \frac 1 {n-\gamma l} \Big)\notag \\
&= O_{\mathbb P}\Big( n^{1-\alpha+2/\alpha}(n-\gamma K_n)^{-1}(\log n)^m\Big)
\notag \\
&= o_\mathbb P(n^{1-\alpha+1/\alpha})
	\end{align*}
and thus
\begin{align}\label{eq:L1_1c}
 L^{(1)}(r_1, \dots, r_m) & = n^{1-\alpha}\sum_{k=1}^{K_n}\sum_{1\leq l_1<\dots<l_m\leq k} \prod_{p=1}^m \frac 1 n\Big(1-\gamma \frac{l_p} n \Big)^{r_p(\alpha-1)-1} \notag
\\ 
& \qquad + 
n^{1-\alpha}\sum_{k=1}^{K_n} G_k \sum_{1\leq l_1<\dots<l_m\leq k} \prod_{p=1}^m \frac 1 n\Big(1-\gamma \frac{l_p} n \Big)^{r_p(\alpha-1)-1}
\notag \\
& \qquad + n^{1-\alpha}\sum_{q=1}^m \sum_{k=1}^{K_n}\sum_{1\leq l_1<\dots<l_m\leq k} H_{q} \prod_{p=1}^m \frac 1 n\Big(1-\gamma \frac{l_p} n \Big)^{r_p(\alpha-1)-1}\notag
\\
& \qquad + o_\mathbb P(n^{1-\alpha+1/\alpha}),  
	\end{align}

For the first term on the right-hand side it holds (replacing the multiple sum by an integral)
\begin{align}\label{eq:L1_lead0}
n^{1-\alpha}&\sum_{k=1}^{K_n}\sum_{1\leq l_1<\dots<l_m\leq k} \prod_{p=1}^m \frac 1 n\Big(1-\gamma \frac{l_p} n \Big)^{r_p(\alpha-1)-1} \notag
\\
&= n^{2-\alpha} \sum_{k=1}^{K_n}\frac 1 n\sum_{1\leq l_1<\dots<l_m\leq k}\frac 1 {n^m}  \prod_{p=1}^m \Big(1- \gamma\frac{ l_p} n\Big)^{r_p(\alpha-1)-1}
\notag \\
& = n^{2-\alpha}\Big(\int_0^{1/\gamma- n^{\delta-1}} d x_{m+1}
   \mathop{\int \cdots \int}_{0 < x_1 < \cdots < x_m < x_{m+1}} 
    d x_m \cdots d x_1 \: \prod_{p=1}^m (1- \gamma x_p)^{(\alpha-1) r_p - 1} + O( n^{1-\alpha}) \Big)\notag \\
  & = n^{2-\alpha} \gamma^{-(m+1)} \int_{\gamma n^{\delta-1}}^1 dy_{m+1}
    \hspace{-1.5em} \mathop{\int \cdots \int}_{y_{m+1} < y_m < \cdots < y_1 < 1} \hspace{-1.5em}
    d y_m \cdots d y_1 \: \prod_{p=1}^m y_p^{(\alpha-1) r_p - 1} + o\Big(n^{1-\alpha+1/\alpha}\Big) 	
	\end{align}
where we substituted $y_p=1-\gamma x_p$, $p=1,\dots,m+1$. 

We now deal with the second term on the right hand side of \eqref{eq:L1_1c}. Recall \eqref{eq:G_k}. Interchanging the order of summation and replacing sums by integrals we have that
\begin{align*}
n^{1-\alpha} &\sum_{k=1}^{K_n}G_k \sum_{1 \le l_1 < \cdots < l_m \le k}\prod_{p=1}^m \frac1n \Big(1- \gamma \frac{l_p}n\Big)^{r_p(\alpha-1)-1}\\&= (\alpha-1)n^{1-\alpha} \sum_{j=1}^{K_n}\frac{\Delta_j - \gamma}{n-\gamma j} \sum_{k=j}^{K_n}\sum_{1 \le l_1 < \cdots < l_m \le k}\prod_{p=1}^m \frac1n \Big(1- \gamma \frac{l_p}n\Big)^{r_p(\alpha-1)-1} \notag
\\
&=\gamma^{-m-1}n^{1-\alpha} \sum_{j=1}^{K_n}\frac{\Delta_j - \gamma}{n-\gamma j}\sum_{k=j}^{K_n} \Big( \idotsint\limits_{1-\gamma k/n\le y_{m} < \cdots < y_1 \le 1} \prod_{p=1}^m y_p^{(\alpha-1)r_p-1} \, dy_1 \ldots dy_{m} +O(n^{1-\alpha})\Big) 
 \notag
\\
&= \gamma^{-m-1} n^{1-\alpha}  \sum_{j=1}^{K_n}\frac{\Delta_j - \gamma}{n-\gamma j} \frac n\gamma \int_0^{1-\gamma j/n} dy_{m+1} \Big(\idotsint\limits_{ y_{m+1} < \cdots < y_1 \le 1} \prod_{p=1}^m y_p^{(\alpha-1)r_p-1} \, dy_1 \ldots dy_{m} +O(n^{1-\alpha}+n^{\delta-1})\Big) \notag
 \\
&=\gamma^{-m-2}n^{1-\alpha}  \sum_{j=1}^{K_n}\frac{\Delta_j - \gamma}{1-\gamma j/n}\Big(\idotsint\limits_{0\le y_{m+1} < \cdots < y_1 \le 1} \prod_{p=1}^m y_p^{(\alpha-1)r_p-1} \, dy_1 \ldots dy_{m}dy_{m+1} \notag
\\
& \hspace{4.5cm} -\idotsint\limits_{1-\gamma j/n\le y_{m+1} < \cdots < y_1 \le 1} \prod_{p=1}^m y_p^{(\alpha-1)r_p-1} \, dy_1 \ldots dy_{m+1} +O(n^{1-\alpha}+n^{\delta-1})\Big). \notag
\end{align*}
Furthermore
\begin{align}\label{L1_sec_term}
n^{1-\alpha} &\sum_{k=1}^{K_n}G_k \sum_{1 \le l_1 < \cdots < l_m \le k}\prod_{p=1}^m \frac1n \Big(1- \gamma \frac{l_p}n\Big)^{r_p(\alpha-1)-1} \notag
\\
&= n^{1-\alpha+1/\alpha} \sum_{j=1}^{K_n}\frac {\Delta_j - \gamma}{n^{1/\alpha}} \cdot \frac {i_{r_1(\alpha-1), \dots, r_m(\alpha-1), 1}(0)-i_{r_1(\alpha-1), \dots, r_m(\alpha-1), 1}\Big(1-\gamma \frac j n\Big)}{\gamma^{m+2}(1-\gamma \frac j n)} \notag
\\
& \hspace{3cm} + o_{\mathbb P} (n^{1-\alpha+1/\alpha} )\notag
\\
&= n^{1-\alpha+1/\alpha} \frac 1 {\gamma^{m+2} }\sum_{j=1}^{K_n}\frac {\Delta_j - \gamma}{n^{1/\alpha}}  \cdot \Big(d_0+\sum_{i=1}^m d_{i} \Big(1-\gamma \frac j n\Big)^{(\alpha-1)(r_{m-i+1}+\cdots+r_m)}\Big) \notag
\\
& \hspace{1cm} + o_{\mathbb P} (n^{1-\alpha+1/\alpha}).
\end{align}
where the coefficients $d_i=d_i(r_1, \dots, r_m)$, $0\leq i\leq m$, are obtained from Lemma \ref{lm:technical} by setting
\begin{align}\label{def:di_new}
d_i:=-a_{i+1}\big(r_1(\alpha-1),\dots,r_m(\alpha-1),1\big).
\end{align}
Let for $0\leq h_2\leq h_1 \leq m$ (putting the void product always equal to 1) 
\begin{align}\label{P}
P_{h_1,h_2}= \prod_{j=1}^{h_1-h_2} \frac 1{r_j + \cdots + r_{h_1-h_2}} \prod_{k=h_1-h_2+1}^{h_1} \frac 1{r_{h_1-h_2+1}+ \cdots + r_k}.
\end{align}
Then 
\begin{align}\label{di}
d_i = \frac{(-1)^{i} }{(\alpha-1)^{m+1}}\frac 1{r_{m-i+1}+ \cdots + r_m+\frac 1 {\alpha-1}} P_{m,i}, \qquad \text{ for } 0\leq i\leq m.
\end{align}

We now deal with the third term on the right hand side of \eqref{eq:L1_1c}. Let
\begin{equation}\label{eq:b}
b_{n,q}(j,l_q) := \frac{r_q(\alpha-1) }{n-\gamma j} -\frac{r_q(\alpha-1) -1}{n-\gamma l_q}
\end{equation}
for $1\leq q\leq m$ and $1\leq j\leq l_q$.
It holds by interchanging the order of summation and replacing sums by integrals that
\begin{align*}
&n^{1-\alpha} \sum_{k=1}^{K_n} \sum_{1 \le l_1 < \cdots < l_m \le k}\prod_{p=1}^m \frac1n \Big(1- \gamma \frac{l_p}n\Big)^{r_p(\alpha-1)-1} H_q\\
&=n^{1-\alpha}   \sum_{j=1}^{K_n}(\Delta_j-\gamma) \sum_{l_q=j}^{K_n} b_{n,q}(j,l_q) \sum_{k=l_q}^{K_n-1}\Big(\sum_{1 \le l_1 < \cdots < l_q}\prod_{p=1}^{q-1} \frac1n \Big(1- \gamma \frac{l_p}n\Big)^{r_p(\alpha-1)-1}\\& \hspace{6cm}\cdot \sum_{ l_q < \cdots < l_m \le k}\prod_{p=q}^{m} \frac1n \Big(1- \gamma \frac{l_p}n\Big)^{r_p(\alpha-1)-1}\Big)
\\
&= \gamma^{-m+1} n^{1-\alpha}\sum_{j=1}^{K_n}(\Delta_j-\gamma) \sum_{l_q=j}^{K_n}\Big( b_{n,q}(j,l_q) \idotsint\limits_{1-\gamma l_q/n < y_{q-1} < \cdots < y_1 \le 1} \prod_{p=1}^{q-1} y_p^{r_p(\alpha-1)-1} dy_1\ldots dy_{q-1}
\\ 
& \hspace{1cm}\cdot\frac1n  \Big(1- \gamma \frac{l_q}n\Big)^{r_q(\alpha-1)-1}
\\
& \hspace{1.2cm}\cdot \sum_{k=l_q}^{K_n}\,\,\idotsint\limits_{1-\gamma k/n < y_{m} < \cdots < y_{q+1} \le 1-\gamma l_q/n} \prod_{p=q+1}^{m} y_p^{r_p(\alpha-1)-1} dy_{q+1}\ldots dy_{m} +O(n^{-1})\Big)
\\ 
&= \gamma^{-m}n^{1-\alpha}\sum_{j=1}^{K_n}(\Delta_j-\gamma) \sum_{l_q=j}^{K_n} \Big( b_{n,q}(j,l_q) \idotsint\limits_{1-\gamma l_q/n < y_{q-1} < \cdots < y_1 \le 1} \prod_{p=1}^{q-1} y_p^{r_p(\alpha-1)-1} dy_1\ldots dy_{q-1}
\\ 
& \hspace{0.1cm}\cdot  \Big(1- \gamma \frac{l_q}n\Big)^{r_q(\alpha-1)-1}\idotsint\limits_{0 < y_{m+1} < \cdots < y_{q+1} \le 1-\gamma l_q/n} \prod_{p=q+1}^{m} y_p^{r_p(\alpha-1)-1} dy_{q+1}\ldots dy_{m+1}+O(n^{\delta-1})\Big).
\end{align*}
Observe for the second multiple integral on the right hand side that by first integrating with respect to $y_{m+1}$ and iterating we obtain that
\begin{align*}
\idotsint\limits_{0 < y_{m+1} < \cdots < y_{q+1} \le 1-\gamma l_q/n} & \prod_{p=q+1}^{m} y_p^{r_p(\alpha-1)-1} dy_{q+1}\ldots dy_{m+1}
 \\
& =\frac { \Big(1- \gamma \frac{l_q}n\Big)^{(r_{q+1}+\cdots +r_m)(\alpha-1)+1}} {(1+r_m(\alpha-1))\cdots (1+(r_m+\cdots r_{q+1})(\alpha-1))} 
\end{align*}
Therefore
\begin{align}\label{eq:L1_third_term0}
&n^{1-\alpha} \sum_{k=1}^{K_n} \sum_{1 \le l_1 < \cdots < l_m \le k}\prod_{p=1}^m \frac1n \Big(1- \gamma \frac{l_p}n\Big)^{r_p(\alpha-1)-1} H_q\notag
\\
&= \gamma^{-m} n^{1-\alpha}\sum_{j=1}^{K_n}(\Delta_j-\gamma) \sum_{l_q=j}^{K_n} \Big(b_{n,q}(j,l_q) \idotsint\limits_{1-\gamma l_q/n < y_{q-1} < \cdots < y_1 \le 1} \prod_{p=1}^{q-1} y_p^{r_p(\alpha-1)-1} dy_1\ldots y_{q-1}\notag
\\
&\hspace{1.5cm}\cdot  \Big(1- \gamma \frac{l_q}n\Big)^{(r_q+\cdots +r_m)(\alpha-1)} \notag
\\
&\hspace{1.7cm}\cdot
\frac 1 {(1+r_m(\alpha-1))\cdots (1+(r_m+\cdots r_{q+1})(\alpha-1))} +O(n^{\delta-1})\Big)\notag
\\
&= n^{1-\alpha} \frac {(\alpha-1)^m} {(1+r_m(\alpha-1))\cdots (1+(r_m+\cdots +r_{q+1})(\alpha-1))} \notag
\\
&\hspace{1.5cm} \cdot \sum_{j=1}^{K_n}(\Delta_j-\gamma)\Big( r_q\gamma ^{-1} \frac{1}{1-\gamma \frac j n} \sum_{l_q=j}^{K_n} \frac 1 n \Big(1- \gamma \frac{l_q}n\Big)^{(r_q+\cdots +r_m)(\alpha-1)} \notag
\\
&\hspace{4.2cm}\cdot
\idotsint\limits_{1-\gamma \frac{ l_q} n < y_{q-1} < \cdots < y_1 \le 1} \prod_{p=1}^{q-1} y_p^{r_p(\alpha-1)-1} dy_1\ldots dy_{q-1}\notag
\\
& \hspace{3.8cm} - (r_q\gamma ^{-1}-1) \sum_{l_q=j}^{K_n}\frac 1 n\Big(1- \gamma \frac{l_q}n\Big)^{(r_q+\cdots +r_m)(\alpha-1)-1} \notag
\\
&\hspace{4.2cm}\cdot
\idotsint\limits_{1-\gamma \frac{ l_q} n < y_{q-1} < \cdots < y_1 \le 1} \prod_{p=1}^{q-1} y_p^{r_p(\alpha-1)-1} dy_1\ldots dy_{q-1} \Big)\notag
\\
& \quad + o_{\mathbb P} (n^{1-\alpha+1/\alpha}).
\end{align}
Replacing the sums over $l_q$ by integrals gives 
\begin{align}\label{eq:H1}
& \frac{1}{1-\gamma \frac j n}\sum_{l_q=j}^{K_n} \frac 1 n \Big(1- \gamma \frac{l_q}n\Big)^{(r_q+\cdots +r_m)(\alpha-1)}\idotsint\limits_{1-\gamma l_q/n < y_{q-1} < \cdots < y_1 \le 1} \prod_{p=1}^{q-1} y_p^{r_p(\alpha-1)-1} dy_1\ldots dy_{q-1} \notag
\\
&= \gamma^{-1}\frac{1}{1-\gamma \frac j n} \int_0^{1-\gamma j/n}y_q^{(\alpha-1)(r_q+\cdots + r_m)} \idotsint\limits_{y_q < y_{q-1} < \cdots < y_1 \le 1}  \prod_{p=1}^{q-1} y_p^{r_p(\alpha-1)-1} dy_1\ldots dy_{q}  + O(n^{\delta-1}) \notag
\\
&=  \gamma^{-1} \frac{1}{1-\gamma \frac j n}\,\Big(i_{r_1(\alpha-1), \dots, r_{q-1}(\alpha-1), (r_q+\cdots+r_m)(\alpha-1)+1}(0)  \notag
\\
&\hspace{2.7cm}-i_{r_1(\alpha-1), \dots, r_{q-1}(\alpha-1), (r_q+\cdots+r_m)(\alpha-1)+1}\Big(1-\gamma \frac {j} n\Big) \Big)+ O(n^{\delta-1})  \notag
\\
&=  \gamma^{-1} \sum_{i=1}^q e_{q,i} \Big(1-\gamma \frac j n\Big)^{(\alpha-1)(r_{q-i+1}+\cdots+r_m)} + O(n^{\delta-1}),
\end{align}
where the coefficients $e_{q,i}=e_{q,i}(r_1, \dots, r_m)$, $1\leq i\leq q$, are obtained from Lemma \ref{lm:technical} by setting
\begin{align}\label{def:ei_new}
e_{q,i}:=-a_{i}\big(r_1(\alpha-1),\dots,r_{q-1}(\alpha-1),(r_{q}+\cdots+r_m)(\alpha-1)+1\big).
\end{align}
It holds for $1\leq i\leq q \leq m$ that
\begin{align}\label{ei}
&e_{q,i}= \frac{(-1)^{i+1} }{(\alpha-1)^{q}}\frac {r_{q-i+1}+ \cdots + r_q} {r_{1}+ \cdots + r_m +\frac 1 {\alpha-1}}P_{q,i}
\end{align}
with $P_{q,i}$ from \eqref{P}.

Similarly
\begin{align}\label{eq:H2}
& \sum_{l_q=j}^{K_n} \frac 1 n \Big(1- \gamma \frac{l_q}n\Big)^{(r_q+\cdots +r_m)(\alpha-1)-1}\idotsint\limits_{1-\gamma l_q/n < y_{q-1} < \cdots < y_1 \le 1} \prod_{p=1}^{q-1} y_p^{r_p(\alpha-1)-1} dy_1\ldots dy_{q-1} \notag
\\
&= \gamma^{-1}\, \Big(i_{r_1(\alpha-1), \dots, r_{q-1}(\alpha-1), (r_q+\cdots+r_m)(\alpha-1)}(0)- i_{r_1(\alpha-1), \dots, r_{q-1}(\alpha-1), (r_q+\cdots+r_m)(\alpha-1)}\Big(1-\gamma \frac {j} n\Big)\Big) \notag
\\
&\hspace{1cm}+ O(n^{\delta-1})  \notag
\\
&=\gamma^{-1} \sum_{i=1}^q f_{q,i} \Big(1-\gamma \frac j n\Big)^{(\alpha-1)(r_{q-i+1}+\cdots+r_m)} + O(n^{\delta-1}),
\end{align}
where the coefficients $f_{q,i}=f_{q,i}(r_1, \dots, r_m)$, $1\leq i\leq q$, are obtained from Lemma \ref{lm:technical} by setting
\begin{align}\label{def:fi_new}
f_{q,i}:= -a_{i}\big(r_1(\alpha-1),\dots,r_{q-1}(\alpha-1),(r_{q}+\cdots+r_m)(\alpha-1)\big).
\end{align}
It holds for $1\leq i\leq q \leq m$ that
\begin{align}\label{fi}
& f_{q,i}= e_{q,i} \frac {r_{1}+ \cdots + r_m +\frac 1 {\alpha-1}}{r_{1}+ \cdots + r_m}
\end{align}
with $e_{q,i}$ from \eqref{ei}.
Plugging \eqref{eq:H1} and \eqref{eq:H2} in \eqref{eq:L1_third_term0}
\begin{align*}
&n^{1-\alpha} \sum_{k=1}^{K_n} \sum_{1 \le l_1 < \cdots < l_m \le k} H_q\prod_{p=1}^m \frac1n \Big(1- \gamma \frac{l_p}n\Big)^{r_p(\alpha-1)-1}\notag
\\
&= n^{1-\alpha+1/\alpha} \cdot \frac {(\alpha-1)^{m+1}} {(1+r_m(\alpha-1))\cdots (1+(r_m+\cdots +r_{q+1})(\alpha-1))} \notag
\\
& \hspace{1.5cm} \cdot \sum_{j=1}^{K_n}\frac{\Delta_j-\gamma} {n^{1/\alpha} } \cdot \sum_{i=1}^q \Big(1-\gamma \frac j n\Big)^{(\alpha-1)(r_{q-i+1}+\cdots+r_m)} \Big(r_q(\alpha-1)( e_{q,i} -f_{q,i} ) + f_{q,i}\Big) \notag
\\
& \quad + o_{\mathbb P} (n^{1-\alpha+1/\alpha}).
\end{align*}
Plugging this together with \eqref{eq:L1_lead0} and \eqref{L1_sec_term} in \eqref{eq:L1_1c} we get that
\begin{align}\label{eq:L1_almostdone}
 L^{(1)}(r_1, \dots, r_m)&= n^{2-\alpha} \gamma^{-(m+1)} \int_{\gamma n^{\delta-1}}^1 dy_{m+1}
    \hspace{-1.5em} \mathop{\int \cdots \int}_{y_{m+1} < y_m < \cdots < y_1 < 1} \hspace{-1.5em}
    d y_m \cdots d y_1 \: \prod_{p=1}^m y_p^{(\alpha-1) r_p - 1} \notag
\\
& \quad + n^{1-\alpha+1/\alpha} \sum_{j=1}^{K_n} \frac {\Delta_j -\gamma}{n^{1/\alpha}} \cdot  F_{\alpha, m,r_1, \dots, r_m}\Big(1-\gamma \frac j n\Big) \notag
\\
& \quad + o_\mathbb P (n^{1-\alpha +1/\alpha}),
\end{align}
with 
\begin{align*}
F_{\alpha, m,r_1, \dots, r_m}(x)&=  (\alpha-1)^{m+2} \cdot \Big(d_0+\sum_{i=1}^m d_{i} x^{(\alpha-1)(r_{m-i+1}+\cdots+r_m)}\Big) \\
& \qquad + \sum_{q=1}^m \frac {(\alpha-1)^{m+1}} {(1+r_m(\alpha-1))\cdots (1+(r_m+\cdots +r_{q+1})(\alpha-1))} \notag
\\
& \hspace{1.5cm} \cdot  \sum_{i=1}^q x^{(\alpha-1)(r_{q-i+1}+\cdots+r_m)} \Big(r_q(\alpha-1)( e_{q,i} -f_{q,i} ) + f_{q,i}\Big).
\end{align*}

Interchanging the order of summation in the double sum 
\begin{align*}
& F_{\alpha, m,r_1, \dots, r_m}(x)
\\
& = (\alpha-1)^{m+2} \cdot \Big(d_0+\sum_{i=1}^m d_{i} x^{(\alpha-1)(r_{m-i+1}+\cdots+r_m)}\Big) \\
& \qquad + \sum_{k=1}^{m} x^{(\alpha-1)(r_{k}+\cdots+r_m)} \sum_{q=k}^m \frac {(\alpha-1)^{q+1}} {\prod_{h=q+1}^{m}(r_m+\cdots +r_{h}+\frac {1}{\alpha-1})}
\\
&\hspace{6.2cm} \cdot \Big(r_q(\alpha-1)( e_{q,q-k+1} -f_{q,q-k+1} ) + f_{q,q-k+1}\Big)
\\
&=  (\alpha-1)^{m+2} \Big(d_0+\sum_{k=1}^m x^{(\alpha-1)(r_{k}+\cdots+r_m)}\Big( d_{m-k+1} + \sum_{q=k}^m \frac {(\alpha-1)^{q-m-1}} {\prod_{j=q+1}^{m}(r_m+\cdots +r_{j}+\frac {1}{\alpha-1})} 
\\
&\hspace{6.3cm} \cdot \Big(r_q(\alpha-1)( e_{q,q-k+1} -f_{q,q-k+1} ) + f_{q,q-k+1}\Big)\Big)\Big).
\end{align*}
Now note by \eqref{fi} that
\begin{align*}
r_q(\alpha-1)( e_{q,q-k+1} -f_{q,q-k+1} ) + f_{q,q-k+1} = e_{q,q-k+1} \frac {r_1 +\cdots r_{q-1}+r_{q+1}\cdots +r_m + \frac 1 {\alpha-1}}{r_1 +\cdots +r_m}
\end{align*} 
By \eqref{di} we further obtain that
\begin{align*}
F_{\alpha, m,r_1, \dots, r_m}(x)&=  (\alpha-1)^{2} \prod_{j=1}^{m} \frac 1 {r_j+\cdots+r_m}
\\
& \quad +\sum_{k=1}^m x^{(\alpha-1)(r_{k}+\cdots+r_m)}\cdot \frac {\alpha-1 }{r_{j}+\cdots +r_m+\frac 1 {\alpha-1}} \cdot \prod_{j=1}^{k-1} \frac 1 {r_j+\cdots+r_{k-1}} 
\\
&\hspace{1cm}\cdot \Big(\frac {(-1)^{m-k+1}}{\prod_{j=k}^{m} (r_{k}+\cdots +r_j)}
\\
&\hspace{1.5cm} + \frac 1 {r_1+\cdots+r_m}\sum_{q=k}^m (-1)^{q-k} \cdot \frac{r_1 +\cdots r_{q-1}+r_{q+1}\cdots +r_m + \frac 1 {\alpha-1}} {\prod_{j=q+1}^m (r_j+\cdots+r_m+ \frac 1 {\alpha-1} )} 
\\
&\hspace{6.4cm} \cdot  \frac{r_{k} +\cdots  +r_q} {\prod_{j=k}^q (r_j+\cdots+r_k)} \Big),
\end{align*}
which is the form in \eqref{eq:F}.

The last step of the proof is to replace the jumps $\Delta_j$ in the above expression by the coupled random variables $V_1, V_2, \dots$. By \eqref{couplingK12_1} and \eqref{couplingK12_2} the number of uncoupled steps is stochastically bounded by
  $\sum_{j=1}^n B_{j}$ where $B_{1},B_{2},\dots$ are independent, $B_j \sim \mathrm{Ber}(1 \wedge c/j)$ for $c>0$ constant and the probability of a coupling error exceeding $k\geq 1$ is bounded by $ck^{1-\alpha}$. Note that
  \begin{equation}
    \sum_{j=1}^n B_{j} \sim c \log n \quad \text{for } n \to \infty.
  \end{equation}
Thus choosing $c_1 > 1/(\alpha-1)$ we have that 
  \begin{align}\label{eq:coupling}
    & \mathbb{P}\Big( |\Delta_j-V_j|> (\log n)^{c_1} \text{ for some } 0\leq j\leq K_n \, \big|\,
      \text{at most } 2c \log n \text{ uncoupled steps} \Big) \notag \\
    & \hspace{3em} \le C \big( 2c \log n \big) \big( (\log n)^{c_1}\big)^{1-\alpha} \notag
     \\
		& \hspace{3em}  \mathop{\longrightarrow}_{n\to\infty} 0.
  \end{align}
  \smallskip
For such $c_1$ consider the event
\begin{align}
  \label{def:G_n}
 \mathcal G_n = \Big\{ \max_{j=0,1,\dots,K_n} |\Delta_j-V_j| \le (\log n)^{c_1}, \, \text{at most } 2c \log n \text{ uncoupled steps}\Big\}.
\end{align}
Then 
\begin{align}\label{eq:conv_PG_n}
  \mathbb{P}(\mathcal G_n) \to 1
\end{align}
for $n\to\infty$ 
and on $\mathcal G_n$ it holds that
\begin{align*}
  n^{1-\alpha+1/\alpha}  \sum_{j=1}^{K_n} \frac {|\Delta_j-V_j|}{n^{1/\alpha}} \cdot  F_{\alpha, n,r_1, \dots, r_m}\Big(1-\gamma \frac j n\Big) 
  & =  O\Big(n^{1-\alpha+1/\alpha} \sum_{j=1}^{K_n} \frac {|\Delta_j-V_j|}{n^{1/\alpha}} \Big)
  \\
  & = O_\mathbb P\Big(n^{1-\alpha+1/\alpha} 2c\log n \frac {(\log n)^{c_1}}{n^{1/\alpha}} \Big)
  \\
  & =o_\mathbb P\Big(n^{1-\alpha+1/\alpha}\Big),
\end{align*}
which together with \eqref{eq:L1_almostdone} and \eqref{eq:conv_PG_n} gives the claim.
\end{proof}

\section{Proof of Theorem \ref{theorem} and of Corollary \ref{corollary}}\label{sect:Proof}

\begin{proof}[Proof of Theorem \ref{theorem}]

Combining Lemma \ref{lm:L2} and Lemma \ref{lm:L1} we obtain 
\begin{align}\label{eq:L12_final0}
L^{(1)}&(r_1, \dots, r_m) + L^{(2)}(r_1, \dots, r_m) \notag
\\
 &=  n^{2-\alpha}
\gamma^{-(m+1)} \int_{0}^1 dy_{m+1}
    \hspace{-1.5em} \mathop{\int \cdots \int}_{y_{m+1} < y_m < \cdots < y_1 < 1} \hspace{-1.5em}
    d y_m \cdots d y_1 \: \prod_{p=1}^m y_p^{(\alpha-1) r_p - 1} \notag
		\\
		& \qquad + n^{1-\alpha+1/\alpha}  \frac 1 \gamma \prod_{p=1}^m \frac{1}{r_p + r_{p+1} + \cdots + r_m }\mathcal\, S^{(n)}_{1/\gamma} \notag
		\\
		& \qquad + n^{1-\alpha+1/\alpha} \cdot \sum_{j=1}^{K_n} \frac {V_j -\gamma}{n^{1/\alpha}} \cdot  F_{\alpha, m,r_1, \dots, r_m}\Big(1-\gamma\frac j n\Big)
 + o_\mathbb P(n^{1-\alpha+1/\alpha})
\end{align}
with $F_{\alpha, m,r_1, \dots, r_m}$ defined in Lemma \ref{lm:L1}. By Lemma \ref{lm:technical}
\begin{align*}
 \int_{0}^1 dy_{m+1} &
    \hspace{-1.5em} \mathop{\int \cdots \int}_{y_{m+1} < y_m < \cdots < y_1 < 1} \hspace{-1.5em}
    d y_m \cdots d y_1 \: \prod_{p=1}^m y_p^{(\alpha-1) r_p - 1}  
 = \prod_{p=1}^m \frac{1}{(\alpha-1) (r_p + r_{p+1} + \cdots + r_m) + 1} .
\end{align*}
Combining with \eqref{eq:L12_final0} we obtain (recall $\gamma =\frac 1 {\alpha-1}$)
\begin{align}\label{eq:L12_final}
L^{(1)}&(r_1, \dots, r_m) + L^{(2)}(r_1, \dots, r_m) \notag
\\
 &=  n^{2-\alpha}
(\alpha-1) \prod_{p=1}^m \frac{\alpha-1}{(\alpha-1) (r_p + r_{p+1} + \cdots + r_m) + 1} \notag
		\\
		& \qquad + n^{1-\alpha+1/\alpha} \frac 1 \gamma  \prod_{p=1}^m \frac{1}{r_p + r_{p+1} + \cdots + r_m }\mathcal\, S^{(n)}_{1/\gamma} \notag
		\\
		& \qquad + n^{1-\alpha+1/\alpha} \cdot \sum_{j=1}^{K_n} \frac {V_j -\gamma}{n^{1/\alpha}} \cdot  F_{\alpha, m,r_1, \dots, r_m}\Big(1-\gamma\frac j n \Big) 
 + o_\mathbb P(n^{1-\alpha+1/\alpha}).
\end{align}

Plugging this in \eqref{eq:ell_bar_i} and then \eqref{eq:ell_bar_i} in \eqref{eq:ell_bar with L} we obtain
\begin{align}
\bar \ell_r & =\al \Gamma(\al)  \sum_{(r_1, \dots, r_m)} \bigg(\prod_{p=1}^m \widehat r_p \mathbb P( V=r_p) \bigg) \notag
\\
& \hspace{3cm} \cdot \Big(n^{2-\alpha}
(\alpha-1) \prod_{p=1}^m \frac{\alpha-1}{(\alpha-1) (r_p + r_{p+1} + \cdots + r_m) + 1} \notag
		\\
		& \hspace{3.5cm}  + n^{1-\alpha+1/\alpha}  \frac 1 \gamma  \prod_{p=1}^m \frac{1}{r_p + r_{p+1} + \cdots + r_m } \mathcal\, S^{(n)}_{1/\gamma}  \notag
		 \\
		& \hspace{3.5cm} + n^{1-\alpha+1/\alpha} \cdot \sum_{j=1}^{K_n} \frac {V_j -\gamma}{n^{1/\alpha}} \cdot  F_{\alpha, m,r_1, \dots, r_m}\Big(1-\gamma\frac j n\Big)\Big)\notag
\\
& \hspace{1cm}  + o_\mathbb P(n^{1-\alpha+1/\alpha}).
\label{final formula}
\end{align}

Recall the definition of $F_{\alpha, m,r_1, \dots, r_m}$ in Lemma \ref{lm:L1} and the notation therein. We can rewrite 
\begin{align}
F_{\alpha, m,r_1, \dots, r_m}(x)= C_0\big((r_1,\dots,r_m)\big)+\sum_{h=1}^{r-1} x^{(\alpha-1) \cdot h} \cdot \sum_{k=1}^m \delta_{r_k +\cdots+r_m,h} \cdot C_k\big((r_1,\dots,r_m)\big)
\end{align}
and thus \eqref{final formula} becomes
\begin{align}
\bar \ell_r =  &n^{2-\alpha} \cdot \al (\al-1)\Gamma(\al)  \sum_{(r_1, \dots, r_m)} \prod_{p=1}^m \frac{ \widehat r_p \P(V=r_p) \cdot (\alpha-1)}{(\alpha-1) (r_p + r_{p+1} + \cdots + r_m) + 1} \notag
\\
& + n^{1-\alpha+1/\alpha} \cdot \Big(\al \Gamma(\al)  \sum_{(r_1, \dots, r_m)} \Big(\prod_{p=1}^m \Big(\widehat r_p \mathbb P( V=r_p) \Big) \notag
\\
&\hspace{3.2cm} \cdot \Big((\alpha-1)\prod_{p=1}^m \frac{1}{r_p + r_{p+1} + \cdots + r_m } \mathcal\, S^{(n)}_{1/\gamma} + C_0\big((r_1,\dots,r_m)\big)\cdot \sum_{j=1}^{K_n} \frac {V_j -\gamma}{n^{1/\alpha}}\Big)
 \notag
		\\
		& \hspace{2.3cm} + \sum_{h=1}^{r-1}   \al \Gamma(\al)  \sum_{(r_1, \dots, r_m)} \bigg(\prod_{p=1}^m \Big(\widehat r_p \mathbb P( V=r_p)\Big) \cdot \sum_{k=1}^m \Big(\delta_{r_k +\cdots+r_m,h} \cdot C_k \big((r_1,\dots,r_m)\big)\Big) \notag
\\
& \hspace{6cm} \cdot \sum_{j=1}^{K_n} \frac {V_j -\gamma}{n^{1/\alpha}} \Big(1-\gamma\frac j n\Big)^{(\alpha-1) h}\bigg) \notag
\\
&  + o_\mathbb P(n^{1-\alpha+1/\alpha}).
\label{final formula2}
\end{align}
From \eqref{final formula2} we can read off the coefficients of the matrix $R$ given in  \eqref{R_1} and \eqref{R_j}. 

Recall from \eqref{SandVs} that 
\begin{align*}
\mathcal S_{1/\gamma}^{(n)}  = \frac 1{n^{1/\alpha}}\sum_{k=1}^{[n\gamma^{-1}]}(V_k- \gamma) + o_{\mathbb P}(1). 
\end{align*}
Using this and the following lemma together with the Cram\'er-Wold device and \eqref{final formula} gives the claim of the theorem.
\end{proof}

\begin{lemma}\label{lm:limit}
Let $V_1, V_2, \dots$ be i.i.d. random variables in the domain of attraction of a stable law of index $1<\alpha<2$ and having expectation $\gamma$, let $0 < \delta < 1$ and $f : [0,\infty) \to \R$ be a continuous function.
  We have
  \begin{align}
    \label{eq:Sintegrallimit.0}
    & \sup_{n/\gamma - n^\delta < m \le n/\gamma} \Big| \frac{1}{n^{1/\alpha}} \sum_{k=m}^{\lfloor n/\gamma \rfloor}
    f\Big(1-\gamma \frac{k}{n}\Big) (V_k-\gamma)\Big|
      \mathop{\longrightarrow}_{n\to\infty}^d 0. \\
    \intertext{Furthermore,} 
    \label{eq:Sintegrallimit}
    & \sum_{k=0}^{\lfloor n/\gamma - n^\delta \rfloor}
    f\Big(1-\gamma \frac{k}{n}\Big) \frac {V_k -\gamma}{n^{1/\alpha}}
    \mathop{\longrightarrow}_{n\to\infty}^d \int_0^{1/\gamma} f(1-\gamma t) \, dS_t,
  \end{align}
where $S$ is a stable process of index $\alpha$.
\end{lemma}

\begin{proof}
It follows from \cite[Thm.~3.4]{KM86} that
  \begin{align}
    \label{eq:Sintegrallimit.KM}
    & \sum_{k=0}^{\lfloor n/\gamma \rfloor}
    f\Big(1-\gamma \frac{k}{n}\Big) \frac {V_k -\gamma}{n^{1/\alpha}}
    \mathop{\longrightarrow}_{n\to\infty}^d \int_0^{1/\gamma} f(1-\gamma t) \, dS_t.
  \end{align}
  Combining this with \eqref{eq:Sintegrallimit.0} and
  Slutzky's theorem yields \eqref{eq:Sintegrallimit}.
  \smallskip

  For \eqref{eq:Sintegrallimit.0} let
\[ f_n(t):= f\Big(1- \frac \gamma n \Big\lfloor\frac n\gamma\Big\rfloor + \gamma \frac {\lfloor n^\delta \rfloor} n t\Big), \]
$t \ge 0$. Then 
\begin{align*}\Big(\sum_{k=m}^{\lfloor n/\gamma \rfloor}  f\Big(1-  \gamma  \frac kn \Big)(V_k-\gamma) &\Big)_{m=\lfloor n/\gamma\rfloor - \lfloor n^\delta \rfloor, \ldots , \lfloor n/\gamma\rfloor }\\& \stackrel d= \Big(\sum_{k=0}^{m} f_n\Big(\frac k{\lfloor n^\delta\rfloor}\Big)(V_k-\gamma)\Big)_{m=0, \ldots \lfloor n^\delta \rfloor}. 
\end{align*}
Since $f_n(t)\to f(0)$, we may apply  Theorem 3.3 from [KM86] with $\lambda=\lfloor n^\delta \rfloor$ to the right-hand side. In particular the corresponding supremum is of order $O_{\mathbb P}((n^{\delta})^{1/\alpha})=o_{\mathbb P}(n^{1/\alpha})$. This proves formula (3.94).
\end{proof}

\begin{remark}\label{rm:coefficients}
For the coefficient $c_r$ of the leading term $n^{2-\alpha}$ in \eqref{final formula} note that (we use the convention $r_{m+1}=1$)
\begin{align*}
c_r= \al (\al-1)\Gamma(\al)  \sum_{(r_1, \dots, r_m)} \prod_{p=1}^m \frac{ \P(V=r_p) \cdot (\alpha-1)
    \big(r - \sum_{q=1}^p r_{q}\big)}{(\alpha-1) (r_p + r_{p+1} + \cdots + r_m + r_{m+1}) + (2-\alpha)},
\end{align*}
which can be rewritten as
\begin{align*}
c_r&= \frac{\al(\al-1)\Gamma(\al)}{2-\alpha}  \sum_{(r_1, \dots, r_m)} \frac 1 {1+\frac{2-\al}{\al-1}}\P(V=r_m)\notag
\\
& \hspace{4cm}\cdot \prod_{p=2}^m \Big(\frac{  
    r_p+\cdots+r_m+1}{r_p + r_{p+1} + \cdots + r_m + 1 + \frac{2-\al}{\al-1}} \P(V=r_{p-1})\Big)\notag
		\\
	& \hspace{4cm}\cdot \frac{\frac{2-\al}{\al-1}}{r_1+\cdots+r_m+1+ \frac{2-\al}{\al-1}}.
\end{align*}
We can interpret this as $\frac{\al(\al-1)\Gamma(\al)}{2-\alpha}$ times the sum over all path probabilities for the
skeleton chain of a branching process $\xi$ with offspring distribution \eqref{eq:distrib_V} stopped at an independent
exponential time $\tau$ with parameter $\frac{2-\alpha}{\alpha-1}$ to arrive at state $r$ when started in state 1. This is the branching process which appears in \cite[Lemma~30]{BBS07}. 
Thus 
\begin{align*}
c_r&= \frac{\al(\al-1)\Gamma(\al)}{2-\alpha} \, \mathbb P(\xi_\tau=r),
\end{align*}
which is exactly the representation of $c_r$ obtained by Berestycki et.\ al in the proof of their Theorem 9.
\end{remark}

\begin{proof}[Proof of Corollary \ref{corollary}]
Note from \cite[Equation~(33)]{K12} that when $1<\alpha<\frac {1+\sqrt 5} 2$ 
\begin{align}\label{our33}
\frac{ \widetilde c n^{2-\alpha}-L^{(n)}} {n^{1-\alpha +\frac 1 \alpha}}  = \Gamma(\alpha)\alpha(\alpha-1)^\alpha \sum_{j\leq n/\gamma} \Big(\frac j n\Big)^{1-\alpha}\frac{V_j-\gamma}{n^{1/\alpha}}+o_\mathbb P(1).
\end{align}
This combined with 
\cite[Theorem~3.1]{KM88} (note that since $\alpha(1-\alpha)>-1$ for $1<\alpha<\frac {1+\sqrt 5} 2$, condition (A2) therein is fulfilled) 
gives 
\begin{align}\label{last}
\frac{ \widetilde c n^{2-\alpha}-L^{(n)}} {n^{1-\alpha +\frac 1 \alpha}}  \stackrel{d} \longrightarrow  \Gamma(\alpha)\alpha(\alpha-1)^{1+1/\alpha} \int_{0}^{1/\gamma} t^{1-\alpha} d \mathcal S_t. 
\end{align}
To compare literally with \cite[Theorem~3.1]{KM88}  observe also that the upper summation limit in \eqref{our33} is $n/\gamma$ and not $n$. By \cite[Theorem~2.1]{KM88} the integral on the right hand side of \eqref{last} is well-defined when $1<\alpha<\frac {1+\sqrt 5} 2$.

The proof is completed by noting that the representations of $\ell^{(n)}_r$ and $L^{(n)}$ involve the same sequence $V_1, V_2, \dots$ (i.e. the coupling from Section \ref{Preparations} used in the proof of Lemma \ref{lm:L1} is the one from \cite{K12}).

\end{proof}

\begin{remark}[Recovering Theorem~1.1 from \cite{DKW14}]
  \label{rem:DKW14result}
  We restricted the analysis in Section~\ref{Decomposing} to $r\ge 2$, mostly in order to avoid treating special cases
  involving empty sums separately. The approach of Lemmas~\ref{lm:L2} and \ref{lm:L1} can be carried out
  analogously for the case $r=1$, showing that
  \begin{equation}
    \label{eq:barell1rem}
    \al \Gamma(\al) \sum_{k=1}^{\tau_n-1} 
    X_k^{1-\al} \cdot \Pi_{0}^{k}
    = \alpha(\alpha-1) \Gamma(\alpha) n^{2-\alpha} 
    - \alpha(\alpha-1)(2-\alpha)\Gamma(\alpha) \mathcal{S}^{(n)}_{1/\gamma} n^{1-\alpha+1/\alpha}
    + o_{\P}\big( n^{1-\alpha+1/\alpha} \big)
  \end{equation}
  Comparing this with $\bar{\ell}_1$ from \eqref{barell1} confirms the coefficient $R_{1,1}$ from
  \eqref{R11} and in fact provides an alternative proof of \cite[Thm.~1.1]{DKW14}. 
\end{remark}
\begin{proof}[Proof sketch for \eqref{eq:barell1rem}.]
  Analogous to \eqref{eq:ell_bar with L}, decompose the left-hand
  side of \eqref{eq:barell1rem} as $L^{(1)} + L^{(2)}$ with
  \[
    L^{(1)} = \al \Gamma(\al) \sum_{k=1}^{K_n} X_k^{1-\al} \cdot \Pi_{0}^{k}, \quad
    L^{(2)} = \al \Gamma(\al) \sum_{K=K_n+1}^{\tau_n-1} X_k^{1-\al} \cdot \Pi_{0}^{k} .
  \]
  Using Lemma~\ref{lm:DKW_partB} as in \eqref{eq:L1_0a} in the proof of Lemma~\ref{lm:L1}, we have
  (recall $K_n$ from \eqref{def:Kn})
  \begin{align*}
    L^{(1)}
    & = \al \Gamma(\al) \sum_{k=1}^{K_n} (n-\gamma k)^{1-\alpha}
      \Big( 1+ (\alpha-1) \frac{ n^{1/\alpha} \mathcal S^{(n)}_{k/n}}{n-\gamma k}
      + O_{\mathbb P}\Big(\frac{n^{2/\alpha}}{(n-\gamma k)^{2} }\Big) \Big) 
      \notag \\
    & \hspace{3em}
      \cdot \Big(\frac n {n-\gamma k}\Big)^{1 - \alpha}\Big(1-(\alpha -1)\frac{n^{1/\alpha} \mathcal S_{k/n}^{(n)} }{n-\gamma k} + (\alpha -1)\sum_{j=1}^k \frac{\Delta_j - \gamma }{n-\gamma j}+ O_{\mathbb P}\Big(\frac {n^{2/\alpha}} {(n-\gamma k)^2}\Big) \Big) \notag \notag \\
    & = \al \Gamma(\al) n^{1-\alpha} \sum_{k=1}^{K_n}
      \bigg( 1 + (\alpha-1) \sum_{j=1}^k \frac{\Delta_j - \gamma}{n-\gamma j} \bigg) + o_{\P}\big( n^{1-\alpha+1/\alpha} \big) \notag \\
    & = \al (\al-1) \Gamma(\al) n^{2-\alpha} - \al \Gamma(\al) n^{1-\alpha+\delta} \notag \\
    & \hspace{5em} 
      + \alpha (\alpha-1) \Gamma(\alpha) n^{1-\alpha+1/\alpha} \sum_{j=1}^{K_n} \frac{\Delta_j - \gamma}{n^{1/\alpha}}
      \frac{K_n - j}{n-\gamma j}
      + o_{\P}\big( n^{1-\alpha+1/\alpha} \big).
  \end{align*}
  Since $(K_n-j)/(n-\gamma j) = (\alpha-1) \big( 1 - \gamma n^{\delta}/(n-\gamma j)\big)$, this shows that
  \begin{align}
    \label{eq:L(1)r=1}
    L^{(1)}
    & = \al (\al-1) \Gamma(\al) n^{2-\alpha} - \al \Gamma(\al) n^{1-\alpha+\delta}
      + \al (\al-1)^2 \Gamma(\al) n^{1-\alpha+1/\alpha} \mathcal{S}^{(n)}_{1/\gamma} + o_{\P}\big( n^{1-\alpha+1/\alpha} \big).
  \end{align}

  Similarly, using Lemma~\ref{lm:DKW} as in proof of Lemma~\ref{lm:L2}, we see that
  (using also \eqref{eq:tau_n via S_n} in the third equality)
  \begin{align}
    L^{(2)}
    & = \al \Gamma(\al) (\gamma\tau_n)^{1-\alpha}  \sum_{k=K_n}^{\tau_n-1} \Big(1+O_{\mathbb P}\Big((\tau_n-k)^{1/\alpha-1+\varepsilon}\Big)\Big)
      \notag \\
    & = \al \Gamma(\al) (\gamma\tau_n)^{1-\alpha}  \Big( \tau_n - \frac{n}{\gamma} + n^{\delta} \Big)
      \Big(1+ O_{\mathbb P}\Big((\tau_n-n/\gamma + n^{\delta})^{1/\alpha-1+\varepsilon}\Big)\Big)
      \notag \\
    & = \al \Gamma(\al) \Big( n - n^{1/\alpha} \mathcal{S}^{(n)}_{1/\gamma} + o_{\P}\big(n^{1/\alpha}\big) \Big)^{1-\alpha}
      \Big( - \frac{n^{1/\alpha}}{\gamma} \mathcal{S}^{(n)}_{1/\gamma} + n^{\delta} + o_{\P}\big(n^{1/\alpha}\big) \Big)
      + o_{\P}\big( n^{1-\alpha+1/\alpha} \big) \notag \\
    & = \al \Gamma(\al) n^{1-\alpha+\delta} - \al (\al-1) \Gamma(\al) n^{1-\alpha+1/\alpha} \mathcal{S}^{(n)}_{1/\gamma}
      + o_{\P}\big( n^{1-\alpha+1/\alpha} \big)
      \label{eq:L(2)r=1}
  \end{align}
  Adding \eqref{eq:L(1)r=1} and \eqref{eq:L(2)r=1} gives \eqref{eq:barell1rem}
  (note $\al (\al-1)^2 - \al (\al-1) = -\al (\al-1) (2-\al)$).
\end{proof}

\smallskip

\noindent
\textbf{Acknowledgments.}
The authors thank two anonymous referees for their very careful reading of the manuscript and their suggestions which improved the quality of the paper.

\medskip
 The authors were in part supported by the DFG Priority Programme SPP 1590 ``Probabilistic Structures in Evolution'' through projects 221529486 and 221571119 and by the Institute of Mathematics of Gutenberg University Mainz.

\end{document}